\documentclass[11pt,a4paper]{amsart}
\usepackage[utf8]{inputenc}
\usepackage[english]{babel}
\usepackage{amsmath}
\usepackage{amsfonts}
\usepackage{amssymb}
\usepackage{amsthm}
\usepackage{bbm}
\usepackage{float}

\usepackage{pgf}
\usepackage{tikz}
\usetikzlibrary{matrix,arrows.meta}
\usetikzlibrary{arrows,automata}
\usetikzlibrary{positioning}
\tikzset{
     arrow/.style = { very thick, ->, >=Triangle},
     darrow/.style = { very thick, <->, >=Triangle},
     dashow/.style = { very thick, dashed, ->, >=Triangle},
     state/.style={
           rectangle,
           rounded corners,
           draw=black, very thick,
           minimum height=3em,
           inner sep=6pt,
           text centered,
           },
}

\title{Inverting the coupling of the signed Gaussian free field with a loop-soup}
\date{}

\numberwithin{equation}{section}
\newtheorem{theorem}{Theorem}
\newtheorem*{theorem*}{Theorem}
\newtheorem{proposition}{Proposition}[section]
\newtheorem{corollary}[proposition]{Corollary}
\newtheorem{lemma}[proposition]{Lemma}
\newtheorem{remark}[proposition]{Remark}

\newtheorem{definition}{Definition}[section]
\usepackage[a4paper]{geometry}
\geometry{hscale=0.75,vscale=0.75,centering}
\usepackage{hyperref}


\def\ccc{{\mathcal C}}
\def\R{{\mathbb R}}
\def\N{{\mathbb N}}
\def\Z{{\mathbb Z}}
\def\ooo{{\mathcal{O}}}
\def\E{{\mathbb{E}}}
\def\demi{{1\over 2}}
\def\eee{{\mathcal E}}

\def\aaa{\mathcal{A}}
\def\fff{{\mathcal F}}
\def\indic{{{\mathbbm 1}}}

\newcommand{\Pb}{\mathbb{P}}

\author{Titus Lupu \and Christophe Sabot \and Pierre Tarrès}
\address {
CNRS and Laboratoire de Probabilités, Statistique et Modélisation,
Sorbonne-Université,
75005 Paris,
France}
\email
{titus.lupu@upmc.fr}

\address {
Institut Camille Jordan,
Université Lyon 1, 
43 bd. du 11 nov. 1918,
69622 Villeurbanne cedex,
France}
\email
{sabot@math.univ-lyon1.fr}

\address {CNRS and Universit{\'e} Paris-Dauphine\\
PSL Research University, Ceremade, UMR 7534\\
Place du Mar{\'e}chal de Lattre de Tassigny\\
75775 Paris cedex 16, France\\
and \\
Courant Institute of Mathematical Sciences\\ 
New York, NYU-ECNU \\
Institute of Mathematical Sciences at NYU Shanghai}
\email
{tarres@nyu.edu}

\begin{document}

\keywords{Gaussian free field; Ray-Knight identity; self-interacting processes; loop-soups; random currents; Ising model}
\subjclass[2010]{primary 60J27; 60J55; secondary 60K35; 82B20;
 	81T25; 81T60}

\begin{abstract}
Lupu introduced 
a coupling between a random walk loop-soup and a Gaussian free field, where the sign of the field is constant on each cluster of loops. This coupling is a signed version of isomorphism theorems relating the square of the GFF to the occupation field of Markovian trajectories. 
His
construction 
starts with a loop-soup, and by adding additional randomness samples a GFF out of it.
In this article we provide the inverse construction: starting from a signed free field and using a self-interacting random walk related to this field, we construct a random walk loop-soup. Our construction relies on the previous work by Sabot and Tarrès, which inverts the coupling from the square of the GFF rather than the signed GFF itself.
\end{abstract}
\maketitle
\section{Introduction}
\label{SecIntro}

The so called ``isomorphism theorems'' relate the square of a Gaussian free field (GFF) on an electrical network 
to occupation times of symmetric Markov jump processes
\cite{MarcusRosen2006MarkovGaussianLocTime,
Sznitman2012LectureIso}.
These date back to the work of Dynkin (Dynkin's isomorphism)
\cite{Dynkin1984IsomorphismPresentation,Dynkin1984Isomorphism, Dynkin1984PolynomOccupField}, and previously to Symanzik's
identities in Euclidean Quantum Field Theory \cite{Symanzik1969QFT} and Brydges-Fröhlich-Spencer random walk representation of spin systems \cite{BFS82Loop}.
Here we focus on the generalized second Ray-Knight theorem
\cite{ekmrs} and on Le Jan's isomorphism
\cite{LeJan2011Loops}.
The generalized second Ray-Knight theorem couples the squares of two GFFs with different, ordered, boundary conditions by adding the occupation times of independent Markovian excursions from boundary to boundary to the square with lower boundary conditions in order to obtain the square with higher boundary conditions. Le Jan's isomorphism states that the whole square of a GFF can be obtained as the occupation field of a Poisson Point Process of Markovian loops, known as loop-soup.
Le Jan's isomorphism in particular implies the 
generalized second Ray-Knight theorem.

In \cite{Lupu2014LoopsGFF} Lupu obtained ``signed'' or ``polarized'' versions
of isomorphism theorems, where one relates to the Markovian trajectories not only the square, but also the sign of the GFF. In particular the sign of the GFF is constant on each
Markovian trajectory. The construction goes through the introduction of the metric graph GFF. One first replaces each discrete edge of the electrical network by a continuous line, so as to obtain a continuous topological object, a one-dimensional simplicial complex known as metric graph or cable system, and then one interpolates the values of the GFF on the vertices by independent Brownian bridges inside the edges. This way one obtains a continuous Gaussian field. Its square can still be obtained as in Le Jan's isomorphism as an occupation field of a loop-soup of loops of the natural continuous diffusion on the metric graph. However, in this construction the sign components of the GFF are exactly the clusters of metric graph loops and the sign is chosen independently uniformly on each of them. 

Lupu's isomorphism has also a purely discrete description. One enlarges the clusters of the discrete loop-soup by opening the edges not visited by loops with certain probability, and then on each enlarged cluster one chooses a sign independently uniformly.

The above couplings have also a counterpart in the Ising ``world''. Indeed, conditional on the absolute value of the GFF, its sign is distributed like Ising spins with coupling constants given by the absolute value. Then, the enlarged
clusters in Lupu's isomorphism (discrete description),
conditional on the absolute value of the GFF on the vertices, are exactly Fortuin-Kasteleyn random clusters \cite{Grim2006FK}, with cluster weight $q=2$ and edge weights depending on the absolute value \cite{lupu-werner}.
FK random clusters with $q=2$ are coupled to the Ising spins through the Edwards-Sokal coupling \cite{EdwardsSokal88Ising}, where one simply chooses the spin independently uniformly on each clusters. The discrete loop-soup in Le Jan's isomorphism is related to the random current expansion of the Ising model \cite{lupu-werner,DC16RC,Lalwer2018Survey}.
Finally \cite{lupu-werner} connects the dots and showed that there is a natural coupling between Ising random currents and FK-Ising random clusters, which is actually Lupu's isomorphism conditioned on the absolute value of the GFF.
In Figure \ref{diagram} we summarize all above models and the couplings and relations between them.

\medskip

In this paper we deal with the inversion of Lupu's isomorphism, that is to say with retrieving the conditional law  of the discrete loop-soup given a discrete Gaussian free field (both its absolute value and its sign). 
This extends the work of Sabot and Tarrès who in
\cite{SabotTarres2015RK} gave the inversion of isomorphisms only in the case when the absolute value of the GFF was given,
or equivalently its square, but not its sign.
To fix the ideas, let us introduce some notations.

\medskip

Let $\mathcal{G}=(V,E)$ be a connected undirected graph, with $V$ at most countable and each vertex $x\in V$ of finite degree. We do not allow self-loops. Also in general we do not consider multiple edges, unless specified otherwise. 
Given $e\in E$ an edge, we will denote
$e_{+}$ and $e_{-}$ its end-vertices, even though $e$ is non-oriented and one can interchange $e_{+}$ and $e_{-}$. 
Each edge $e\in E$ is endowed with a conductance 
$W_{e}=W_{e_{-},e_{+}}=W_{e_{+},e_{-}}>0$. 
There may be a killing measure $\kappa=(\kappa_{x})_{x\in V}$ on vertices. 

We consider $(X_{t})_{t\ge0}$ the \textit{Markov jump processes} on $V$ which being in $x\in V$, jumps along an adjacent edge $e$ with rate $W_{e}$. Moreover if 
$\kappa_{x}\neq 0$, the process is killed at $x$ with rate $\kappa_{x}$ (the process is not defined after that time). $\zeta$ will denote the time up to which $X_{t}$ is defined. If $\zeta<+\infty$, then either the process has been killed by the killing measure $\kappa$ (and $\kappa \not\equiv 0$) or it has gone off to infinity in finite time 
(and $V$ infinite). We will assume that the process $X$ is transient, which means, if $V$ is finite, that 
$\kappa\not\equiv 0$. $\mathbb{P}_{x}$ will denote the law of $X$ started from $x$.
Let $(\textbf{G}(x,y))_{x,y\in V}$ be the Green function of 
$(X_{t})_{0\leq t<\zeta}$:
\begin{displaymath}
\textbf{G}(x,y)=\textbf{G}(y,x)=\mathbb{E}_{x}\left[\int_{0}^{\zeta} 
\indic_{\{X_{t}=y\}} dt\right].
\end{displaymath}
Let $\mathcal{E}$ be the Dirichlet form defined on functions $f$ on $V$ with finite support:
\begin{eqnarray}\label{Dirichlet-form}
\mathcal{E}(f,f)=\sum_{x\in V}\kappa_{x} f(x)^{2}+
\sum_{e\in E}W_e(f(e_{+})-f(e_{-}))^{2}.
\end{eqnarray}
$P_{\varphi}$ will be the law of $(\varphi_{x})_{x\in V}$ the centred 
\textit{Gaussian free field} (GFF) on $V$ with covariance
$E_{\varphi}[\varphi_{x}\varphi_{y}]=\textbf{G}(x,y)$. In case $V$ is finite, the density of $P_{\varphi}$ is
\begin{displaymath}
\dfrac{1}{(2\pi)^{\frac{\vert V\vert}{2}}\sqrt{\det \textbf{G}}}
\exp\left(-\dfrac{1}{2}\mathcal{E}(f,f)\right)\prod_{x\in V} df_{x}.
\end{displaymath}
$\varphi$ under $P_{\varphi}$ satisfies the 
\textit{spatial Markov property}. If $U$ is a subset of $V$ and
\begin{displaymath}
\partial U = \lbrace x\in U\vert\exists y\in V\setminus U, \text{ $x$ and $y$ joined by an edge $e\in E$}\rbrace,
\end{displaymath}
then $(\varphi_{x})_{x\in V\setminus U}$ conditional on
$(\varphi_{y})_{y\in U}$ has same law as conditional on
$(\varphi_{y})_{y\in \partial U}$.

Given $x_{0}\in V$ and $a\in\R$,
$P^{\lbrace x_{0}\rbrace,a}_{\varphi}$
will denote the law of
the GFF $\varphi$ conditioned to equal $a$ in $x_{0}$.
Note that if the killing measure $\kappa$ is supported in $x_{0}$, the law $P^{\lbrace x_{0}\rbrace,a}_{\varphi}$ does not depend on $\kappa$ and in this case one can as well take 
$\kappa= 0$.

We will denote by
$(\ell_{x}(t))_{x\in V, t\in [0,\zeta]}$ the family of local times of $X$:
\begin{displaymath}
\ell_{x}(t)=\int_{0}^{t}\indic_{\{X_{s}=x\}} ds.
\end{displaymath}
For all $x\in V$, $u>0$, let
\begin{displaymath}
\tau_{u}^{x}=\inf\lbrace t\geq 0; \ell_{x}(t)>u\rbrace.
\end{displaymath}
Recall the generalized second Ray-Knight theorem on discrete graphs by Eisenbaum, Kaspi, Marcus, Rosen and Shi \cite{ekmrs} (see also 
\cite{MarcusRosen2006MarkovGaussianLocTime,Sznitman2012LectureIso}):
\begin{theorem}[\textbf{Generalized second Ray-Knight theorem}]
\label{ThmRKGen}
For any $u>0$ and $x_{0}\in V$, 
\begin{center}
$\left(\ell_{x}(\tau_{u}^{x_{0}})+\dfrac{1}{2}\varphi_{x}^{2}\right)_{x\in V}$
under $\mathbb{P}_{x_{0}}(\cdot \vert \tau_{u}^{x_{0}}<\zeta)\otimes P^{\lbrace x_{0}\rbrace,0}_{\varphi}$
\end{center}
has the same law as 
\begin{center}
$\left(\dfrac{1}{2}\varphi_{x}^{2}\right)_{x\in V}$
under $P^{\lbrace x_{0}\rbrace,\sqrt{2u}}_{\varphi}$.
\end{center}
\end{theorem}

Sabot and Tarrès showed in \cite{SabotTarres2015RK} that the so-called ``magnetized'' reverse Vertex-Reinforced Jump Process (VRJP) provides an inversion of the generalized second Ray-Knight theorem, in the sense that it enables to retrieve the law of 
$(\ell_x(\tau_u^{x_0}), \varphi^2_x)_{x\in V}$ conditional on 
$\left(\ell_x(\tau_u^{x_0})+\frac{1}{2}\varphi^2_x\right)_{x\in V}$. The jump rates of that latter process 
are a product of a first factor accountable for a self-repulsion (reverse VRJP) and a second one which
can be interpreted as a ratio of two-point functions of the Ising model associated to time-evolving coupling constants
(magnetization). 
The process introduced in \cite{SabotTarres2015RK} 
also inverts the Le Jan's isomorphism
\cite{LeJan2011Loops} given the square of a GFF.

However in \cite{SabotTarres2015RK} the link with the Ising model is only implicit, and a natural question is whether Ray-Knight inversion can be described in a simpler form if we enlarge the state space of the dynamics, and in particular include the ``hidden'' spin variables. 

The answer is positive, and goes through a ``signed'' extension of the Ray-Knight isomorphism following Lupu's approach \cite{Lupu2014LoopsGFF}, which couples the sign of the GFF to the Markovian path. The Ray-Knight inversion will turn out to take a rather simple form  in Theorem \ref{thm-Poisson} of the present paper, where it will be defined not only through the spin variables but also random currents associated to the field though an extra Poisson Point Process. Further, in Theorem 
\ref{ThmPoissonLoopSoup} we will describe how the process we construct inverts Lupu's ``signed'' isomorphism between a loop-soup and a discrete GFF.

\medskip

The paper is organized as follows. 

In Section \ref{sec:srk} we recall some background on loop-soup isomorphisms and on related couplings and state and prove a signed version of the generalized second Ray-Knight theorem. We begin in Section \ref{sec:lejan} by a statement of Le Jan's isomorphism which couples the square of the Gaussian Free Field to the loop-soups, and recall how the generalized second Ray-Knight theorem can be seen as its corollary: for more details see \cite{lejan4}. In Section \ref{sec:lupu} we state Lupu's isomorphism which extends Le Jan's isomorphism and couples the  sign of the GFF to the loop-soups, using  a  metric graph extension of both the GFF and the Markov process. Lupu's isomorphism yields an interesting realization of the 
Edwards-Sokal  FK-Ising - spin Ising coupling,  and provides as well a ``Current+Bernoulli=FK'' coupling lemma \cite{lupu-werner}, which occur in the relationship between the discrete and metric graph versions. We briefly recall these couplings in Sections \ref{fkising} and \ref{randomcurrent}, as they are implicit in this paper. In Section \ref{sec:glupu} we state and prove the generalized second Ray-Knight ``version'' of Lupu's isomorphism, which we aim to invert. In Section
\ref{sec_diagram} we present a diagram which summarizes the models and the couplings.

Section \ref{sec:inversion} is devoted to the statements of inversions of these isomorphisms. We state in Section \ref{sec_Poisson} a signed version of the inversion of the generalized second Ray-Knight theorem through an extra Poisson Point Process, namely Theorem \ref{thm-Poisson}. In Section \ref{sec_dicr_time} we provide a discrete-time description of the process, whereas in Section \ref{sec_jump} we yield  an alternative version of that process through jump rates, which can be seen as an annealed version of the first one. 
The annealed process of Section \ref{sec_jump} is a reverse VRJP (self-repelling) which evolves on a subgraph of
$\mathcal{G}$ which itself shrinks over time. These subgraphs can be interpreted as FK-Ising random clusters associated to time-evolving, decreasing, edge weights.
In Section \ref{sec:lejaninv}
we deduce a signed inversion of Lupu's isomorphism for loop-soups.

Finally Section \ref{sec:proof} is devoted to the proof of Theorem \ref{thm-Poisson}: Section \ref{sec:pfinite} deals with the case of a finite graph without killing measure, and Section \ref{sec:pgen} deduces the proof in the general case.  

\section{Le Jan's and Lupu's isomorphisms}
\label{sec:srk}
\subsection{Loop-soups and Le Jan's isomorphism}
\label{sec:lejan}
The \textit{loop measure} associated to the Markov jump process
$(X_{t})_{0\leq t<\zeta}$ is defined as follows. Let $\mathbb{P}^{t}_{x,y}$ be the bridge probability measure from 
$x$ to $y$ in time $t$ (conditional on $t<\zeta$). Let $p_{t}(x,y)$ be the transition probabilities of 
$(X_{t})_{0\leq t<\zeta}$. 

Let $\mu_{\rm loop}$ be the measure on time-parametrized nearest-neighbor based loops (i.e. loops with a starting site)
\begin{displaymath}
\mu_{\rm loop}=\sum_{x\in V}\int_{t>0}\mathbb{P}^{t}_{x,x} p_{t}(x,x) \dfrac{dt}{t}.
\end{displaymath}
The loops will be considered here up to a rotation of parametrisation (with the corresponding pushforward measure induced by $\mu_{\rm loop}$), that is to say a loop $(\gamma(t))_{0\leq t\leq t_{\gamma}}$ will be the same as 
$(\gamma(T+t))_{0\leq t\leq t_{\gamma}-T}\circ (\gamma(T+t-t_{\gamma}))_{t_{\gamma}-T\leq t\leq t_{\gamma}}$ for all
$T\in (0, t_{\gamma})$, 
where $\circ$ denotes the concatenation of paths.
A \textit{loop-soup} of intensity $\alpha>0$, denoted 
$\mathcal{L}_{\alpha}$, is a Poisson random measure of intensity
$\alpha \mu_{\rm loop}$. We see it as a random collection of loops in $\mathcal{G}$. Observe that a.s. above each vertex 
$x\in V$, $\mathcal{L}_{\alpha}$ contains infinitely many trivial ``loops'' reduced to the vertex $x$. There are also with positive probability non-trivial loop that visit several vertices. 

Let  $L_{.}(\mathcal{L}_{\alpha})$ be the \textit{occupation field} of $\mathcal{L}_{\alpha}$ on $V$ i.e., for all $x\in V$, 
\begin{displaymath}
L_x(\mathcal{L}_{\alpha})=
\sum_{(\gamma(t))_{0\leq t\leq t_{\gamma}}\in\mathcal{L}_{\alpha}}
\int_{0}^{t_{\gamma}}\indic_{\{\gamma(t)=x\}} dt.
\end{displaymath}
In \cite{LeJan2011Loops} Le Jan shows that for transient Markov jump processes, $L_x(\mathcal{L}_{\alpha})<+\infty$ for all $x\in V$ a.s. For $\alpha=1/2$ he identifies the law of
$L_.(\mathcal{L}_{\alpha})$:
\begin{theorem}[\textbf{Le Jan's isomorphism}]
\label{ThmIsoLeJan}
$L_.(\mathcal{L}_{1/2})=\left(L_x(\mathcal{L}_{1/2})\right)_{x\in V}$ has the same law as
\\$\dfrac{1}{2}\varphi^2=\left(\dfrac{1}{2}\varphi_{x}^{2}\right)_{x\in V}$
under $P_{\varphi}$.
\end{theorem}

Let us briefly recall how Le Jan's isomorphism enables one to retrieve the generalized second Ray-Knight theorem stated in Section \ref{SecIntro}: for more details, see for instance \cite{lejan4}. We assume that $\kappa$ is supported by $x_0$: the general case can be dealt with by an argument similar to the proof of Proposition \ref{PropKillingCase}. 
Let $D=V\setminus\{x_0\}$, and note that the isomorphism in particular implies that $L_.(\mathcal{L}_{1/2})$ conditional on $L_{x_0}(\mathcal{L}_{1/2})=u$ has the same law as $\varphi^2/2$ conditional on $\varphi_{x_0}^2/2=u$. 

On the one hand, given the classical energy decomposition, we have $\varphi=\varphi^D+\varphi_{x_0}$, with $\varphi^D$ the GFF associated to the restriction of $\mathcal{E}$ to $D$, where $\varphi^D$ and $\varphi_{x_0}$ are independent. Now $\varphi^2/2$ conditional on 
$\varphi_{x_0}^2/2=u$ has the law of $(\varphi^D+\eta\sqrt{2u})^2/2$, where $\eta$ is the sign of $\varphi_{x_0}$, which is independent of $\varphi^D$. But $\varphi^D$ is symmetric, so that the latter also has the law of $(\varphi^D+\sqrt{2u})^2/2$.

On the other hand, the loop-soup $\mathcal{L}_{1/2}$ can be decomposed into the two independent loop-soups $\mathcal{L}_{1/2}^D$ contained in $D$ and $\mathcal{L}_{1/2}^{(x_0)}$ hitting $x_0$. Now $L_.(\mathcal{L}_{1/2}^D)$ has the law of $(\varphi^D)^2/2$ and  $L_.(\mathcal{L}_{1/2}^{(x_0)})$ conditional on $L_{x_0}(\mathcal{L}_{1/2}^{(x_0)})=u$ has the law of the occupation field of the Markov chain 
$\ell(\tau_{u}^{x_{0}})$
under $\mathbb{P}_{x_{0}}(\cdot \vert \tau_{u}^{x_{0}}<\zeta)$, which enables us to conclude.

\subsection{Lupu's isomorphism}
\label{sec:lupu}
As in \cite{Lupu2014LoopsGFF}, we consider the \textit{metric graph} $\tilde{\mathcal{G}}$ 
(also known as \textit{cable system}) associated to 
$\mathcal{G}$. As a a topological space, it is obtained by replacing each discrete edge $e$ by a continuous compact line interval $I_{e}$. If two edges $e$ and $e'$ share a common extremity, the corresponding endpoints of $I_{e}$ and
$I_{e'}$ are identified. So $\tilde{\mathcal{G}}$ is a one-dimensional simplicial complex, with $0$-cells corresponding to vertices in $V$, and $1$-cells to edges in $E$. $V$ is naturally identified to a subset of $\tilde{\mathcal{G}}$. We further endow $\tilde{\mathcal{G}}$ with a metric by setting the length of each $I_{e}$ to be equal to 
$\frac{1}{2}W_{e}^{-1}$. We also consider the Radon measure
$\tilde m$ on $\tilde{\mathcal{G}}$, such that its restriction to each $I_{e}$ is a one-dimensional Lebesgue measure of total mass $\frac{1}{2}W_{e}^{-1}$ (i.e. the length measure).

One can define a standard Brownian motion 
$B^{\tilde{\mathcal{G}}}_{t}$ on 
$\tilde{\mathcal{G}}$ as follows. Inside each $I_{e}$,
$B^{\tilde{\mathcal{G}}}_{t}$ behaves as a standard one-dimensional Brownian motion. Upon reaching a vertex, 
$B^{\tilde{\mathcal{G}}}_{t}$ performs immediately independent Brownian excursions inside each adjacent edge, with equal rate for each direction, and that until eventually traversing one of the edge and reaching a neighbor vertex on the other side.
See Section 2 of \cite{Lupu2014LoopsGFF} for more details. Alternatively, 
$B^{\tilde{\mathcal{G}}}_{t}$ can be defined as the symmetric Markov process associated to the Dirichlet form
\begin{displaymath}
\tilde{\mathcal{E}}(f,f)=\dfrac{1}{2}
\int_{x\in\tilde{\mathcal{G}}}
f'(x)^{2}d\tilde m (x),
\end{displaymath}
where $f: \tilde{\mathcal{G}}\rightarrow \R$ is
continuous and $\mathcal{C}^{1}$ inside each of the
$I_{e}$. For more on Dirichlet forms and associated Markov processes, see \cite{FukushimaDirichletForms}.
$B^{\tilde{\mathcal{G}}}_{t}$ admits a family of Brownian local times $\tilde{\ell}_{x}(t)$, continuous in $(x,t)$, such that
for every bounded measurable function
$f: \tilde{\mathcal{G}}\rightarrow \R$ and every $t\geq 0$,
\begin{displaymath}
\int_{0}^{t} f(B^{\tilde{\mathcal{G}}}_{s}) ds=
\int_{x\in\tilde{\mathcal{G}}} f(x)\tilde{\ell}_{x}(t)
d\tilde{m}(x).
\end{displaymath}
Let be $\tilde{\zeta}$ the first time when either
$B^{\tilde{\mathcal{G}}}_{t}$ explodes to infinity
(if possible), or
\begin{displaymath}
\sum_{x\in V}\kappa_{x}\tilde{\ell}_{x}(t)
\end{displaymath}
hits an independent exponential r.v. of mean $1$
(if $\kappa\not\equiv 0$). If neither of these happens,
$\tilde{\zeta}=+\infty$.

One can deterministically recover the Markov jump process
$(X_{t})_{0\leq t<\zeta}$ out of
$(B^{\tilde{\mathcal{G}}}_{t})_{0\le t< \tilde{\zeta}}$
Let be
\begin{displaymath}
A_{V}(t)=\sum_{x\in V}\tilde{\ell}_{x}(t),
\qquad
A_{V}^{-1}(t)=\inf\lbrace s\geq 0\vert A_{V}(s)>t\rbrace.
\end{displaymath}
Then, if $B^{\tilde{\mathcal{G}}}_{0}\in V$,
$B^{\tilde{\mathcal{G}}}_{A_{V}^{-1}(t)}$ is a Markov jump process $X_{t}$ on $V$ with jump rates 
$(W_e)_{e\in E}$. 
Moreover, 
$A_{V}(\tilde{\zeta})=\zeta$ and 
$\ell_{x}(t)=\tilde{\ell}_{x}(A_{V}^{-1}(t))$ 
for $x\in V$. 

In \cite{Lupu2014LoopsGFF} Lupu introduces a measure 
$\tilde{\mu}_{\rm loop}$
on time-parametrized continuous loops on $\tilde{\mathcal{G}}$,
associated to the Brownian motion 
$(B^{\tilde{\mathcal{G}}}_{t})_{0\le t< \tilde{\zeta}}$.
$\tilde{\mathcal{L}}_{\alpha}$ will denote the Poisson Point Process of loops of intensity $\alpha\tilde{\mu}_{\rm loop}$. The discrete-space loops $\mathcal{L}_{\alpha}$ can be 
deterministically obtained from $\tilde{\mathcal{L}}_{\alpha}$ by taking the print of the latter on $V$, using the time-change
$A_{V}^{-1}$. Note that $\tilde{\mathcal{L}}_{\alpha}$ contains loops that do not visit $V$ and are entirely contained in one of the $I_{e}$. These do not contribute to 
$\mathcal{L}_{\alpha}$. $\tilde{\mathcal{L}}_{\alpha}$ has an occupation field
$(L_{x}(\tilde{\mathcal{L}}_{\alpha}))
_{x\in \tilde{\mathcal{G}}}$,
which is a sum over loops in $\tilde{\mathcal{L}}_{\alpha}$
of Brownian local times in $x$ of the loops. Moreover,
\begin{displaymath}
L_{x}(\tilde{\mathcal{L}}_{\alpha})=
L_{x}(\mathcal{L}_{\alpha}),~
\forall x\in V.
\end{displaymath}

Similarly, the GFF $\varphi$ on $\mathcal{G}$ with law 
$P_\varphi$ can be extended to a GFF $\tilde{\varphi}$ on 
$\tilde{\mathcal{G}}$ as follows. Given $e\in E$, one considers inside $I_{e}$ a conditionally independent Brownian bridge, actually a bridge of a $\sqrt{2} \times$ 
\textit{standard Brownian motion}, of length 
$\frac{1}{2}W_{e}^{-1}$, with end-values
$\varphi_{e_{-}}$ and $\varphi_{e_{+}}$. This provides a continuous Gaussian field on the metric graph which still satisfies the spatial Markov property.

Lupu introduced in \cite{Lupu2014LoopsGFF} an isomorphism linking  the GFF $\tilde{\varphi}$ and the loop-soup 
$\tilde{\mathcal{L}}_{1/2}$ on the metric graph
$\tilde{\mathcal{G}}$. This one uses the clusters of
$\tilde{\mathcal{L}}_{1/2}$. First of all, \textit{cluster}
is an equivalence class of loops in 
$\tilde{\mathcal{L}}_{1/2}$, where two loops
$\tilde \gamma$ and $\tilde \gamma'$ belong to the same cluster if there is a finite chain 
$\tilde \gamma_{0},\dots, \tilde \gamma_{k}$
of loops in $\tilde{\mathcal{L}}_{1/2}$ such that
$\tilde \gamma_{0}=\gamma$,
$\tilde \gamma_{k}=\gamma'$, and for
$i\in\lbrace 1,\dots, k\rbrace$ the loops
$\tilde \gamma_{i-1}$ and $\tilde \gamma_{i}$
(their ranges) have a non-empty intersection.
By extension, a cluster will also be a (connected) subset of
$\tilde{\mathcal{G}}$ obtained as a union 
over a cluster of loops of ranges of the loops.
Note that if $\mathcal{L}_{1/2}$ is obtained as a print of
$\tilde{\mathcal{L}}_{1/2}$ on $V$, each cluster of
$\mathcal{L}_{1/2}$ is contained in a cluster of
$\tilde{\mathcal{L}}_{1/2}$, but in general a cluster
$\tilde{\mathcal{L}}_{1/2}$ may correspond to several different clusters of $\mathcal{L}_{1/2}$ merged together. This is because there are more connections at the level of the metric graph.

\begin{theorem}[\textbf{Lupu's isomorphism}]
\label{thm:Lupu} 
There is a coupling between the Poisson ensemble of loops 
$\tilde{\mathcal{L}}_{1/2}$ and 
$(\tilde{\varphi}_y)_{y\in\tilde{\mathcal{G}}}$ 
defined above, such that the two following constraints hold: 
\begin{itemize}
\item For all $y\in\tilde{\mathcal{G}}$, $L_y(\tilde{{\mathcal{L}}}_{1/2})=\frac{1}{2}\tilde{\varphi}_y^2$;
\item the zero set of 
$\tilde\varphi$,
$\lbrace x\in \tilde{\mathcal{G}}\vert 
\tilde\varphi_{x}=0\rbrace$, is the set of points in 
$\tilde{\mathcal{G}}$ not visited by any loop of
$\tilde{\mathcal{L}}_{1/2}$;
\item the clusters of loops of $\tilde{\mathcal{L}}_{1/2}$ are exactly the sign clusters of $(\tilde{\varphi}_y)_{y\in\tilde{\mathcal{G}}}$.
\end{itemize}
Conditional on $(|\tilde{\varphi}_y|)_{y\in\tilde{\mathcal{G}}}$, the sign of $\tilde{\varphi}$ on each of its connected components is distributed independently and uniformly in 
$\{-1,+1\}$. 
\end{theorem}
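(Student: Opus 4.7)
My plan is to construct the coupling by starting from the loop soup $\tilde{\mathcal{L}}_{1/2}$ and building a candidate signed field, which I will then identify in law with the metric graph GFF $\tilde{\varphi}$. Set $\rho_y := \sqrt{2 L_y(\tilde{\mathcal{L}}_{1/2})}$ for $y \in \tilde{\mathcal{G}}$. A metric graph analogue of Le Jan's isomorphism, obtained by subdividing each edge into arbitrarily fine chains of intermediate vertices and passing to a limit using continuity of both sides, yields $\rho^2 \equilaw \tilde{\varphi}^2$ as continuous fields on $\tilde{\mathcal{G}}$. Continuity of $y \mapsto L_y$ makes $\{y : L_y > 0\}$ open; let $(C_i)_i$ be its connected components. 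Independently of $\tilde{\mathcal{L}}_{1/2}$, I sample uniform i.i.d.\ signs $\epsilon_i \in \{\pm 1\}$ and define $\tilde{\psi}_y := \epsilon_{i(y)} \rho_y$ when $y \in C_{i(y)}$, and $\tilde{\psi}_y := 0$ otherwise. A short topological check shows that the $(C_i)$ coincide with the clusters of $\tilde{\mathcal{L}}_{1/2}$: each continuous loop has connected range contained in $\{L_y > 0\}$, and within each $C_i$ any two loops are chained through overlapping loops. By construction $\tilde{\psi}$ therefore satisfies $L_y(\tilde{\mathcal{L}}_{1/2}) = \frac{1}{2} \tilde{\psi}_y^2$ pointwise, has sign clusters equal to the loop clusters, and carries uniform i.i.d.\ signs on each cluster conditional on $|\tilde{\psi}|$. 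All that remains is to show $\tilde{\psi} \equilaw \tilde{\varphi}$.

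To verify $\tilde{\psi} \equilaw \tilde{\varphi}$, I would split the check into: (a) the joint law of $\tilde{\psi}|_V$ equals that of $\tilde{\varphi}|_V$; and (b) conditionally on $\tilde{\psi}|_V$, the restriction of $\tilde{\psi}$ to each edge $e$ is an independent $\sqrt{2}$-Brownian bridge of length $\frac{1}{2}W_e^{-1}$ between $\tilde{\psi}_{e_-}$ and $\tilde{\psi}_{e_+}$. For (b), conditional on the local times at $e_{\pm}$ the loop soup in the interior of $e$ is a Poisson excursion process whose occupation field is that of a $\sqrt{2}$-Brownian bridge between $\rho_{e_-}$ and $\rho_{e_+}$; since Brownian excursions between consecutive zeros carry independent uniform signs given their absolute value, attaching our i.i.d.\ signs to the components $C_i$ strictly contained in the interior of $e$ reconstructs precisely such a signed bridge. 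For (a), I would further condition on $(|\tilde{\psi}_x|)_{x \in V}$: the probability that two neighbours $x$, $z$ connected by $e$ lie in the same $C_i$ coincides with the probability that the excursion soup in $e$ covers the whole edge, equal by a standard computation to $1 - \exp(-2 W_e |\tilde{\psi}_x||\tilde{\psi}_z|)$. Integrating out the i.i.d.\ cluster signs should reproduce the sign-dependent factor $\exp(W_e \tilde{\varphi}_{e_-}\tilde{\varphi}_{e_+})$ entering the density of $\tilde{\varphi}|_V$ through the Dirichlet form \eqref{Dirichlet-form}.

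The principal obstacle is item (a): one must recognise the joint law of the random partition of $V$ induced by the $(C_i)$, together with i.i.d.\ uniform signs attached to each block, as exactly the Edwards--Sokal / FK--Ising coupling of the Gibbs measure with spin interaction $\exp(W_e |\tilde{\varphi}_x||\tilde{\varphi}_z|\, s_x s_z)$, $s_x \in \{\pm 1\}$. The key numerical input is the coincidence of the Brownian bridge non-crossing probability $1 - \exp(-2 W_e |\tilde{\varphi}_x||\tilde{\varphi}_z|)$ with the FK edge-weight of the corresponding Ising model, and it is precisely here that the factor $\sqrt{2}$ in the definition of $\tilde{\varphi}$ and the edge length $(2W_e)^{-1}$ are calibrated to match. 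Once this identification is carried out, summing over cluster partitions and cluster signs yields the discrete signed GFF density, thereby closing the verification of $\tilde{\psi} \equilaw \tilde{\varphi}$ and producing the desired coupling.
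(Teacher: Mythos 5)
Your overall architecture — build $\tilde{\psi}$ from the loop soup via a metric-graph Le Jan identity plus i.i.d.\ signs on clusters, then identify its law with $\tilde{\varphi}$ — is the right one (note the paper itself does not reprove this theorem; it quotes it from Lupu, and the discrete machinery of its Section 2, namely Lemma \ref{lem:fki}, Lemma \ref{36}, and Propositions \ref{FK-Ising}, \ref{PropIsoLupuLoops}, \ref{RCFKIsing}, is exactly the skeleton your step (a) would need). But your step (a) contains a genuine quantitative error at the crux. Conditionally on the vertex occupation field, the probability that the soup inside edge $e$ glues $e_-$ to $e_+$ by excursion coverage is $1-e^{-J_e}$ with $J_e=W_e|\tilde\psi_{e_-}\tilde\psi_{e_+}|$, not $1-e^{-2J_e}$: by Lemma \ref{36} the non-coverage probability is $\exp\left(-2W_e\sqrt{L_{e_-}L_{e_+}}\right)=\exp\left(-W_e|\tilde\psi_{e_-}\tilde\psi_{e_+}|\right)$ since $L=\frac12\tilde\psi^2$, and moreover this holds only \emph{conditionally on the absence of a discrete crossing} of $e$. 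Connections are also produced by loops visiting several vertices, and given the occupation field their crossing numbers follow the random-current law with weights $J_e$, which is \emph{not} a product law across edges (parity constraints); so your implicit claim that the within-edge connection events are conditionally independent with probability $1-e^{-2J_e}$ given $(|\tilde\psi_x|)_{x\in V}$ fails. Recovering the FK-Ising weight $1-e^{-2J_e}$ requires superposing the current-occupied edges with the independent Bernoulli$(1-e^{-J_e})$ coverage — precisely the ``Current+Bernoulli=FK'' lemma of Lupu--Werner (Proposition \ref{RCFKIsing}), a nontrivial combinatorial identity you cannot skip. As written, the Edwards--Sokal identification in (a) conflates these two layers and does not close.

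There is also a cleaner route that makes (a) and (b) unnecessary, and it is worth seeing why: once you have your first step, $\rho^2\equilaw\tilde\varphi^2$ \emph{as continuous fields on all of} $\tilde{\mathcal{G}}$, the cluster partition on each side is a \emph{deterministic functional} of the absolute field — the connected components of the open set where it is positive. (In your topological check, note that the closed range of a loop is \emph{not} contained in $\{L>0\}$: a loop's local time vanishes at the endpoints of its range, so you should write $\{L>0\}$ as the union of the \emph{interiors} of the ranges and then run the chaining argument for open connected covers.) Likewise the sign clusters of $\tilde\varphi$ are the components of $\{\tilde\varphi\neq0\}$ by continuity. Hence the only statement left to prove is that, conditionally on $|\tilde\varphi|$, the signs of $\tilde\varphi$ on these components are i.i.d.\ uniform — the third assertion of the theorem, which follows from the strong Markov property of the metric-graph GFF (excursion sign symmetry, the global version of the fact you invoke inside a single edge in (b)). Then $\tilde\psi$ and $\tilde\varphi$ are the same measurable sign-attachment applied to absolute fields equal in law, and the coupling follows with no discrete FK computation at all. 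If you prefer your discrete route, it can be repaired, but only via the full pipeline: Le Jan on $V$, random-current law of crossings, Lemma \ref{36}, and Proposition \ref{RCFKIsing}.
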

Lupu's isomorphism and the idea of using metric graphs were applied in \cite{Lupu2015ConvCLE} to show that on the discrete half-plane $\mathbb{Z}\times\mathbb{N}$, the scaling limits of outermost boundaries of clusters of loops in loop-soups are the Conformal Loop Ensembles $\hbox{CLE}$.

Let $\mathcal{O}(\tilde{\varphi})$ (resp. $\mathcal{O}(\tilde{\mathcal{L}}_{1/2})$) be the set of edges $e\in E$ such that $\tilde{\varphi}$ (resp. $\tilde{\mathcal{L}}_{1/2}$) does not touch $0$ on $I_{e}$, in other words such that all the edge-interval $I_{e}$ remains in the same sign cluster of 
$\tilde{\varphi}$ (resp. loop cluster of
$\tilde{\mathcal{L}}_{1/2}$). 
$\mathcal{O}(\tilde{\varphi})$ and 
$\mathcal{O}(\tilde{\mathcal{L}}_{1/2})$ have same law.
Let 
$\mathcal{O}(\mathcal{L}_{1/2})$ be the set of edges $e\in E$ that are crossed (i.e. endpoints visited consecutively) by 
loops in $\mathcal{L}_{1/2}$ (obtained as the print on $V$ of
$\tilde{\mathcal{L}}_{1/2}$).

In order to translate Lupu's isomorphism back onto the initial graph $\mathcal{G}$, one needs to describe on one hand the distribution of $\mathcal{O}(\tilde{\varphi})$ conditional on the values of $(\varphi_{x})_{x\in V}$, and on the other hand the distribution of $\mathcal{O}(\tilde{\mathcal{L}}_{1/2})$  conditional on $\mathcal{L}_{1/2}$ and the cluster of loops $\mathcal{O}(\mathcal{L}_{1/2})$ on the discrete graph $\mathcal G$. These two distributions are described respectively in Sections \ref{fkising} and \ref{randomcurrent}, and provide realisations of the  Edwards-Sokal FK-Ising - spin Ising coupling and of the ``Current+Bernoulli=FK'' coupling lemma \cite{lupu-werner}.

\subsection{The FK-Ising distribution of $\mathcal{O}(\tilde{\varphi})$ conditional on $|\varphi|$}
\label{fkising}
\begin{lemma}
\label{lem:fki}
Conditional on $(\varphi_{x})_{x\in V}$, 
$(\indic_{\{e\in \mathcal{O}(\tilde{\varphi})\}})_{e\in E}$
is a family of independent random variables and
\begin{displaymath}
\mathbb{P}\left(e\not\in \mathcal{O}(\tilde{\varphi})\vert \varphi\right)=
\left\lbrace
\begin{array}{ll}
1 & \text{if}~ \varphi_{e_{-}}\varphi_{e_{+}}<0,\\ 
\exp\left(-2W_{e}\varphi_{e_{-}}\varphi_{e_{+}}\right)
& \text{if}~ \varphi_{e_{-}}\varphi_{e_{+}}>0.
\end{array} 
\right.
\end{displaymath}
\end{lemma}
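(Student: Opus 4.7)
The strategy is to exploit the product structure of the cable-graph GFF conditioned on the vertex values. By the construction recalled in Section \ref{sec:lupu}, given $(\varphi_{x})_{x\in V}$ the restrictions of $\tilde\varphi$ to distinct edges are conditionally independent: on each $e\in E$, the restriction of $\tilde\varphi$ to $e$ is a bridge of a $\sqrt{2}$-speed Brownian motion of length $(2W_{e})^{-1}$ with endpoint values $\varphi_{e_-}$ and $\varphi_{e_+}$. Since the event $\{e\in\mathcal O(\tilde\varphi)\}$ depends only on the values of $\tilde\varphi$ along $e$, the conditional independence of the indicators $(\indic_{\{e\in\mathcal O(\tilde\varphi)\}})_{e\in E}$ given $\varphi$ is immediate, and it remains to compute a one-edge probability.

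Fix $e\in E$ and set $a=\varphi_{e_-}$, $b=\varphi_{e_+}$, $T=(2W_{e})^{-1}$. If $ab<0$, continuity of the bridge forces it to vanish somewhere on $e$, so $\mathbb{P}(e\notin\mathcal O(\tilde\varphi)\mid\varphi)=1$. If $ab>0$, I write the bridge on $e$ as $\sqrt{2}\, X$, where $X$ is a standard Brownian bridge of length $T$ from $a/\sqrt{2}$ to $b/\sqrt{2}$. The classical reflection-principle identity for standard Brownian bridges then gives
\begin{equation*}
\mathbb{P}\bigl(\min_{s\in[0,T]}X_{s}>0\bigr)\;=\;1-\exp\!\left(-\frac{2(a/\sqrt{2})(b/\sqrt{2})}{T}\right)\;=\;1-\exp(-2W_{e}\,ab),
\end{equation*}
so taking complements yields $\mathbb{P}(e\notin\mathcal O(\tilde\varphi)\mid\varphi)=\exp(-2W_{e}\varphi_{e_-}\varphi_{e_+})$, as claimed. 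The case of both endpoints negative is handled identically (or follows by symmetry $\tilde\varphi\mapsto-\tilde\varphi$).

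There is no serious obstacle here; the only mild subtlety is to track the factor $\sqrt{2}$ correctly, as the variance-doubling of the cable-graph Brownian motion is exactly what produces the factor $2$ in the exponent of the final formula. The only non-trivial ingredient is the classical closed form for the probability that a standard Brownian bridge avoids the origin, itself a direct consequence of the reflection principle applied to the numerator and denominator in the bridge conditioning.
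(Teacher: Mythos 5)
Your proof is correct and follows essentially the same route as the paper: conditional independence of the Brownian bridges over distinct edges, then the reflection principle for the probability that a bridge avoids zero, with the factor $\sqrt{2}$ and the edge length $(2W_{e})^{-1}$ combining to give the exponent $-2W_{e}\varphi_{e_-}\varphi_{e_+}$. The paper phrases the reflection step as a ratio of Gaussian densities, $\exp\bigl(-\tfrac{1}{2}W_{e}(\varphi_{e_-}+\varphi_{e_+})^{2}\bigr)/\exp\bigl(-\tfrac{1}{2}W_{e}(\varphi_{e_-}-\varphi_{e_+})^{2}\bigr)$, while you invoke the equivalent closed-form bridge formula after rescaling to a standard bridge — the same argument in different clothing.
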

\begin{proof}
Conditional on $(\varphi_{x})_{x\in V}$, 
the metric graph GFF
$(\tilde{\varphi}_y)_{y\in\tilde{\mathcal{G}}}$ 
is constructed by adding independent Brownian bridges on each edge, so that $(\indic_{\{e\in \mathcal{O}(\tilde{\varphi})\}})_{e\in E}$ are conditionally independent random variables, and it follows from the reflection principle for one-dimensional Brownian motion that, if $\varphi_{e_{-}}\varphi_{e_{+}}>0$, then 
\begin{displaymath}
\mathbb{P}\left(e\not\in \mathcal{O}(\tilde{\varphi})\vert \varphi,\varphi_{e_{-}}\varphi_{e_{+}}>0\right)=
\dfrac{\exp\left(-\frac{1}{2}W_{e}(\varphi_{e_{-}}+\varphi_{e_{+}})^{2}\right)}
{\exp\left(-\frac{1}{2}W_{e}(\varphi_{e_{-}}-\varphi_{e_{+}})^{2}\right)}=
\exp\left(-2W_{e}\varphi_{e_{-}}\varphi_{e_{+}}\right).
\qedhere
\end{displaymath}
\end{proof}

Let us now recall how the conditional probability in Lemma \ref{lem:fki} yields a realization of the FK-Ising coupling. 

 Assume $V$ is finite. Let $(J_{e})_{e\in E}$ be a family of positive weights. A \textit{(spin) Ising model} on $V$ with interaction constants $(J_{e})_{e\in E}$ is a
probability on configuration of spins $({\sigma}_{x})_{x\in V}\in \{+1,-1\}^V$ such that
\begin{displaymath}
\mathbb{P}^{\rm Isg}_{J}((\sigma_x)_{x\in V})=
\dfrac{1}{\mathcal{Z}^{\rm Isg}_{J}}\exp\left(\sum_{e\in E}
J_{e}\sigma_{e_{-}}\sigma_{e_{+}}\right).
\end{displaymath}
An \textit{FK-Ising random cluster model}
\cite{Grim2006FK} with edge weights
$(1-e^{-2J_{e}})_{e\in E}$ is a random configuration of open (value $1$) and closed
edges (value $0$) such that
\begin{displaymath}
\mathbb{P}^{\rm FK-Isg}_{J}((\omega_{e})_{e\in E})=
\dfrac{1}{\mathcal{Z}^{\rm FK-Isg}_{J}}
2^{\#\text{clusters}}
\prod_{e\in E}(1-e^{-2J_{e}})^{\omega_{e}}(e^{-2J_{e}})^{1-\omega_{e}},
\end{displaymath}
where "$\#\text{clusters}$" denotes the number of clusters created by open edges.

The Edwards-Sokal \cite{EdwardsSokal88Ising} coupling between FK-Ising and spin Ising reads as follows:
\begin{theorem}[\textbf{Edwards-Sokal coupling}]
\label{FK-Ising}
Given an FK-Ising model, sample on each cluster an independent uniformly distributed spin. The spins are then distributed according to the Ising model. Conversely, given a spin configuration
$\hat{\sigma}$ following the Ising distribution, consider each edge $e$, such that
$\hat{\sigma}_{e_{-}}\hat{\sigma}_{e_{+}}<0$, closed, and each edge $e$, such that
$\hat{\sigma}_{e_{-}}\hat{\sigma}_{e_{+}}>0$ open with probability
$1-e^{-2J_{e}}$. Then the open edges are distributed according to the FK-Ising model.
The two couplings between FK-Ising and spin Ising are the same.
\end{theorem}

Consider the GFF $\varphi$ on $\mathcal{G}$ distributed according to $P_{\varphi}$. Let
$J_{e}(\vert\varphi\vert)$ be the random interaction constants
\begin{displaymath}
J_{e}(\vert\varphi\vert)=W_{e}\vert\varphi_{e_{-}}\varphi_{e_{+}}\vert.
\end{displaymath}

Conditional on $\vert\varphi\vert$,
$(\operatorname{sign}(\varphi_{x}))_{x\in V}$ follows an Ising distribution with interaction constants $(J_{e}(\vert\varphi\vert))_{e\in E}$:
indeed, the Dirichlet form \eqref{Dirichlet-form} can be written as
\begin{equation}
\label{EqDirichlet}
\mathcal{E}(\varphi,\varphi)=\sum_{x\in V}\kappa_{x} \varphi_{x}^{2}+
\sum_{x\in V}W_{x}\varphi_{x}^{2}-
2\sum_{e\in E}J_e(\vert\varphi\vert) \operatorname{sign}(
\varphi_{e_{+}})\operatorname{sign}(\varphi_{e_{-}}),
\end{equation}
where
$$W_{x}=\sum_{\substack{y\in V\\y\sim x}} 
W_{x,y},$$
$y\sim x$ meaning that $x$ and $y$ are joined by an edge.
Similarly, when $\varphi$ distributed according to 
$P_{\varphi}^{\{x_0\},\sqrt{2u}}$ has boundary condition $\sqrt{2u}\ge 0$ on $x_0$, then $(\operatorname{sign}(\varphi_{x}))_{x\in V}$ has an Ising distribution
with interaction $(J_{e}(\vert\varphi\vert))_{e\in E}$ and conditioned on $\sigma_{x_0}=+1$.

Now, conditional on $\varphi$, $\mathcal{O}(\tilde{\varphi})$ has FK-Ising distribution  with weights
$(1-e^{-2J_{e}(\vert\varphi\vert)})_{e\in E}$. Indeed, the probability for $e\in\mathcal{O}(\tilde{\varphi})$ conditional on $\varphi$ is $1-e^{-2J_{e}(\vert\varphi\vert)}$,  by Lemma \ref{lem:fki}, as in Theorem \ref{FK-Ising}. 

Note that, given that $\mathcal{O}(\tilde{\varphi})$ has FK-Ising distribution, the fact that the sign on its connected components is distributed independently and uniformly in $\{-1,1\}$ can be seen either as a consequence of Theorem \ref{FK-Ising}, or from Theorem \ref{thm:Lupu}.

Given $\varphi=(\varphi_x)_{x\in V}$ on the discrete graph $\mathcal{G}$, we introduce in Definition \ref{def_FK-Ising} the random set of edges which has the distribution of $\mathcal{O}(\tilde{\varphi})$ conditional on $\varphi=(\varphi_x)_{x\in V}$. 
\begin{definition}\label{def_FK-Ising}
We let $\mathcal{O}(\varphi)$ be a random set of edges which has the distribution of $\mathcal{O}(\tilde{\varphi})$ conditional on 
$\varphi=(\varphi_x)_{x\in V}$ given by Lemma \ref{lem:fki}.
\end{definition}
\subsection{Distribution of $\mathcal{O}(\tilde{\mathcal{L}}_{1/2})$  conditional on $\mathcal{L}_{1/2}$ }
\label{randomcurrent}
The distribution of $\mathcal{O}(\tilde{\mathcal{L}}_{1/2})$  conditional on $\mathcal{L}_{1/2}$ can be retrieved by  Corollary 3.6 in \cite{Lupu2014LoopsGFF}, which reads as follows. 
\begin{lemma}[Corollary 3.6 in \cite{Lupu2014LoopsGFF}]
\label{36} Conditional on $\mathcal{L}_{1/2}$, the events $e\not\in\mathcal{O}(\tilde{\mathcal{L}}_{1/2})$, $e\in E\setminus\mathcal{O}(\mathcal{L}_{1/2})$, are independent and have probability
\begin{equation}
\label{cp}
\exp\left(-2 W_{e} \sqrt{L_{e_{+}}(\mathcal{L}_{1/2})L_{e_{-}}(\mathcal{L}_{1/2})}\right).
\end{equation}
\end{lemma}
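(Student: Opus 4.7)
The plan is to decompose the metric graph loop soup $\tilde{\mathcal{L}}_{\frac{1}{2}}$ relative to its discrete projection $\mathcal{L}_{\frac{1}{2}}$. Conditionally on $\mathcal{L}_{\frac{1}{2}}$, the additional randomness in $\tilde{\mathcal{L}}_{\frac{1}{2}}$ on each cable $e\in E\setminus\mathcal{O}(\mathcal{L}_{\frac{1}{2}})$ consists of Brownian excursions from the endpoints $e_\pm$ into the cable interior that do not reach the opposite endpoint (since no discrete loop crosses $e$). By the product structure of the underlying Poisson point process, these contributions on distinct cables are independent. This yields the conditional independence of the events $\{e\not\in\mathcal{O}(\tilde{\mathcal{L}}_{\frac{1}{2}})\}_{e\in E\setminus\mathcal{O}(\mathcal{L}_{\frac{1}{2}})}$ claimed in the lemma.

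For the explicit probability, I would fix $e\in E\setminus\mathcal{O}(\mathcal{L}_{\frac{1}{2}})$, parameterize the cable as $[0,\ell_e]$ with $\ell_e = (2W_e)^{-1}$, and write $a = L_{e_-}(\mathcal{L}_{\frac{1}{2}})$, $b = L_{e_+}(\mathcal{L}_{\frac{1}{2}})$. The event $\{e\not\in\mathcal{O}(\tilde{\mathcal{L}}_{\frac{1}{2}})\}$ is equivalent to the cable containing a point not visited by any loop, which translates to $M_-+M_+<\ell_e$, where $M_\pm$ is the maximal depth reached by excursions from $e_\pm$. Conditionally on $\mathcal{L}_{\frac{1}{2}}$, the maxima $M_-$ and $M_+$ are independent and distributed as the maxima of Poisson point processes of It\^o excursions with total local times $a$ and $b$, restricted to the non-crossing event.

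The cleanest route to the explicit value invokes Lupu's isomorphism (Theorem \ref{thm:Lupu}): under the coupling, $\{e\not\in\mathcal{O}(\tilde{\mathcal{L}}_{\frac{1}{2}})\}$ coincides with $\{\tilde{\varphi} \text{ vanishes somewhere on } e\}$, and conditionally on $|\tilde{\varphi}_{e_\pm}|=\sqrt{2L_{e_\pm}}$ and on signs the probability of this event is given by the Gaussian bridge reflection formula of Lemma \ref{lem:fki}: it equals $1$ when signs differ and $\exp(-4W_e\sqrt{L_{e_-}L_{e_+}})$ when signs agree. The lemma's formula $\exp(-2W_e\sqrt{L_{e_-}L_{e_+}})$ then appears by integrating over the conditional sign distribution of $\tilde{\varphi}_{e_\pm}$ given $\mathcal{L}_{\frac{1}{2}}$, which behaves as a two-spin Ising distribution with effective interaction $W_e\sqrt{L_{e_-}L_{e_+}}$.

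The main obstacle is identifying this effective sign distribution: whereas the FK--Ising picture from Lemma \ref{lem:fki} gives naive Ising interaction $2W_e\sqrt{L_{e_-}L_{e_+}}$ conditional on $|\varphi|$ alone, the additional conditioning on the discrete loop soup structure halves this, and verifying this halving on a single cable is the delicate bookkeeping step. One expects it to be the single-edge shadow of the "Current + Bernoulli = FK'' coupling recalled in Section \ref{randomcurrent}, and an alternative verification is possible by a direct computation with the Itô excursion measure and the distributions of $M_\pm$, which reduces the claim to a calculus identity on the cable.
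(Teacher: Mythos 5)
First, a point of context: the paper does not prove this lemma at all --- it is imported verbatim as Corollary 3.6 of \cite{Lupu2014LoopsGFF}, where it is established by excursion theory on the cable (the route you mention only as a fallback). Your ``cleanest route'' via signs does not work as stated, because both of its inputs are false, and they happen to cancel. Write $J=2W_e\sqrt{L_{e_-}L_{e_+}}$ and test the two-vertex graph with a single edge. Conditionally on $\mathcal{L}_{\frac{1}{2}}$ with no crossing of $e$, the two endpoints lie in the same cluster of $\tilde{\mathcal{L}}_{\frac{1}{2}}$ with probability $1-q$, where $q$ is exactly the conditional disconnection probability the lemma computes; since signs are uniform per cluster, the sign-agreement probability is $1-q/2=1-\frac{1}{2}e^{-J}$, which is \emph{not} the two-spin Ising-$J/2$ value $(1+e^{-J})^{-1}$ for any $J>0$. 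Likewise, the conditional probability of a zero on the cable given agreeing signs is $\frac{e^{-J}/2}{1-e^{-J}/2}$, not the bridge value $e^{-2J}$: Lemma \ref{lem:fki} applies when one conditions on the vertex values of $\varphi$ alone, and conditioning additionally on $N_e=0$ biases the law of the field inside the cable. Your identity $\bigl(e^{-J/2}+e^{J/2}e^{-2J}\bigr)/\bigl(e^{J/2}+e^{-J/2}\bigr)=e^{-J}$ is a formal coincidence of two compensating errors, not a proof. The route is also circular in two ways: the conditional sign law at $e_\pm$ given $\mathcal{L}_{\frac{1}{2}}$ is determined by the very cable-connection probability being computed; and within this paper the ``Current${}+{}$Bernoulli${}={}$FK'' coupling (Proposition \ref{RCFKIsing}) is \emph{deduced from} this lemma, so invoking it as input inverts the logical order (Lupu--Werner's independent discrete proof gives a distributional identity, but identifying the conditional law of $\mathcal{O}(\tilde{\mathcal{L}}_{\frac{1}{2}})$ given $\mathcal{L}_{\frac{1}{2}}$ with that specific Bernoulli mechanism is precisely the content of the lemma).

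The direct excursion computation you sketch as an alternative also has a genuine gap in its setup. The event $e\not\in\mathcal{O}(\tilde{\mathcal{L}}_{\frac{1}{2}})$ is that the occupation field of the soup vanishes somewhere on the cable, and this is \emph{not} equivalent to $M_-+M_+<\ell_e$: the soup also contains loops entirely inside the open cable, and with positive probability these cover the gap left by the endpoint excursions (their occupation field is $\frac{1}{2}$ times the square of an independent Dirichlet GFF on the interval, i.e.\ of a Brownian bridge, which can stay bounded away from zero on a prescribed interior subinterval). A correct argument must work with the full conditional occupation field --- two independent squared Bessel processes of dimension $0$ started from $2L_{e_-}$ and $2L_{e_+}$ at the two ends, together with the independent interior contribution --- and must also make precise the conditioning of the endpoint excursion processes on producing no crossing; this is exactly what Lupu's proof does, and it is where the exponent $2W_e\sqrt{L_{e_-}L_{e_+}}$ (half of the unconditioned bridge exponent) actually comes from. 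Your independence claim across cables, via the Poissonian product structure of the conditional soup, is the one part of the proposal that is essentially right.
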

This result gives rise, together with Theorem \ref{thm:Lupu}, to the following discrete version of Lupu's isomorphism, which is stated without any recourse to the metric graph induced by $\mathcal{G}$. 
\begin{definition}
\label{def:out}
Let $(\omega_{e})_{e\in E}\in\lbrace 0,1\rbrace^{E}$ be a percolation defined as follows: conditional on $\mathcal{L}_{1/2}$, the random variables 
$(\omega_{e})_{e\in E}$ are independent, and $\omega_{e}$ equals $0$ with conditional probability given by \eqref{cp}.

Let $\mathcal{O}_{+}(\mathcal{L}_{1/2})$ be the set of edges:
\begin{displaymath}
\mathcal{O}_{+}(\mathcal{L}_{1/2})=\mathcal{O}(\mathcal{L}_{1/2})
\cup \lbrace e\in E\vert \omega_{e}=1\rbrace.
\end{displaymath}
\end{definition}

\begin{theorem}[\textbf{Discrete version of Lupu's isomorphism}, Theorem 1 bis in \cite{Lupu2014LoopsGFF}]
\label{PropIsoLupuLoops}
Given a loop-soup $\mathcal{L}_{1/2}$, let $\mathcal{O}_{+}(\mathcal{L}_{1/2})$ be as in Definition \ref{def:out}.
Let $(\sigma_{x})_{x\in V}\in\lbrace -1,+1\rbrace^{V}$ be random spins taking constant values on
clusters induced by $\mathcal{O}_{+}(\mathcal{L}_{1/2})$ 
($\sigma_{e_{-}}=\sigma_{e_{+}}$ if $e\in \mathcal{O}_{+}(\mathcal{L}_{1/2})$) and such that the values on each cluster, conditional on $\mathcal{L}_{1/2}$ and $\mathcal{O}_{+}(\mathcal{L}_{1/2})$, are independent and uniformly distributed. Then
\begin{displaymath}
\left(\sigma_{x}\sqrt{2 L_{x}(\mathcal{L}_{1/2})}\right)_{x\in V}
\end{displaymath}
is a Gaussian free field distributed according to $P_{\varphi}$.
\end{theorem}

Theorem \ref{PropIsoLupuLoops}  induces the following coupling between FK-Ising and random currents. 

If $V$ is finite, a \textit{random current model} 
\cite{Aizenman82RCphi4,DC16RC}
on $\mathcal{G}$ with weights
$(J_{e})_{e\in E}$ is a random assignment to each edge $e$ of a non-negative integer
$\hat{n}_{e}$ such that for all $x\in V$,
\begin{displaymath}
\sum_{\substack{e\in E\\e~\text{adjacent to}~x}}\hat{n}_{e}
\end{displaymath}
is even, which is called the \textit{parity condition}. The probability of a configuration
$(n_{e})_{e\in E}$ satisfying the parity condition is
\begin{displaymath}
\mathbb{P}^{\rm RC}_{J}(\forall e\in E, \hat{n}_{e}=n_{e})=
\dfrac{1}{\mathcal{Z}^{\rm RC}_{J}}\prod_{e\in E}\dfrac{(J_{e})^{n_{e}}}{n_{e}!},
\end{displaymath}
where actually $\mathcal{Z}^{\rm RC}_{J}=\mathcal{Z}^{\rm Isg}_{J}$. Let
\begin{displaymath}
\mathcal{O}(\hat{n})=\lbrace e\in E\vert \hat{n}_{e}>0\rbrace.
\end{displaymath}
The open edges in $\mathcal{O}(\hat{n})$ induce clusters on the graph $\mathcal{G}$. 

Given a loop-soup $\mathcal{L}_{\alpha}$, we denote by 
$\mathsf{N}_{e}(\mathcal{L}_{\alpha})$ the number of times the loops in $\mathcal{L}_{\alpha}$ cross the nonoriented edge $e\in E$. The transience of the Markov jump process $X$ implies that
$\mathsf{N}_{e}(\mathcal{L}_{\alpha})$ is a.s. finite for all $e\in E$. If $\alpha=1/2$, we have the following identity (see for instance \cite{Werner2015,Lalwer2018Survey}):

\begin{theorem}[\textbf{Loop-soup and random current}]
\label{ThmLSRC}
Assume $V$ is finite and consider the loop-soup $\mathcal{L}_{1/2}$. Conditional on the occupation field $(L_{x}(\mathcal{L}_{1/2}))_{x\in V}$, 
$(\mathsf{N}_{e}(\mathcal{L}_{1/2}))_{e\in E}$ is distributed as a random current with weights 
$\left(2W_{e}\sqrt{L_{e_{-}}(\mathcal{L}_{1/2})L_{e_{+}}
(\mathcal{L}_{1/2})}\right)_{e\in E}$. If $\varphi$ is the GFF on $\mathcal{G}$ given by Le Jan's or Lupu's isomorphism, then these weights are
$(J_{e}(\vert\varphi\vert))_{e\in E}$.
\end{theorem}

Conditional on the occupation field 
$(L_{x}(\mathcal{L}_{1/2}))_{x\in V}$, 
$\mathcal{O}(\mathcal{L}_{1/2})$ are the edges occupied by a random current and 
$\mathcal{O}_{+}(\mathcal{L}_{1/2})$ the edges occupied by FK-Ising. 
Lemma \ref{lem:fki} and Theorem \ref{PropIsoLupuLoops} imply the following coupling, as noted by Lupu and Werner in 
\cite{lupu-werner}.

\begin{theorem}[\textbf{Random current and FK-Ising coupling}]
\label{RCFKIsing}
Assume $V$ is finite. Let $\hat{n}$ be a random current on $\mathcal{G}$ with weights
$(J_{e})_{e\in E}$. Let $(\omega_{e})_{e\in E}\in\lbrace 0,1\rbrace^{E}$ be an independent percolation, each edge being opened (value $1$) independently with probability 
$1-e^{-J_{e}}$. Then
\begin{displaymath}
\mathcal{O}(\hat{n})\cup\lbrace e\in E\vert \omega_{e}=1\rbrace
\end{displaymath}
is distributed like the open edges in an FK-Ising with weights 
$(1-e^{-2 J_{e}})_{e\in E}$.
\end{theorem}
 
\subsection{Generalized second Ray-Knight ``version'' of Lupu's isomorphism}\label{sec:glupu}
We are now in a position to state the coupled version of the second Ray-Knight theorem.
\begin{theorem}
\label{Lupu}
Let $x_{0}\in V$. Let be $(\varphi_{x}^{(0)})_{x\in V}$ with distribution $P_{\varphi}^{\lbrace x_{0}\rbrace,0}$, and define $\mathcal{O}(\varphi^{(0)})$ as in Definition \ref{def_FK-Ising}. 
Let $X$ be an independent Markov jump process started from $x_{0}$. 

Fix $u>0$. If $\tau_{u}^{x_{0}}<\zeta$, we 
let $\mathcal{O}_{u}$ be the random subset of $E$ which contains $\mathcal{O}(\varphi^{(0)})$, the edges used by the path $(X_{t})_{0\leq t\leq \tau_{u}^{x_{0}}}$, and additional edges $e$ opened conditionally independently with probability
\begin{displaymath}
1-e^{W_{e}\vert\varphi_{e_{-}}^{(0)}\varphi_{e_{+}}^{(0)}\vert - 
W_{e}\sqrt{(\varphi_{e_{-}}^{(0)2}+2\ell_{e_{-}}(\tau_{u}^{x_{0}}))
(\varphi_{e_{+}}^{(0)2}+2\ell_{e_{+}}(\tau_{u}^{x_{0}}))}}.
\end{displaymath}
We let $\sigma\in\lbrace -1,+1\rbrace^{V}$ be random spins sampled uniformly independently on each cluster induced by
$\mathcal{O}_{u}$, pinned at $x_0$, i.e. $\sigma_{x_0}=1$, and define
\begin{displaymath}
\varphi_{x}^{(u)}:=\sigma_{x}\sqrt{\varphi_{x}^{(0)2}+2\ell_{x}(\tau_{u}^{x_{0}})}.
\end{displaymath}

Then, conditional on $\tau_{u}^{x_{0}}<\zeta$, $\varphi^{(u)}$ has  distribution $P_{\varphi}^{\lbrace x_{0}\rbrace,\sqrt{2u}}$, and
$\mathcal{O}_{u}$ has distribution $\mathcal{O}(\varphi^{(u)})$ conditional on $\varphi^{(u)}$.
\end{theorem}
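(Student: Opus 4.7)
The plan is to realise the theorem's output $(\varphi^{(u)},\mathcal{O}_u)$ as the output of Proposition \ref{PropIsoLupuLoops} applied to a single loop soup $\mathcal{L}_{\frac{1}{2}}$ on $V$, conditioned on $L_{x_0}(\mathcal{L}_{\frac{1}{2}})=u$. Setting $D=V\setminus\{x_0\}$, decompose $\mathcal{L}_{\frac{1}{2}}=\mathcal{L}_{\frac{1}{2}}^{D}\sqcup\mathcal{L}_{\frac{1}{2}}^{(x_0)}$ into its independent parts of loops avoiding or touching $x_0$. The \emph{From excursions to loops} principle identifies $\mathcal{L}_{\frac{1}{2}}^{(x_0)}$, conditioned on $L_{x_0}=u$, with the trace on $V$ of $(X_t)_{0\le t\le\tau_u^{x_0}}$ under $\mathbb{P}_{x_0}(\cdot\mid\tau_u^{x_0}<\zeta)$. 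Applying Proposition \ref{PropIsoLupuLoops} to $\mathcal{L}_{\frac{1}{2}}^{D}$ (the loop soup of the chain on $D$ killed at $x_0$) yields a coupled GFF $\varphi^{(0)}=\sigma^{(0)}\sqrt{2L(\mathcal{L}_{\frac{1}{2}}^{D})}\sim P_\varphi^{\{x_0\},0}$, independent of $X$; by the discussion surrounding Lemma \ref{lem:fki}, the set $\mathcal{O}_+(\mathcal{L}_{\frac{1}{2}}^{D})$ has the correct conditional law given $\varphi^{(0)}$ and can be identified with $\mathcal{O}(\varphi^{(0)})$ under this coupling. The algebraic identity $L_x(\mathcal{L}_{\frac{1}{2}})=\varphi^{(0)2}_x/2+\ell_x(\tau_u^{x_0})$ makes $\sigma_x\sqrt{2L_x(\mathcal{L}_{\frac{1}{2}})}$ coincide with the $\varphi^{(u)}_x$ from the statement for any sign field $\sigma$.

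The key remaining task is to show that $\mathcal{O}_+(\mathcal{L}_{\frac{1}{2}})$ of Definition \ref{def:out} coincides in joint law with $\mathcal{O}_u$ given $(\varphi^{(0)},X)$. Both sets contain $\mathcal{O}(\mathcal{L}_{\frac{1}{2}})=\mathcal{O}(\mathcal{L}_{\frac{1}{2}}^{D})\cup\{\text{edges crossed by }X\}$, and the remaining edges are added by conditionally independent Bernoullis across $E$ in both constructions. For an edge $e$ outside the core set, Definition \ref{def:out} gives closing probability $\exp(-2W_e\sqrt{L_{e_-}(\mathcal{L}_{\frac{1}{2}})L_{e_+}(\mathcal{L}_{\frac{1}{2}})})$; on the theorem side $e\notin\mathcal{O}_u$ requires missing both the Bernoulli that upgrades $\mathcal{O}(\mathcal{L}_{\frac{1}{2}}^{D})$ to $\mathcal{O}_+(\mathcal{L}_{\frac{1}{2}}^{D})=\mathcal{O}(\varphi^{(0)})$ (closing probability $\exp(-W_e|\varphi^{(0)}_{e_-}\varphi^{(0)}_{e_+}|)$, since $2\sqrt{L_{e_\pm}(\mathcal{L}_{\frac{1}{2}}^{D})}=|\varphi^{(0)}_{e_\pm}|$) and the extra percolation of the theorem (closing probability $\exp(W_e|\varphi^{(0)}_{e_-}\varphi^{(0)}_{e_+}|-W_e\sqrt{(\varphi^{(0)2}_{e_-}+2\ell_{e_-})(\varphi^{(0)2}_{e_+}+2\ell_{e_+})})$). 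Using $\varphi^{(0)2}_{e_\pm}+2\ell_{e_\pm}(\tau_u^{x_0})=2L_{e_\pm}(\mathcal{L}_{\frac{1}{2}})$, the product of these two probabilities telescopes to exactly the loop-soup closing probability.

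Having identified the edge sets under the coupling, both constructions are completed in the same way by a uniform sign assignment on clusters with $\sigma_{x_0}=+1$. Proposition \ref{PropIsoLupuLoops} applied to $\mathcal{L}_{\frac{1}{2}}$, conditioned on $\{L_{x_0}(\mathcal{L}_{\frac{1}{2}})=u,\ \sigma_{x_0}=+1\}$, then identifies $\varphi^{(u)}$ with a $P_\varphi^{\{x_0\},\sqrt{2u}}$-distributed field and shows that, conditionally on $\varphi^{(u)}$, the set $\mathcal{O}_u=\mathcal{O}_+(\mathcal{L}_{\frac{1}{2}})$ has the law $\mathcal{O}(\varphi^{(u)})$ prescribed by Lemma \ref{lem:fki} and Definition \ref{def_FK-Ising}. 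The main obstacle will be the rigorous disintegration with respect to the null event $\{L_{x_0}(\mathcal{L}_{\frac{1}{2}})=u\}$, handled via the known density of $L_{x_0}$ under the loop-soup measure, and the passage from the finite, singly-killed case to arbitrary transient chains with general killing measure $\kappa$, which requires a finite-truncation limit argument in the spirit of the proof of Proposition \ref{PropKillingCase} announced in the introduction.
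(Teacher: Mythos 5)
Your proposal is correct and follows essentially the same route as the paper's own proof: both decompose the loop soup at $x_0$, identify the loops through $x_0$ with the excursion process $(X_t)_{t\le\tau_u^{x_0}}$ (Proposition \ref{PropPD}), couple the part avoiding $x_0$ with $\varphi^{(0)}$ via Lupu's isomorphism, and check that the two-stage Bernoulli closing probabilities telescope to the one-stage probability of Definition \ref{def:out} — the paper carries out this last computation as a conditional-probability ratio on the cable graph using Lemma \ref{36}, which is the same arithmetic as your telescoping product. (One minor slip: the identity is $\sqrt{2L_{e_\pm}(\mathcal{L}_{\frac{1}{2}}^{D})}=\vert\varphi^{(0)}_{e_\pm}\vert$ rather than $2\sqrt{L_{e_\pm}(\mathcal{L}_{\frac{1}{2}}^{D})}=\vert\varphi^{(0)}_{e_\pm}\vert$, but the consequence you actually use, $2\sqrt{L_{e_-}(\mathcal{L}_{\frac{1}{2}}^{D})L_{e_+}(\mathcal{L}_{\frac{1}{2}}^{D})}=\vert\varphi^{(0)}_{e_-}\varphi^{(0)}_{e_+}\vert$, is correct.)
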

\begin{remark}
One consequence of that coupling is that the path $(X_{s})_{s\le \tau_{u}^{x_{0}}}$ stays in the 
positive connected component of ${x_0}$ for $\varphi^{(u)}$. This yields a coupling between the range
of the Markov chain and the sign component of $x_{0}$ inside a GFF $P_{\varphi}^{\lbrace x_{0}\rbrace,\sqrt{2u}}$.
\end{remark}

\noindent{\it Proof of Theorem \ref{Lupu}:~}
The proof is based on
\cite{Lupu2014LoopsGFF}.  Let $D=V\setminus\{x_0\}$, and let $\tilde{\mathcal{L}}_{1/2}$ be the loop-soup of intensity $1/2$ on the metric graph $\tilde{\mathcal{G}}$, which we decompose into $\tilde{\mathcal{L}}_{1/2}^{(x_0)}$ (resp. $\tilde{\mathcal{L}}_{1/2}^{D}$) the loop-soup hitting (resp. not hitting) $x_0$, which are independent. We let $\mathcal{L}_{1/2}$ and $\mathcal{L}_{1/2}^{(x_0)}$ (resp.  $\mathcal{L}_{1/2}^{D}$) be the prints of these loop-soups on $V$ (resp. on $D=V\setminus\{x_0\}$). We condition on $L_{x_0}(\mathcal{L}_{1/2})=u$. 

Theorem \ref{thm:Lupu} implies (recall also Definition \ref{def_FK-Ising}) that we can couple $\tilde{\mathcal{L}}_{1/2}^{D}$ with $\varphi^{(0)}$ so that 
$L_x(\mathcal{L}_{1/2}^{D})=\varphi_x^{(0)2}/2$ for all $x\in V$, and 
$\mathcal{O}(\tilde{\mathcal{L}}_{1/2})=\mathcal{O}(\varphi^{(0)})$. 

Define 
$\varphi^{(u)}=(\varphi^{(u)}_x)_{x\in V}$ from 
$\tilde{\mathcal{L}}_{1/2}$ by, for all $x\in V$, 
\begin{equation*}
\label{abs}
|\varphi_x^{(u)}|=\sqrt{2L_x(\mathcal{L}_{1/2})}
\end{equation*}
and $\varphi_x^{(u)}=\sigma_x|\varphi_x^{(u)}|$, where $\sigma\in\{-1,+1\}^V$ are random spins sampled uniformly independently on each cluster induced by $\mathcal{O}(\tilde{\mathcal{L}}_{1/2})$, pinned at $x_0$,  i.e. $\sigma_{x_0}=1$. Then, by Theorem \ref{thm:Lupu}, 
$\varphi^{(u)}$ has distribution $P_{\varphi}^{\lbrace x_{0}\rbrace,\sqrt{2u}}$.

For all $x\in V$, we have
$$L_x(\tilde{\mathcal{L}}_{1/2})=\dfrac{1}{2}\varphi_x^{(0)2}+L_x(\mathcal{L}_{1/2}^{(x_0)}).$$

On the other hand, conditional on $L_.(\mathcal{L}_{1/2})$,
\begin{align*}
&\mathbb{P}(e\not\in\mathcal{O}(\tilde{\mathcal{L}}_{1/2})\,|\,
e\not\in\mathcal{O}(\tilde{\mathcal{L}}_{1/2}^D)\cup\mathcal{O}(\mathcal{L}_{1/2}))
=\frac{\mathbb{P}(e\not\in\mathcal{O}(\tilde{\mathcal{L}}_{1/2}))}{\mathbb{P}(e\not\in\mathcal{O}(\tilde{\mathcal{L}}_{1/2}^D)\cup\mathcal{O}(\mathcal{L}_{1/2}))}
\\&=
\frac{\mathbb{P}(e\not\in\mathcal{O}(\tilde{\mathcal{L}}_{1/2})\,|\,
e\not\in\mathcal{O}(\mathcal{L}_{1/2}))}{\mathbb{P}(e\not\in\mathcal{O}(\tilde{\mathcal{L}}_{1/2}^D)\,|\,
e\not\in\mathcal{O}(\mathcal{L}_{1/2}))}
=\frac{\mathbb{P}(e\not\in\mathcal{O}(\tilde{\mathcal{L}}_{1/2})\,|\,
e\not\in\mathcal{O}(\mathcal{L}_{1/2}))}{\mathbb{P}(e\not\in\mathcal{O}(\tilde{\mathcal{L}}_{1/2}^D)\,|\,
e\not\in\mathcal{O}(\mathcal{L}_{1/2}^D))}\\&
=\exp\left(-W_e\sqrt{L_{e_-}(\mathcal{L}_{1/2})L_{e_+}(\mathcal{L}_{1/2})}
+W_e\sqrt{L_{e_-}(\mathcal{L}_{1/2}^D)L_{e_+}(\mathcal{L}_{1/2}^D)}\right),
\end{align*}
where we use in the third equality that the event $e\not\in\mathcal{O}(\tilde{\mathcal{L}}_{1/2}^D)$ is measurable with respect to the $\sigma$-field generated by $\tilde{\mathcal{L}}_{1/2}^D$, which is independent of $\tilde{\mathcal{L}}_{1/2}^{(x_0)}$, and where we use Lemma \ref{36} in the fourth equality, for $\tilde{\mathcal{L}}_{1/2}$ and for $\tilde{\mathcal{L}}_{1/2}^D$. 

We conclude the proof by observing that  $\mathcal{L}_{1/2}^{(x_0)}$ conditional on $L_{x_0}(\mathcal{L}_{1/2}^{(x_0)})=u$ has the law of the occupation field of the Markov chain $\ell(\tau_{u}^{x_{0}})$
under $\mathbb{P}_{x_{0}}(\cdot \vert \tau_{u}^{x_{0}}<\zeta)$.
{\hfill $\Box$}

\newpage

\subsection{A diagram to summarize the models and the couplings and relations}
\label{sec_diagram}
Next diagram summarizes the preceding:

\begin{figure}[H]
\centering
\caption{Couplings between discrete and metric graph GFFs, loop-soups, and their relations to the spin Ising, the FK-Ising and the Ising random current. 
The first column corresponds to metric graph GFF, the second to discrete GFF and the third to Ising. 
Straightforward links are not labeled.}


\begin{tikzpicture}
  \matrix (m)
    [
      matrix of nodes,
      column sep      = 5em,
      row sep         = 10ex,
      column 1/.style = { nodes = 
      { state, align=center, text width=2.75cm}  },
      column 2/.style = { nodes = 
      { state, align=center, text width=2.75cm} },
      column 3/.style = { nodes = 
      { state, align=center, text width=2.7cm} },
    ]
    { {squared metric \\ GFF $(\tilde{\varphi}_{x}^{2})
    	_{x\in\tilde{\mathcal{G}}}$}
     & {squared discrete \\ GFF $(\varphi_{x}^{2})_{x\in V}$}
     & \\
      {metric graph \\ 
      GFF $(\tilde{\varphi}_{x})_{x\in\tilde{\mathcal{G}}}$}  
      & {discrete \\ GFF $(\varphi_{x})_{x\in V}$} 
      & {Ising \\ spins $(\hat{\sigma}_{x})_{x\in V}$} \\
      {sign\\ clusters \\ 
      $\ooo(\tilde{\varphi})=\ooo(\tilde{\mathcal{L}}_{1/2})$} 
      & {enlarged \\ clusters \\$\ooo_{+}(\mathcal{L}_{1/2})$}  
      & {FK-Ising \\ random cluster \\ model} \\
      & {discrete loop \\ clusters 
      $\ooo(\mathcal{L}_{1/2})$}
      & {random current \\ clusters $\ooo(\hat{n})$} \\
      & {number of \\ edge crossings 
      $(\mathsf{N}_{e}(\mathcal{L}_{1/2}))_{e\in E}$} 
      & {Ising \\ random \\ current $(\hat{n}_{e})_{e\in E}$} 
      \\
      {metric graph \\ loop-soup $\tilde{\mathcal{L}}_{1/2}$} 
      & {discrete \\ loop-soup $\mathcal{L}_{1/2}$}
      &  \\
    };
    \draw [arrow] (m-2-1) -- (m-1-1); 
    \draw [arrow] ([xshift=-1.4cm]m-2-1.south east) -- ([xshift=-1.4cm]m-3-1.north east);
    \draw [arrow] ([xshift=1.4cm]m-3-1.north west) --
    node[anchor=east,left, align=center]
    {i.i.d. \\ unif. \\ signs} 
    ([xshift=1.4cm]m-2-1.south west);
    \draw [arrow] (m-6-1) -- 
    node[anchor=west,right]{clusters} (m-3-1);
    \draw [arrow] (m-2-2) -- (m-1-2);
    \draw [arrow] ([xshift=-1.4cm]m-2-2.south east) --
   node[anchor=west,right,xshift=-0.2em,yshift=-0.3em]
   {Lem. \ref{lem:fki}} ([xshift=-1.4cm]m-3-2.north east);
    \draw [arrow] ([xshift=1.4cm]m-3-2.north west) -- 
    node[anchor=east,left, align=center]
    {i.i.d. \\ unif. \\ signs} 
    ([xshift=1.4cm]m-2-2.south west);
    \draw [arrow] (m-4-2) -- 
     node[anchor=east,left]{Lem. \ref{36}} (m-3-2);
    \draw [arrow] (m-5-2) -- node[anchor=east,left,align=right]
    {$\mathsf{N}_{e}(\mathcal{L}_{1/2})$ \\ $>0$} (m-4-2);
    \draw [arrow] (m-6-2) -- (m-5-2);
    \draw [darrow] (m-2-3) -- 
    node[anchor=west,right]{Thm. \ref{FK-Ising}} (m-3-3);
    \draw [arrow] (m-4-3) --  
    node[anchor=west,right]{Thm. \ref{RCFKIsing}} (m-3-3);
    \draw [arrow] (m-5-3) -- 
    node[anchor=west,right]{$\hat{n}_{e}>0$} (m-4-3);
    \draw [arrow] (m-1-1) -- node[anchor=north,below, align=center]{restr. \\ to $V$} (m-1-2);
    \draw [arrow] (m-2-1) -- node[anchor=north,below, align=center]{restr. \\ to $V$} (m-2-2);
    \draw [darrow] (m-3-1) -- 
    node[anchor=north,below,xshift=-0.2em]
    {equal} (m-3-2);
    \draw [arrow] (m-6-1) -- node[anchor=north,below, align=center]{trace \\ on $\mathcal{G}$} (m-6-2);
    \draw [arrow] (m-2-2) -- 
    node[anchor=north,below, align=center,xshift=0.3em]
    {sign$(\varphi)$ \\ cond. \\ on $\vert\varphi\vert$} (m-2-3);
    \draw [arrow] (m-3-2) -- 
    node[anchor=north,below, align=center,xshift=0.3em]
    {cond. \\ on $\vert\varphi\vert$} (m-3-3);
    \draw [arrow] (m-4-2) -- node[anchor=north,below,xshift=0.3em,yshift=-0.1em]
    {Thm. \ref{ThmLSRC}} (m-4-3);
    \draw [arrow] (m-5-2) -- node[anchor=north,below,xshift=0.3em,yshift=-0.1em]
    {Thm. \ref{ThmLSRC}} (m-5-3);
    \draw [dashow] (m-6-1.west) to[bend left=6] node[anchor=west,right]{Thm. \ref{thm:Lupu}} (m-2-1.west);
    \draw [dashow] (m-6-2.west) to[bend left=6]  node[anchor=east,left]{Thm. \ref{PropIsoLupuLoops}}(m-2-2.west);
    \draw [dashow] (m-6-2.east) to[bend right=5.5] node[anchor=west,right]{Thm. \ref{ThmIsoLeJan}} (m-1-2.east);
\end{tikzpicture}
\label{diagram}
\end{figure}

\newpage

\section{Inversion of the signed isomorphism}
\label{sec:inversion}
In \cite{SabotTarres2015RK}, Sabot and Tarrès give a new proof of the generalized second Ray-Knight theorem together with a construction that inverts the coupling between the square of a GFF conditional its value at a vertex $x_{0}$ and the excursions of the jump process $X$ from and to $x_{0}$. 
In this paper we are interested in inverting the coupling of Theorem \ref{Lupu} with the signed GFF : more precisely, we want to describe the law of
$(X_t)_{0\le t\le \tau_u^{x_0}}$ conditional on $\varphi^{(u)}$. 

We present in Section \ref{sec_Poisson}
an inversion involving an extra Poisson process. We provide in Section \ref{sec_dicr_time} a discrete-time description of the process and in Section \ref{sec_jump} an alternative description via jump rates.  Sections \ref{sec:lejaninv} 
is dedicated to a signed inversion of Le Jan's isomorphism for loop-soups.

\subsection{A description via an extra Poisson point process}\label{sec_Poisson}

Let $(\check \varphi_x)_{x\in V}$ be a real function on $V$ such that
$\check\varphi_{x_0}=+\sqrt{2u}$ for some $u>0$. Set 
$$
\check \Phi_x=\vert\check\varphi_x\vert,~~
\sigma_x=\operatorname{sign}(\check\varphi_x).
$$
We define a self-interacting process $(\check X_t, (\check n_e(t))_{e\in E})$ living on $V\times {\mathbb{N}}^E$ as follows.
The process $\check X$ starts at $\check X(0)=x_0$.
For $t\ge 0$, we set
$$
\check\Phi_x(t)=\sqrt{(\check\Phi_x)^2-2\check\ell_x(t)},~\forall x\in V,\qquad
J_e(\check\Phi(t))=W_e \check\Phi_{e_-}(t)\check\Phi_{e_+}(t), ~ \forall e\in E.
$$
where $\check\ell_x(t)=\int_0^t\indic_{\{\check X_s=x\}}ds$ is the local time of the process $\check X$ up to time $t$.
Let $(N_e(v))_{v\ge 0}$ be an independent Poisson Point Processes on $\R_+$ with intensity 1, for each edge $e\in E$.
We set
$$
\check n_e(t)=
\begin{cases} 
N_e(2 J_e(\check\Phi(t))), &\hbox{ if } \sigma_{e_-}\sigma_{e_+}=+1,
\\
0, &\hbox{ if } \sigma_{e_-}\sigma_{e_+}=-1.
\end{cases}
$$
Given $(n_{e})_{e\in E}\in \N^{E}$ non-negative integer weights on edges, we will denote
\begin{displaymath}
\ccc(n)=\lbrace e\in E\vert n_{e}>0\rbrace.
\end{displaymath}
We consider the edges in $\ccc(n)$ as ``open'', and they naturally induce clusters. So $\ccc(\check n(t))\subset E$ denotes the configuration of edges such that $\check n_e(t)>0$.
As time increases, the interaction parameters $J_{e}(\check\Phi(t))$ decreases for the edges neighboring $\check X_t$, and at some random times $\check n_e(t)$ may drop
by 1.
The process $(\check X_t)_{t\ge 0}$ is defined as the process that jumps only at the times when one of the $\check n_e(t)$ drops by 1, as follows:
\begin{itemize}
\item
if $\check n_e(t)$ decreases by 1 at time $t$, but does not create a new cluster in $\ccc(\check n(t))$, then $\check X_t$ crosses the edge
$e$ with probability ${1/2}$ or does not move with probability ${1/2}$;
\item
if $\check n_e(t)$ decreases by 1 at time $t$, and does create a new cluster in $\ccc(\check n(t))$,
 then $\check X_t$ moves/or stays with probability 1 on the unique extremity
of $e$ which is in the cluster of the origin $x_0$ in the new configuration.
\end{itemize}
We set
$$
\check T:=\inf\{t\ge 0\vert\exists x\in V,~\text{s. t.}~ \check\Phi_x(t)=0\},
$$
clearly, the process is well-defined up to time $\check T$.
\begin{proposition}
For all $0\le t\le \check T$, $\check X_t$ is in the connected component of $x_0$ of the configuration $\ccc(\check n(t))$. If $V$ is finite,
the process ends at $x_0$, i.e. $\check X_{\check T}=x_0$.
\end{proposition}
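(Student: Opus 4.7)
The statement consists of two assertions: the cluster invariance, that $\check X_t$ always lies in the component of $x_0$ in $\check \ccc(t)$, and the terminal position $\check X_{\check T}=x_0$ when $V$ is finite.

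My plan is to establish the invariance by induction on the successive drop times of the counters $\check n_e$. Between two consecutive drops, both $\check X_t$ and $\check \ccc(t)$ are constant, so it is enough to verify preservation across each drop. The starting observation is that $J_e(t)=W_e\check\Phi_{e_-}(t)\check\Phi_{e_+}(t)$ depends only on the local times at $e_-$ and $e_+$, hence is constant whenever $\check X_t\notin\{e_-,e_+\}$. Consequently, a.s.\ any drop of $\check n_e$ forces $\check X_{t^-}\in\{e_-,e_+\}$, and by independence of the Poisson processes $(N_e)_{e\in E}$ no two drops occur simultaneously. If the drop is from a value $\geq 2$, or from $1$ to $0$ without disconnecting the cluster of $\check X_{t^-}$, then $e_-$ and $e_+$ still belong to the same cluster of $\check \ccc(t)$, and the ``cross-or-stay with probability $1/2$'' rule trivially preserves the invariance. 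If the drop from $1$ to $0$ does disconnect the cluster, the prescription of moving $\check X_t$ to whichever endpoint of $e$ now belongs to the cluster of $x_0$ is precisely what is needed to preserve the invariance.

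For the terminal statement assume $V$ finite; then $\check T<+\infty$ since $t=\sum_{x\in V}\check\ell_x(t)\leq \sum_{x\in V}\check\Phi_x^2/2<+\infty$. Pick any $x^*$ with $\check\Phi_{x^*}(\check T)=0$. For each edge $e$ adjacent to $x^*$ the smallest atom $p_1^{(e)}$ of $N_e$ is strictly positive a.s., so $\check n_e(t)=N_e(2J_e(t))=0$ as soon as $2J_e(t)<p_1^{(e)}$; since $J_e(t)\to 0$ continuously as $t\to \check T^-$, this occurs at a time $t_e<\check T$. Setting $t_*:=\max_e t_e$ over the finitely many edges adjacent to $x^*$, one gets $t_*<\check T$ and $x^*$ isolated in $\check \ccc(t)$ for every $t\in[t_*,\check T)$. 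The main obstacle is now to combine the invariance with this isolation. One verifies $\check\Phi_{x^*}(t_*)>0$: at time $t_*$ some $2J_e(t_*)$ equals a strictly positive atom of $N_e$, forcing $\check\Phi_{x^*}(t_*)>0$ (the degenerate case where no adjacent $\check n_e$ has ever been positive means $x^*$ is isolated from time $0$, and the invariance together with $\check X_0=x_0$ then forces $x^*=x_0$ directly). Since $\check\Phi_{x^*}(\check T)=0$, $\check\ell_{x^*}$ strictly increases on $[t_*,\check T)$, so $\check X_t=x^*$ on a set of positive Lebesgue measure there. By the invariance, $x^*$ lies in the cluster of $x_0$ in $\check \ccc(t)$; since that cluster equals $\{x^*\}$ by isolation, one concludes $x^*=x_0$. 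Finally, once $\check X$ has arrived at $x_0=x^*$ after $t_*$, no adjacent $\check n_e$ can drop any further, so $\check X$ stays at $x_0$ up to $\check T$.
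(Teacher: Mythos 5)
Your proof is correct, and it supplies an argument where the paper gives none: the proposition is stated in Section \ref{sec_Poisson} without proof, treated as evident from the construction. It is worth noting, though, that both of your ingredients have exact counterparts buried inside the proof of Theorem \ref{thm-Poisson2}. The cluster-invariance is encoded there in the indicator $\indic_{\{\tilde X_t\in \ccc(x_0,\tilde n(t))\}}$ carried by the density $\tilde M_t$ of Lemma \ref{RN}, and your isolation argument at $\check T$ (the first atom of $N_e$ is a.s.\ strictly positive, so every edge at the dying vertex closes strictly before $\check T$) is the pathwise version of the observation following \eqref{M-T}: there, $\tilde\Phi_x(\tilde T)=0$ forces $J_e(\tilde\Phi(\tilde T))=0$ on adjacent edges, so that $\tilde M_{\tilde T}$ vanishes unless the terminal vertex is a singleton cluster containing $x_0$. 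The difference is that the paper's mechanism operates through a change of measure and yields the conclusion only under the law $\check E_{x_0,\check\Phi,\check n}$ (Lemma \ref{RN}(i)), whereas your argument is sample-path and valid for an arbitrary deterministic initial $\check\varphi$, which is what the proposition actually asserts; your induction over drop times, using that drops occur only at edges adjacent to the current position and never simultaneously, is the natural way to make the assertion rigorous, and it also establishes en route that the ``unique extremity of $e$ in the cluster of $x_0$'' in the jump rule is well defined (before a disconnecting drop, both endpoints lie in the $x_0$-cluster by the induction hypothesis).

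Two minor points. First, your verification that $\check\Phi_{x^*}(t_*)>0$ via atoms of $N_e$ is superfluous: $t_*<\check T$ already gives $\check\Phi_x(t_*)>0$ for every $x$, by the definition of $\check T$ as the first vanishing time (and if some $\check\Phi_x=0$ initially, then $\check T=0$ and the terminal claim is trivial). Second, you should say explicitly that the induction is well founded: for finite $V$ the total number of drops is a.s.\ bounded by $\sum_{e\in E}\check n_e(0)<+\infty$; in the infinite case (where only the first assertion applies) the drop times remain discrete before any accumulation because each vertex has finite degree, so the invariance holds at every time at which the process is defined, which matches the paper's phrasing.
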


\begin{theorem}
\label{thm-Poisson}
Assume that $V$ is finite.
With the notation of Theorem \ref{Lupu}, conditional on 
$\varphi^{(u)}=\check\varphi$, $(X_{t})_{t\le \tau_{u}^{x_{0}}}$ has the law
of $(\check X_{\check T-t})_{0\le t\le \check T}$.

Moreover, conditional on  $\varphi^{(u)}=\check\varphi$, $(\varphi^{(0)},\mathcal{O}(\varphi^{(0)}))$ has the law of
$(\sigma'\check\Phi(\check T), \ccc(\check n(\check T)))$ where 
$(\sigma'_x)_{x\in V}\in \lbrace -1,+1\rbrace^{V}$ are random spins sampled uniformly independently on 
each cluster induced by $\ccc(\check n(\check T))$,
with the condition that $\sigma'_{x_0}=+1$.

If $V$ is infinite, then $P_{\varphi}^{\lbrace x_{0}\rbrace, \sqrt{2u}}$-a.s.,
$\check X_t$ (with the initial condition $\check\varphi=\varphi^{(u)}$)
ends at $x_0$, i.e. $\check T<+\infty$ and $\check X_{\check T}=x_0$. 
All previous conclusions for the finite case still hold.
\end{theorem}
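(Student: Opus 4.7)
The plan is to first reduce to the case of a finite graph $V$ with killing supported on $x_{0}$ alone; the extension to infinite $V$ or more general killing follows by an exhaustion/absorption argument of the kind used to deduce Theorem \ref{Lupu} from its $\kappa$-free version. So I focus on a finite $V$ and the setting of Theorem \ref{Lupu}. My starting point is the coupled description provided there: conditionally on $\varphi^{(u)} = \check\varphi$, the triple $(X_\cdot, \varphi^{(0)}, \mathcal{O}(\varphi^{(0)}))$ is encoded by the decomposition $\tilde{\mathcal{L}}_{\frac{1}{2}} = \tilde{\mathcal{L}}_{\frac{1}{2}}^D \sqcup \tilde{\mathcal{L}}_{\frac{1}{2}}^{(x_0)}$, with $\tilde{\mathcal{L}}_{\frac{1}{2}}^{(x_0)}$ identified with the Markov excursion $(X_s)_{0\le s\le \tau_u^{x_0}}$ given $L_{x_0}(\mathcal{L}_{\frac{1}{2}}^{(x_0)})=u$. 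The key idea is to enlarge this by attaching the random current counts $(N_e(\mathcal{L}_{\frac{1}{2}}))_{e\in E}$: by the ``Loop soup and random current'' identity, they are conditionally Poisson, and subtracting the contribution of $\tilde{\mathcal{L}}_{\frac{1}{2}}^D$ (which is a function of $\varphi^{(0)}$) produces the conditional law of $N_\cdot(\mathcal{L}_{\frac{1}{2}}^{(x_0)})$ that I need to match with $\check n_\cdot$.

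The second step is to identify the inverse process with the time reversal of this enlarged forward process. Parametrising forward time as $s = \tau_u^{x_0} - t$, one has $\varphi^{(u)2}_x - 2(\ell_x(\tau_u^{x_0}) - \ell_x(s)) = \check\Phi_x(t)^2$, and conditionally on the revealed past the number of not-yet-performed crossings of $e$ by $\mathcal{L}_{\frac{1}{2}}^{(x_0)}$ is Poisson with intensity equal to $2J_e(\check\Phi(t))$ (the factor $2$ coming from the parity structure of the local random current conditioned on sign-agreement of $\check\varphi$ across $e$). Hence the Poisson clock $N_e(2J_e(t))$ of the inverse process is, after time reversal, in bijection with the edge-crossing times of $X$. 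The case-by-case rule by which $\check X$ reacts to a Poisson event is then dictated by the structure of random current representations: a decrement of $\check n_e(t)$ that does not alter the cluster partition of $\check\ccc(t)$ corresponds to removing an ``interior'' crossing whose two orientations are a priori exchangeable by a sign-symmetry argument, giving the Bernoulli$(1/2)$ rule, while a decrement that creates a new cluster corresponds to the last crossing of $e$ in the excursion process, which the forward walker necessarily used to enter the newly disconnected component containing $x_0$, hence the deterministic second rule.

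The final step is to read off the terminal data: since $V$ is finite, the forward walk performs finitely many crossings up to $\tau_u^{x_0}$, so $\check T < \infty$; at $\check T$ one has $\check\Phi(\check T) = |\varphi^{(0)}|$ and $\check\ccc(\check T) = \mathcal{O}(\varphi^{(0)})$ by construction, and $\check X_{\check T} = x_0$ because the forward walk starts from $x_0$. Resampling uniform independent signs on each cluster of $\check\ccc(\check T)$ with $\sigma'_{x_0} = +1$ reconstructs $\varphi^{(0)}$, since conditionally on $(|\varphi^{(0)}|, \mathcal{O}(\varphi^{(0)}))$ the signs of $\varphi^{(0)}$ are independent uniform on $\mathcal{O}(\varphi^{(0)})$-clusters (by applying Theorem \ref{thm:Lupu} to $\tilde{\mathcal{L}}_{\frac{1}{2}}^D$ and using that the cluster of $x_0$ is reduced to $\{x_0\}$ since $\varphi^{(0)}_{x_0}=0$). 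I expect the main obstacle to be a rigorous verification of the two-case rule for $\check X$: this is precisely where the enlargement by the random current pays off by replacing the intricate Ising two-point ratios underlying the Sabot--Tarr\`es inversion \cite{SabotTarres2015RK} by the clean Bernoulli/deterministic dichotomy above. Establishing it will require carefully computing the infinitesimal generator of $(\check X_t, \check n_\cdot(t), \check\Phi(t))$ at each Poisson event and comparing it with the generator derived from Sabot--Tarr\`es combined with the couplings of Sections \ref{fkising}--\ref{randomcurrent}.
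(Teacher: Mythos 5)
Your overall strategy --- enlarge the state space by edge-crossing counts, time-reverse, and identify the reversed process with $\check X$ via a generator computation --- is indeed the skeleton of the paper's proof: Section \ref{sec:proof} introduces the enlarged forward process $(X_t,(n_e(t)))$ with $n_e(t)=n_e(0)+N_e(J_e(\Phi(t)))-N_e(J_e(\Phi))+K_e(t)$ and proves Theorem \ref{thm-Poisson2} by time reversal. However, two of your intermediate claims are incorrect as stated. First, the ``Loop soup and random current'' identity does not make $(N_e(\mathcal{L}_{1/2}))_{e\in E}$ conditionally \emph{Poisson}: conditionally on the occupation field these counts form a random current, i.e.\ Poisson weights constrained by the parity condition at every vertex, and the edges are not independent. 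Converting this parity-constrained law into genuinely independent Poisson stacks is exactly what costs the cluster factor $2^{\#\ccc(n)-1}$ and the summation over sign configurations in the paper's Lemma \ref{distrib-phi-n}; your parenthetical about ``the factor $2$ coming from the parity structure'' gestures at this, but your assertion that, given the revealed past of the reversal, the not-yet-performed crossings are Poisson with mean $2J_e(\check\Phi(t))$ is precisely the statement to be proven, not an available input. Second, $N_e(\tilde{\mathcal{L}}_{\frac{1}{2}}^D)$ is \emph{not} a function of $\varphi^{(0)}$: the field determines the occupation field and (in law) the openings $\mathcal{O}(\varphi^{(0)})$, but not the integer crossing counts of the $D$-soup, so ``subtracting the contribution of $\tilde{\mathcal{L}}_{\frac{1}{2}}^D$'' is not legitimate in the form you use it. Note also that in the paper's forward process the extra Poisson clock runs at speed $J_e(\Phi(t))$, not $2J_e(\Phi(t))$ (see the remark following its definition); reconciling this factor of $2$ with the inverse clock $N_e(2J_e(\check\Phi(t)))$ is itself a consequence of the reweighting and is invisible at the level of your heuristic.

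The genuine gap is the step you acknowledge but do not execute. The paper does not obtain the two-case rule by comparison with the Sabot--Tarr\`es jump rates; it proves everything self-containedly (re-deriving Theorem \ref{Lupu} in the process) through three lemmas: the explicit joint density of $(\vert\varphi^{(0)}\vert, n(0))$ carrying the weight $2^{\#\ccc(n)-1}$ (Lemma \ref{distrib-phi-n}); a change of variables under time reversal with Jacobian $\prod_{x}\tilde\Phi_x/\tilde\Phi_x(\tilde T)$ (Lemma \ref{change-var}); and, crucially, the Doob $h$-transform identity $\Theta^{-1}\tilde L(\Theta f)=\check L f$ (Lemma \ref{RN}), where $\Theta$ contains $2^{\#\ccc(n)-1}$ together with the indicators that the walker lies in the cluster of $x_0$ and that all stacks are nonnegative. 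The Bernoulli-$1/2$ versus deterministic dichotomy then falls out of how the ratios $\Theta(y,\Phi,n-\delta_{\{x,y\}})/\Theta(x,\Phi,n)$ split along the events $\aaa_1,\aaa_2,\aaa_3$; in particular $\check X_{\check T}=x_0$ is not ``automatic because the forward walk starts from $x_0$'' but follows from the argument that $\tilde M_{\tilde T}$ vanishes off that event. Without this computation --- or an executed substitute deriving the rates from \cite{SabotTarres2015RK}, which would additionally require proving that the current-augmented process disintegrates correctly over the Sabot--Tarr\`es inversion, something the paper never needs --- your text describes the theorem rather than proves it. Your reduction of the general case is in the paper's spirit (Propositions \ref{PropKillingCase} and \ref{PropInfiniteCase} use an $h$-transform with $h(x)=\mathbb{P}_{x}(X~\text{hits}~x_{0}~\text{before}~\zeta)$ plus a time change, and a finite exhaustion with an absorbing vertex), but for infinite $V$ the assertions $\check T<+\infty$ and $\check X_{\check T}=x_0$ hold $P_{\varphi}^{\lbrace x_{0}\rbrace,\sqrt{2u}}$-a.s.\ only after the limiting argument, not by a one-line remark.
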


\begin{remark}
For the process above, one could also take 
$(\check{\Phi}_{x})_{x\in V}$ deterministic and 
$(\sigma_{x})_{x\in V}$ random, distributed as an Ising model with interaction constants
$J_{e}(\check{\Phi}(0))$. Then the process
$(\check X_{t},\check{\Phi}(t))_{0\le t\le \check T}$, averaged out by the law of $(\sigma_{x})_{x\in V}$ and the evolution of $(\check n_e(t))_{e\in E}$, is exactly the same as the process introduced in
\cite{SabotTarres2015RK}, inverting the Ray-Knight identity for the square of the GFF (without the sign). Indeed, both processes (here and in \cite{SabotTarres2015RK}), when we average out by 
$\check{\Phi}(0)=\check{\Phi}$ random, distributed as 
$\vert\varphi^{(u)}\vert$ under 
$P_{\varphi}^{\lbrace x_{0}\rbrace,\sqrt{2u}}$, 
give us in law
$(X_{\tau^{x_{0}}_{u}-t}, 
\sqrt{\varphi^{(0) 2}_{x}+2\ell_{x}(\tau^{x_{0}}_{u})-
2\ell_{x}(t)})_{x\in V, 0\le t\le \tau^{x_{0}}_{u}}$
under
$\mathbb{P}_{x_{0}}(\cdot \vert \tau_{u}^{x_{0}}<\zeta)\otimes P^{\lbrace x_{0}\rbrace,0}_{\varphi}$. To conclude we use the fact that the law of
$(\check X_{t},\check{\Phi}(t))_{0\le t\le \check T}$ is continuous with respect to
$\check{\Phi}(0)$.
\end{remark}

\subsection{Discrete time description of the process}
\label{sec_dicr_time}

We give a discrete time description of the process
$(\check X_t, (\check n_e(t))_{e\in E})$
that appears in the previous section.
Let $t_{0}=0$ and $0<t_{1}<\dots<t_{j}$ be the stopping times when one of the
stacks $\check n_e(t)$ decreases by $1$, where $t_{j}$ is the time when one of the stacks is completely depleted. It is elementary to check the following:

\begin{proposition}
\label{PropDiscrTime}
The discrete time process
$(\check X_{t_{i}}, (\check n_e(t_{i}))_{e\in E})_{0\leq i\leq j}$ is a stopped Markov process. The transition from time $i-1$ to $i$ is the following:
\begin{itemize}
\item first chose $e$ an edge adjacent to the vertex $\check X_{t_{i-1}}$
according to a probability proportional to $\check n_e(t_{i-1})$;
\item decrease the stack $\check n_e(t_{i-1})$ by 1;
\item
if decreasing $\check n_e(t_{i-1})$ by 1 does not create a new cluster in 
$ \ccc(\check n(t_{i-1}))$, then $\check X_{t_{i-1}}$ crosses the edge
$e$ with probability ${1/2}$ or does not move with probability ${1/2}$;
\item
if decreasing $\check n_e(t_{i-1})$ by 1 does create a new cluster in 
$\ccc(\check n(t_{i-1}))$,
 then $\check X_{t_{i-1}}$ moves/or stays with probability 1 on the unique extremity of $e$ which is in the cluster of the origin $x_0$ in the new configuration.
\end{itemize}
\end{proposition}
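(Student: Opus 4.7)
The plan is to derive the discrete-time transitions directly from the continuous-time definition of $(\check X_t, (\check n_e(t))_{e\in E})$. The main structural remark is that between two consecutive jump times $t_{i-1}$ and $t_i$, $\check X_t$ is pinned at some vertex $x$ and only $\check\Phi_x(t)$ evolves, so that only the values $J_e(t)$, and consequently $\check n_e(t)$, for edges $e$ incident to $x$ can decrease. Conditionally on the history up to $t_{i-1}$, the $\check n_e(t_{i-1})$ points of the Poisson process $N_e$ that lie in $[0, 2J_e(t_{i-1})]$ are distributed, by the standard conditional property of a homogeneous Poisson process, as $\check n_e(t_{i-1})$ i.i.d.\ uniform variables on that interval, jointly independent across the edges $e$ incident to $x$.

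The heart of the argument, and the step that requires an actual calculation, is to identify the law of the next edge to drop. Writing $s(t)=\check\Phi_x(t)^2$, which decreases linearly at rate $2$, and letting $y$ denote the other endpoint of $e$, we have $2J_e(t) = 2 W_e \check\Phi_y \sqrt{s(t)}$; hence the drop on edge $e$ occurs exactly when $s(t) = s(t_{i-1})\, U_e^2$, where $U_e$ is the maximum of $\check n_e(t_{i-1})$ i.i.d.\ uniforms on $[0,1]$, the $U_e$'s being independent across edges. The first drop is therefore that of the edge maximising $U_e$. Pooling the $\sum_{e'\text{ incident to }x} \check n_{e'}(t_{i-1})$ uniforms together, exchangeability implies that the overall maximum belongs to edge $e$'s pool with probability $\check n_e(t_{i-1})/\sum_{e'\text{ incident to } x}\check n_{e'}(t_{i-1})$, which is precisely the edge-selection rule stated in the proposition.

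The movement rule of $\check X$ after the drop is inherited verbatim from the continuous-time definition, so there is nothing further to verify for it. It only remains to establish the Markov property of the discrete-time chain, which follows from two standard facts. First, by the classical order-statistics identity, conditionally on the maximum of $\check n_e(t_{i-1})$ uniforms in $[0, 2J_e(t_{i-1})]$ being located at $2J_e(t_i)$, the remaining $\check n_e(t_{i-1})-1$ uniforms are i.i.d.\ uniform on $[0, 2J_e(t_i)]$. Second, for each other edge $e'$ incident to $x$, the absence of a drop on $[t_{i-1}, t_i]$ means exactly that no point of $N_{e'}$ has been crossed, so the $\check n_{e'}(t_i)=\check n_{e'}(t_{i-1})$ remaining points are i.i.d.\ uniform on $[0, 2J_{e'}(t_i)]$. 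Combined with the independence across edges of the processes $N_e$, these facts show that the conditional law of the future dynamics depends on the past only through the pair $(\check X_{t_i}, (\check n_e(t_i))_{e\in E})$, so that the argument of the two preceding paragraphs can be iterated. The process stops at the first time a stack $\check n_e$ becomes $0$ while the other endpoint's $\check\Phi$ reaches $0$, which is by definition $t_j = \check T$.
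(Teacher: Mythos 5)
The paper gives no proof of this proposition (it is dismissed as ``elementary to check''), so the point is whether your verification is sound, and on the substantive points it is, and it is the intended argument. The key observations are all present and correct: between drop times only $\check\Phi_{\check X_t}$ evolves, so only stacks at the current vertex can move; conditionally on the history up to $t_{i-1}$ (which reveals the already-crossed points of each $N_e$, i.e.\ those above the current level, and the counts $\check n_e(t_{i-1})$ below it), the points of $N_e$ in $[0,2J_e(t_{i-1})]$ are i.i.d.\ uniform and independent across edges; since $s(t)=\check\Phi_x(t)^2$ decreases at rate $2$ and the drop on $e$ occurs when $s(t)=s(t_{i-1})U_e^2$ with the rescaled maxima $U_e$, the first edge to fire is the location of the overall maximum of the pooled rescaled uniforms, hence is $e$ with probability $\check n_e(t_{i-1})/\sum_{e'\ni x}\check n_{e'}(t_{i-1})$; and your order-statistics renewal (for the fired edge) together with the restriction property (for the unfired edges at $x$; edges away from $x$ are untouched) restores the conditional-uniformity invariant at $t_i$, which both yields the Markov property of the skeleton and shows the transition law does not depend on $\check\Phi(t_i)$ beyond $(\check X_{t_i},\check n(t_i))$.

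Your final sentence, however, misidentifies the stopping rule, though this is bookkeeping rather than a flaw in the mechanism. First, $t_j\neq\check T$: after the last drop no further jump occurs, $\check X$ stays put, and $\check\Phi_{\check X_{t_j}}$ reaches $0$ only at the strictly later time $\check T=t_j+\frac{1}{2}\check\Phi_{\check X_{t_j}}(t_j)^2$ (and it is the \emph{current} vertex's $\check\Phi$ that vanishes, not ``the other endpoint's''). Second, individual stacks are depleted many times during the evolution --- that is precisely when new clusters in $\check\ccc(t)$ can be created, an event your own transition rules handle --- so the chain cannot stop at the first time some $\check n_e$ hits $0$. The correct criterion, which your own invariant delivers immediately, is that $t_j$ is the first drop time $t_i$ at which \emph{all} stacks adjacent to the current vertex vanish, $\sum_{e\ni\check X_{t_i}}\check n_e(t_i)=0$: then no point of any $N_e$, $e\ni\check X_{t_i}$, remains below the current level, so no further drop can occur and $\check\Phi_{\check X_{t_i}}$ runs down to $0$. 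This is exactly how the stopping time is written in the discrete-time construction of Section \ref{sec:coupinv}, and it is consistent with the preceding proposition asserting $\check X_{\check T}=x_0$ for finite $V$.
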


\subsection{An alternative description via jump rates}\label{sec_jump}
We provide an alternative description of the process 
$(\check X_t,\ccc(\check n(t)))$ 
that appears in Section \ref{sec_Poisson}.
We will use this description in
\cite{LST2018InversionLine} by passing it to a fine mesh limit to obtain a process inverting the Ray-Knight identity for a Brownian motion on $\mathbb{R}$.

We will denote
$\check \ccc(t)=\ccc(\check n(t))$, since below we will not have access to the knowledge of $\check n(t)$, only to that of
$\ccc(\check n(t))$.

\begin{proposition}\label{prop-jump}
The process $(\check X_t,\check \ccc(t))$ defined in Section \ref{sec_Poisson} can be alternatively described by its jump rates :
conditional on its past at time $t$, if $\check X_t=x$, $y\sim x$ and $\lbrace x,y\rbrace\in \check \ccc(t)$, then
\begin{itemize}
\item[(1)]  $\check X$ jumps to $y$ without modification of 
$\check \ccc(t)$ at rate
\begin{displaymath}
W_{x,y}\dfrac{\check\Phi_{y}(t)}{\check\Phi_{x}(t)};
\end{displaymath}
\item[(2)]  the edge $\lbrace x,y\rbrace$ is closed in 
$\check \ccc(t)$ at rate
\begin{displaymath}
2W_{x,y}\dfrac{\check\Phi_{y}(t)}{\check\Phi_{x}(t)}
\left(e^{2W_{x,y}\check\Phi_{x}(t)\check\Phi_{y}(t)}-1\right)^{-1}
\end{displaymath}
and, conditional on that last event:

- if $y$ is connected to
$x$ in the configuration $\check \ccc(t)\setminus\{x,y\}$, 
then $\check X$ instantaneously jumps to $y$ with probability $1/2$ and stays at $x$ with probability $1/2$;

- otherwise $\check X_t$ moves/or stays with probability 1 on the unique extremity
of $\{x,y\}$ which is in the cluster of the origin $x_0$ in the new configuration.
\end{itemize}
\end{proposition}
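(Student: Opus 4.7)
The plan is to obtain the jump-rate description of Proposition \ref{prop-jump} by computing the infinitesimal rates of $(\check X_t, \check\ccc(t))$ directly from the Poisson construction of Section \ref{sec_Poisson}, integrating out the hidden counters $(\check n_e(t))_{e\in E}$. The key relation is $\check n_e(t) = N_e(u_e(t))$, where $u_e(t) := 2J_e(\check\Phi(t)) = 2W_e \check\Phi_{e_-}(t)\check\Phi_{e_+}(t)$ and $N_e$ is a rate-one Poisson process, independent across $e$. Since $d\check\Phi_x^2/dt = -2$ when $\check X_t = x$, one has $|du_e/dt| = 2W_e\check\Phi_y(t)/\check\Phi_x(t)$ for $e=\{x,y\}$; combined with the elementary fact that, given $N(u)=n$, the largest arrival time of a rate-one Poisson process on $[0,u]$ has density $n/u$ at $u$, this gives that $\check n_e(t)$ decreases by one at instantaneous rate $n/\check\Phi_x(t)^2$ whenever $\check n_e(t)=n$ and $\check X_t=x$.

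The key step is the following conditional-law lemma: letting $\mathcal{F}_t := \sigma((\check X_s,\check\ccc(s))_{s\le t})$, conditionally on $\mathcal{F}_t$ the variables $(\check n_e(t))_{e\in E}$ are mutually independent; $\check n_e(t)=0$ if $e\notin\check\ccc(t)$, and for $e\in\check\ccc(t)$, $\check n_e(t)$ is Poisson$(u_e(t))$ conditioned on being $\ge 1$. The proof uses Poisson thinning: one assigns independent $1/2$-$1/2$ Bernoulli coin flips to the arrivals of $N_e$, decomposing it as a superposition $N_e = J_e + N_e^0$ of two independent rate-$1/2$ Poisson processes. Conditional on $e\in\check\ccc(t)$ (equivalently $N_e(u_e(t))\ge 1$, since $N_e(u_e(\cdot))$ is non-increasing in $t$), all drops of $\check n_e$ in $[0,t]$ were non-closure drops, and the observable jumps of $\check X$ along $e$ are exactly the arrivals of $J_e$ in $(u_e(t),u_e(0)]$. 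Because $N_e$ on $[0,u_e(t)]$ is independent of $N_e$ on $(u_e(t),u_e(0)]$, and because the information about $e$ in $\mathcal{F}_t$ depends only on the latter interval together with the constraint $N_e(u_e(t))\ge 1$, the conditional law of $N_e(u_e(t))$ given $\mathcal{F}_t$ is Poisson$(u_e(t))$ restricted to $\ge 1$. Independence across $e$ is inherited from the independence of the $N_e$'s.

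Given this lemma, the rates in Proposition \ref{prop-jump} follow by a short computation. With $u=u_e(t)=2W_{x,y}\check\Phi_x(t)\check\Phi_y(t)$, one has $\mathbb{P}(\check n_e(t)=n\mid e\in\check\ccc(t),\mathcal{F}_t)=u^n/(n!(e^u-1))$ for $n\ge 1$. The rate of closing $e$ equals $(1/\check\Phi_x^2)\cdot u/(e^u-1)=2W_{x,y}(\check\Phi_y/\check\Phi_x)/(e^u-1)$, exactly matching (2); and using the identity $\sum_{n\ge 2}n u^n/n!=u(e^u-1)$, the rate of a jump along $e$ with $\check\ccc$ unchanged equals $\sum_{n\ge 2}(n/(2\check\Phi_x^2))\cdot u^n/(n!(e^u-1))=u/(2\check\Phi_x^2)=W_{x,y}\check\Phi_y/\check\Phi_x$, matching (1). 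The conditional split following a drop---a $1/2$-$1/2$ split between jumping and staying when no new cluster is created, and a forced move to the component of $x_0$ otherwise---is inherited verbatim from the construction in Section \ref{sec_Poisson}.

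The main technical hurdle is the conditional-law lemma, where one must carefully disentangle the feedback between the trajectory of $\check X$ (which drives $u_e(\cdot)$) and the arrivals of $N_e$ (which in turn trigger the jumps of $\check X$). I would make this rigorous either by induction on the successive transition times of $(\check X,\check\ccc)$, using the strong Markov property of the $N_e$ at each step, or equivalently by verifying that the claimed conditional law is invariant under the infinitesimal dynamics of $(\check X,\check\ccc,\check n)$ both between and at observable events, taking the product Poisson law at $t=0$ as base case; in either approach the rate computation of the third paragraph then completes the proof.
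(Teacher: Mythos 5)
Your proposal is correct and takes essentially the same route as the paper: the paper's proof likewise derives (1) and (2) by an infinitesimal computation on the Poisson construction over $[t,t+\Delta t]$, conditioning on $\lbrace x,y\rbrace\in\check\ccc(t)$, i.e.\ on $\check n_e(t)\ge 1$, with the truncated-Poisson conditional law of the hidden counters used implicitly. Your explicit conditional-law lemma (with the thinning/strong-Markov induction plan) simply rigorizes the step the paper leaves tacit, and your rate computation, integrating the drop rate $n/\check\Phi_x(t)^2$ against that truncated-Poisson law, is equivalent to the paper's increment-probability calculation.
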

\begin{remark}
It is clear from this description that the joint process 
$(\check X_t,\check \ccc(t), \check \Phi(t))$ is Markov process, and well defined up to the time
$$
\check T:=\inf\{t\ge 0\vert \exists x\in V, ~\text{s.t.}~\check\Phi_x(t)=0\}.
$$
\end{remark}

\begin{remark}
One can also retrieve the process in Section \ref{sec_Poisson} from the representation in Proposition \ref{prop-jump} as follows.
Consider the representation of Proposition \ref{prop-jump} on the graph where each edge $e$ is replaced by a large number $N$ of 
parallel edges with conductance $W_e/N$. Consider now $\check n^{(N)}_{x,y}(t)$ the number of parallel edges that are open in the configuration
$\check \ccc^{(N)}(t)$ between $x$ and $y$. Then, when $N\to\infty$, $(\check n^{(N)}(t))_{t\ge0}$,  converges in law to 
$(\check n(t))_{t\ge0}$,  defined in Section \ref{sec_Poisson}.
We will not detail this, but roughly, this corresponds to approximation of Poisson r.v. by binomial r.v. 
\end{remark}

\begin{proof}[Proof of Proposition \ref{prop-jump}]
Here $(\check\fff_{t})_{t\ge 0}$ will denote the natural filtration of
$(\check X_{t\wedge \check T},
\check \ccc(t\wedge \check T))_{t\geq 0}$.

Assume $\check X_t=x$, fix $y\sim x$ and let $e=\{x,y\}$. Recall that 
$\{x,y\}\in\check \ccc(t)$ iff $\check n_e(t)\ge 1$.
Let be
\begin{displaymath}
J_{e}^{x}(t,\Delta t)=
W_{x,y} \sqrt{\check\Phi_{x}(t)^{2}-2\Delta t}
\,\check\Phi_{y}(t),
\end{displaymath}
which is the value of 
$J_{e}(\Phi(t+\Delta t))$ on the event
\begin{displaymath}
\{X_{t}=x,\, n(t+\Delta t)=n(t),\,
W_{x,y} \sqrt{\check\Phi_{x}(t)^{2}-2\Delta t}
\,\check\Phi_{y}(t)\geq 0\}.
\end{displaymath}

Let us first prove (1):
\begin{align*}
&\Pb\left(\text{On $[t,t+\Delta t]$ $\check X$ first jumps from $x$ to $y$ without modifying }
\check \ccc|
\check X_{t}=x,\,e\in
\check \ccc(t),\,
\check\fff_{t}\right)\\
&=\frac{1}{2}\Pb\left(\exists s\in [t,t+\Delta t],
\check n(t)-\check n(s)=\delta_{e}
,\,\check n_e(s)\ge1\,|\,\check X_{t}=x,\,\check n_e(t)\ge 1
,\,\check\fff_{t}
\right)+o(\Delta t)\\
&=
\frac{1}{2}\Pb\left(N_{e}(2 J_e(\check\Phi(t))\ge 2,\,
N_{e}(2 J_e(\check\Phi(t))-
N_{e}(2 J_{e}^{x}(t,\Delta t))
\ge 1\,|\,\check X_{t}=x,\,
\check n_{e}(t)\ge 1
,\,\check\fff_{t}
\right)+o(\Delta t)
\\
&=
\frac{1}{2}\Pb\left(N_{e}(2 J_{e}^{x}(t,\Delta t))\ge 1,\,
N_{e}(2 J_e(\check\Phi(t))-
N_{e}(2 J_{e}^{x}(t,\Delta t))
\ge 1\,|\,\check X_{t}=x,\,
\check n_{e}(t)\ge 1
,\,\check\fff_{t}
\right)+o(\Delta t)
\\
&=
\frac{1}{2}
\dfrac{1-e^{-2 J_{e}^{x}(t,\Delta t)}}
{1-e^{-2 J_e(\check\Phi(t))}}
(1-e^{-2(J_e(\check\Phi(t))-J_{e}^{x}(t,\Delta t))})
+o(\Delta t)
\\
&=J_e(\check\Phi(t))-J_{e}^{x}(t,\Delta t) + o(\Delta t)
=W_{xy}\dfrac{\check\Phi_{y}(t)}{\check\Phi_{x}(t)}\Delta t+o(\Delta t).
\end{align*}

Similarly, (2) follows from the following computation:
\begin{align*}
&\Pb\left(\text{On $[t,t+\Delta t]$ first the edge $e$ is closed in $\check \ccc$}\,|
\,\check X_{t}=x,\,e\in\check \ccc(t),\,
\check \fff_{t}\right)\\
&=\Pb\left(N_{e}(2 J_{e}^{x}(t,\Delta t))=0,\,
N_{e}(2 J_e(\check\Phi(t))=1
\,|\,\check X_{t}=x,\,\check n_e(t)\ge1,\,
\check \fff_{t}\right)
+o(\Delta t)
\\&=
\dfrac{e^{-2 J_{e}^{x}(t,\Delta t)}
2(J_e(\check\Phi(t))-J_{e}^{x}(t,\Delta t))
e^{-2(J_e(\check\Phi(t))-J_{e}^{x}(t,\Delta t))}}
{1-e^{-2 J_e(\check\Phi(t))}}
+o(\Delta t)
\\&=
\dfrac{2(J_e(\check\Phi(t))-J_{e}^{x}(t,\Delta t))}
{e^{2 J_e(\check\Phi(t))}-1}
+o(\Delta t)
\\&=
2W_{x,y}\dfrac{\check\Phi_{y}(t)}{\check\Phi_{x}(t)}
\left(e^{2W_{x,y}\check\Phi_{x}(t)\check\Phi_{y}(t)}-1\right)^{-1}
+o(\Delta t).
\qedhere
\end{align*}
\end{proof}

We easily deduce from the Proposition \ref{prop-jump} and Theorem \ref{thm-Poisson2} the following alternative inversion of the coupling in Theorem \ref{Lupu}.
\begin{theorem}\label{thm-jump-rates}
With the notation of Theorem \ref{Lupu}, conditional on 
$(\varphi^{(u)},\mathcal{O}_{u})$, $(X_{t})_{t\le \tau_{u}^{x_{0}}}$ has the law
of self-interacting process $(\check X_{\check T-t})_{0\le t\le \check T}$ defined by jump rates of Proposition \ref{prop-jump}
starting with 
$$
\check \Phi_x=\sqrt{(\varphi_{x}^{(0)})^2+2\ell_{x}(\tau_{u}^{x_{0}})} \hbox{ and } 
\check\ccc(0)=\mathcal{O}_{u}.
$$
Moreover $(\varphi^{(0)},\mathcal{O}(\varphi^{(0)}))$ has the same law as
$(\sigma'\check\Phi(\check T), \check\ccc(\check T))$ where 
$(\sigma'_x)_{x\in V}$ is a configuration of signs obtained by picking a sign at random independently on
each connected component of $\check\ccc(\check T)$, with the condition that the component of $x_0$ has a + sign.

\end{theorem}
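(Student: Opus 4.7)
I would derive Theorem \ref{thm-jump-rates} as a direct consequence of Theorem \ref{thm-Poisson} and Proposition \ref{prop-jump}. Theorem \ref{thm-Poisson} already provides, conditionally on $\varphi^{(u)}$, a coupling between $(X_t)_{0\le t\le \tau_u^{x_0}}$ and the time-reversal of the forward Poisson-based process $(\check X_t)_{0\le t\le \check T}$ of Section \ref{sec_Poisson}. The task here is to add $\mathcal{O}_u$ to the conditioning and to replace the Poisson description of $(\check X,\check\ccc)$ by the jump-rate description of Proposition \ref{prop-jump}. The central step is to identify $\mathcal{O}_u$ with the initial cluster configuration $\check\ccc(0)$ of the forward process under the coupling.

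To justify this identification, I observe that in the Poisson construction of Section \ref{sec_Poisson}, $e\in\check\ccc(0)$ iff $\check n_e(0)\ge 1$. Since $N_e$ is a rate-$1$ Poisson process independent of $\varphi^{(u)}$, the conditional probability of $\check n_e(0)\ge 1$ given $\varphi^{(u)}$ equals $1-\exp(-2W_e|\varphi^{(u)}_{e-}\varphi^{(u)}_{e+}|)$ on edges $e$ with $\operatorname{sign}(\varphi^{(u)}_{e-})=\operatorname{sign}(\varphi^{(u)}_{e+})$, and $0$ otherwise, with these events independent across $e\in E$. By Lemma \ref{lem:fki} this is precisely the conditional law of $\mathcal{O}(\varphi^{(u)})$ given $\varphi^{(u)}$; by Theorem \ref{Lupu} the same holds for $\mathcal{O}_u$. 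Hence the coupling of Theorem \ref{thm-Poisson} can be enriched so that $\mathcal{O}_u=\check\ccc(0)$ almost surely, without altering the joint law of $(X,\varphi^{(0)},\mathcal{O}(\varphi^{(0)}),\varphi^{(u)})$, because the post-initial evolution of the forward process depends on the Poisson data only through the pair $(\check\Phi(0),\check\ccc(0))$.

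Given the enriched coupling, conditioning on $(\varphi^{(u)},\mathcal{O}_u)$ amounts to conditioning the forward Poisson-based process on $(\check\Phi(0),\check\ccc(0))=(|\varphi^{(u)}|,\mathcal{O}_u)$. By Proposition \ref{prop-jump}, the resulting Markov process is exactly the jump-rate process stated there, started from the prescribed initial data. The first claim of Theorem \ref{thm-jump-rates} then follows via the time-reversal identity of Theorem \ref{thm-Poisson}, and the second claim is immediate from the second part of Theorem \ref{thm-Poisson} under the same enriched coupling. I expect the only genuine work to be the identification $\mathcal{O}_u=\check\ccc(0)$ carried out in the previous paragraph, i.e.\ matching the FK-Ising conditional distributions on both sides; everything else is bookkeeping that reduces to invoking Proposition \ref{prop-jump} and the Markov property of $(\check X_t,\check\ccc(t),\check\Phi(t))$ in the forward direction.
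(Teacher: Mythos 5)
Your reduction has the right overall shape, and two of its ingredients are sound: the computation that, given $\check\varphi$, the initial configuration $\check\ccc(0)$ of the Poisson construction is an independent-edge percolation open with probability $1-e^{-2W_e\vert\check\varphi_{e_-}\check\varphi_{e_+}\vert}$ on same-sign edges (hence matches Lemma \ref{lem:fki}), and the use of Proposition \ref{prop-jump} to say that the forward process conditioned on $(\check\Phi(0),\check\ccc(0))$ is the jump-rate Markov process. But there is a genuine gap at the pivotal step, namely the claim that ``the coupling of Theorem \ref{thm-Poisson} can be enriched so that $\mathcal{O}_u=\check\ccc(0)$ almost surely.'' What you actually verify is an equality of two \emph{conditional marginals} given the field: $\mathcal{L}(\mathcal{O}_u\mid\varphi^{(u)})=\mathcal{L}(\check\ccc(0)\mid\check\varphi)$ (via Theorem \ref{Lupu} and Lemma \ref{lem:fki}), and $\mathcal{L}((X_{\tau_u^{x_0}-t})_t\mid\varphi^{(u)})=\mathcal{L}(\check X\mid\check\varphi)$ (Theorem \ref{thm-Poisson}). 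Equality of these two marginals does not determine the \emph{joint} conditional law of the pair (path, configuration), and the theorem you are proving is exactly a statement about that joint law: conditioning on $\mathcal{O}_u$ in addition to $\varphi^{(u)}$ only makes sense once one knows that the dependence between $X$ and $\mathcal{O}_u$ given $\varphi^{(u)}$ coincides with the dependence between $\check X$ and $\check\ccc(0)$ given $\check\varphi$. Your justification (``the post-initial evolution of the forward process depends on the Poisson data only through $(\check\Phi(0),\check\ccc(0))$'') concerns only the forward side; it says nothing about how $\mathcal{O}_u$ is correlated with the trajectory $X$ on the real-world side, where $\mathcal{O}_u$ is built from $\varphi^{(0)}$, the edges crossed by $X$, and extra Bernoulli variables, and is therefore far from conditionally independent of the path given $\varphi^{(u)}$.

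The paper closes precisely this gap by proving the stronger Theorem \ref{thm-Poisson2}: conditionally on $\varphi^{(u)}=\check\varphi$, the \emph{pair} $(X_t,(n_e(t))_{e\in E})_{t\le\tau_u^{x_0}}$ reversed in time has the law of $(\check X_{\check T-t},(\check n_e(\check T-t))_e)_{t\le\check T}$; this tracks the occupation numbers along the whole trajectory, and Lemma \ref{end-distrib} identifies $(\varphi^{(0)},\ccc(0),\varphi^{(u)},\ccc(\tau_u^{x_0}))$ in law with $(\varphi^{(0)},\mathcal{O}(\varphi^{(0)}),\varphi^{(u)},\mathcal{O}_u)$, so that $\mathcal{O}_u$ \emph{is} the support of $n(\tau_u^{x_0})$, i.e.\ the initial configuration $\check\ccc(0)$ of the reversed process, jointly with the path. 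Theorem \ref{thm-jump-rates} then follows by conditioning Theorem \ref{thm-Poisson2} on $\ccc(\tau_u^{x_0})$ and applying Proposition \ref{prop-jump}, which is the deduction the paper indicates. The joint statement is itself proved by the change-of-variables and martingale (Radon--Nikodym) computation of Section \ref{sec:proof} (Lemmas \ref{change-var} and \ref{RN}); there is no shortcut to it from the marginal statements of Theorems \ref{Lupu} and \ref{thm-Poisson} alone, so your proposal as written does not constitute a proof unless you import Theorem \ref{thm-Poisson2} (or reprove its joint content) in place of the enrichment claim.
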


\subsection{Inversion of Lupu's isomorphism for loop-soup}
\label{sec:lejaninv}
Let us first recall how the loops in $\mathcal{L}_{\alpha}$ are connected to the excursions of the jump process $X$. We refer to \cite{LeJan2011Loops} for details. $\textbf{G}$ is the Green's function. 
$L_{x_{0}}(\mathcal{L}_{\alpha})$ follows a $\Gamma(\alpha, \textbf{G}(x_{0},x_{0}))$ distribution, that is to say
$L_{x_{0}}(\mathcal{L}_{\alpha})/\textbf{G}(x_{0},x_{0})$
follows a Gamma distribution 
$\Gamma(\alpha, 1)$ with density
\begin{displaymath}
\indic_{\{r>0\}}\dfrac{1}{\Gamma(\alpha)}
r^{\alpha -1} e^{-r} dr.
\end{displaymath}
As a process in $\alpha$, where one drops independent loops as the intensity parameter $\alpha$ increases,
$(L_{x_{0}}(\mathcal{L}_{\alpha})/\textbf{G}(x_{0},x_{0}))_{\alpha\geq 0}$ is a pure jump 
Gamma subordinator with Lévy measure
\begin{displaymath}
d\Lambda (r) = 
\indic_{\{r>0\}}\dfrac{1}{\Gamma(\alpha)}\dfrac{ e^{-r}}{r}
 dr.
\end{displaymath}
Given such a Gamma subordinator
$(R(\alpha))_{\alpha\geq 0}$ with
$R(0)=0$, then for all $\alpha>0$, the marginal 
$R(\alpha)$ follows a $\Gamma(\alpha, 1)$ distribution. Moreover, the normalized family of jump sizes
\begin{displaymath}
\left(\dfrac{R(a)-R(a^{-})}{R(\alpha)}\right)
_{0\leq a\leq \alpha, R(a)\neq R(a^{-})}
\end{displaymath}
is independent of $R(\alpha)$ and has the law of a 
Poisson-Dirichlet partition $PD(0,\alpha)$ of 
$[0,1]$. The above may be taken as a definition of
$PD(0,\alpha)$. It is a random infinite countable family of positive reals summing to $1$. For more on Poisson-Dirichlet partitions, we refer to \cite{Pitman2006StFlour}.

\begin{proposition}[From excursions to loops]
\label{PropPD}
Let $\alpha>0$ and $x_{0}\in V$. 
$L_{x_{0}}(\mathcal{L}_{\alpha})$ is distributed according to a Gamma
$\Gamma(\alpha, \textbf{G}(x_{0},x_{0}))$ law, where 
$\textbf{G}$ is the Green's function. Let $u>0$, and consider the path $(X_{t})_{0\leq t\leq \tau_{u}^{x_{0}}}$ conditional on $\tau_{u}^{x_{0}}<\zeta$. Let $(Y_{j})_{j\geq 1}$ be an independent Poisson-Dirichlet partition $PD(0,\alpha)$ of $[0,1]$ (so that $\sum_{j\geq 1}Y_{j}=1$). Let $S_{0}=0$ and
\begin{displaymath}
S_{j}=\sum_{i=1}^{j}Y_{i}.
\end{displaymath}
Let
\begin{displaymath}
\tau_{j}= \tau_{u S_{j}}^{x_{0}}.
\end{displaymath}
Consider the family of paths
\begin{displaymath}
\left((X_{\tau_{j-1}+t})_{0\leq t\leq \tau_{j}-\tau_{j-1}}\right)_{j\geq 1}.
\end{displaymath}
It is a countable family of loops rooted in $x_{0}$. It has the same law as the family of all the loops in $\mathcal{L}_{\alpha}$ that visit $x_{0}$, conditional on $L_{x_0}(\mathcal{L}_{\alpha})=u$.
\end{proposition}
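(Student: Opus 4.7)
The plan is to proceed in three steps: identify the Laplace transform of $L_{x_0}(\mathcal{L}_\alpha)$ to obtain the Gamma distribution, recognize the local times of the loops visiting $x_0$ as the jumps of a Gamma subordinator so that the Perman--Pitman--Yor theorem produces the Poisson--Dirichlet partition, and finally identify each rerooted loop with an excursion of $X$ from $x_0$ via excursion theory.

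First, using the standard Poisson identity
\begin{equation*}
\mathbb{E}\bigl[e^{-\lambda L_{x_0}(\mathcal{L}_\alpha)}\bigr]=\exp\left(-\alpha\int\bigl(1-e^{-\lambda\ell_{x_0}(\gamma)}\bigr)\,d\mu_{\rm loop}(\gamma)\right),
\end{equation*}
I would compute the integral on the right by expanding in powers of $\lambda$, where each term is a multiple visit to $x_0$ expressed via the Markov property, and using the $dt/t$ factor in $\mu_{\rm loop}$. This is a direct instance of Le Jan's determinantal formula restricted to a single site and yields $\log(1+\lambda G(x_0,x_0))$. One therefore obtains $\mathbb{E}[e^{-\lambda L_{x_0}(\mathcal{L}_\alpha)}]=(1+\lambda G(x_0,x_0))^{-\alpha}$, the Laplace transform of $\Gamma(\alpha,G(x_0,x_0))$.

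Second, the Frullani representation
\begin{equation*}
\log(1+\lambda G(x_0,x_0))=\int_0^\infty\bigl(1-e^{-\lambda\ell}\bigr)\frac{e^{-\ell/G(x_0,x_0)}}{\ell}\,d\ell
\end{equation*}
identifies the image of $\alpha\mu_{\rm loop}|_{\{x_0\in\gamma\}}$ under $\gamma\mapsto\ell_{x_0}(\gamma)$ with the Lévy measure $\alpha\ell^{-1}e^{-\ell/G(x_0,x_0)}\,d\ell$ of a Gamma subordinator taken at time $\alpha$. By the classical Perman--Pitman--Yor theorem, conditionally on $L_{x_0}(\mathcal{L}_\alpha)=u$ the atoms $(\ell_{x_0}(\gamma_j))_j$, after normalization by $u$, form a $PD(0,\alpha)$ partition.

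Third, I would reroot each loop $\gamma$ visiting $x_0$ at its first visit to $x_0$ and invoke excursion theory. Using a Palm decomposition of $\mu_{\rm loop}$ at $x_0$ together with the strong Markov property, the rerooted loop conditionally on $\ell_{x_0}(\gamma)=\ell$ has the same law as $(X_t)_{0\le t\le\tau_\ell^{x_0}}$ under $\mathbb{P}_{x_0}(\cdot\,|\,\tau_\ell^{x_0}<\zeta)$: both sides are obtained by concatenating i.i.d. excursions from $x_0$ to $x_0$, stopped when the local time at $x_0$ reaches $\ell$. Concatenating the rerooted loops then reproduces $(X_t)_{0\le t\le\tau_u^{x_0}}$ under $\mathbb{P}_{x_0}(\cdot\,|\,\tau_u^{x_0}<\zeta)$, with the stated decomposition at $\tau_j=\tau_{uS_j}^{x_0}$. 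The main obstacle is the Palm-type bookkeeping in this step: one must balance the $1/t$ weighting of $\mu_{\rm loop}$ against the rooting multiplicity $\ell_{x_0}(\gamma)$ for loops through $x_0$, in order both to recover the density $\ell^{-1}e^{-\ell/G(x_0,x_0)}$ and to reduce the rerooted-loop law to the excursion law of $X$ started from $x_0$.
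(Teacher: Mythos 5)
The paper never proves Proposition \ref{PropPD}: it is explicitly \emph{recalled} as known from the loop-soup literature (Le Jan \cite{LeJan2011Loops}; see also \cite{SabotTarres2015RK}, where the $PD(0,1/2)$ case plays the same role), so there is no internal proof to compare yours against. Your three-step argument is the standard route to this statement and is correct in outline: Campbell's formula for the Poisson random measure reduces the first claim to $\int(1-e^{-\lambda\ell_{x_0}(\gamma)})\,d\mu_{\rm loop}=\log(1+\lambda G(x_0,x_0))$, which indeed follows by expansion from the moment identity $\mu_{\rm loop}(\ell_{x_0}^{k})=(k-1)!\,G(x_0,x_0)^{k}$; the Frullani identity together with uniqueness of a L\'evy measure with a given Laplace exponent identifies the pushforward of $\alpha\mu_{\rm loop}$ under $\ell_{x_0}$ as the Gamma L\'evy measure $\alpha\ell^{-1}e^{-\ell/G(x_0,x_0)}\,d\ell$; and Perman--Pitman--Yor gives the $PD(0,\alpha)$ law of the normalized local times given $L_{x_0}(\mathcal{L}_\alpha)=u$ (for the Gamma subordinator the normalized jumps are moreover independent of the total, which is what licenses taking $(Y_j)$ independent of the path in the statement).

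Two points in your third step should be stated more carefully, though you correctly flagged them as the remaining bookkeeping. First, an unrooted loop has no ``first visit to $x_0$''; the correct operation is Palm rooting at a point chosen uniformly according to the local time at $x_0$, which yields a well-defined based-loop law because the concatenation of excursions over the local-time interval $[0,\ell]$ is cyclically exchangeable. This is exactly where the rooting multiplicity $\ell_{x_0}(\gamma)$ cancels the $1/t$ weighting of $\mu_{\rm loop}$ and produces the disintegration $\mu_{\rm loop}\indic_{\{\ell_{x_0}>0\}}=\int_0^\infty \ell^{-1}e^{-\ell/G(x_0,x_0)}\,Q_\ell\,d\ell$, with $Q_\ell$ the law of $(X_t)_{0\le t\le\tau_\ell^{x_0}}$ under $\mathbb{P}_{x_0}(\cdot\mid\tau_\ell^{x_0}<\zeta)$ viewed as a loop. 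Second, equality of the two \emph{families} requires more than equality of the size distributions: you need (a) that conditionally on the sizes $(\ell_j)$ the soup's loops through $x_0$ are independent with laws $Q_{\ell_j}$ (marking property of the Poisson measure), matching the conditional independence of the path segments over disjoint local-time intervals --- where the conditioning on $\tau_u^{x_0}<\zeta$ is absorbed excursion by excursion, since killing occurs at constant rate in the local time at $x_0$; and (b) that the stick-breaking sequence $(Y_j)$ with partial sums $S_j$ is the \emph{size-biased} order of the $PD(0,\alpha)$ atoms, so that the sequential cutting at $\tau_{uS_j}^{x_0}$ is compared with the soup's loops enumerated in size-biased order. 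With these two points made explicit, your sketch becomes a complete and correct proof, essentially the one given in the references the paper is recalling.
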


Next we describe how to invert the discrete version for Lupu's isomorphism Theorem \ref{PropIsoLupuLoops} for the loop-soup in the same way as in Theorem \ref{thm-Poisson}. The idea is to define an arbitrary order on the vertices, $(x_{i})_{1\leq i\leq\vert V\vert}$. Then, starting from a signed
GFF, run the inverting process introduced previously starting from $x_{1}$, up to exhausting the field in $x_{1}$. This would produce a path from $x_{1}$ to $x_{1}$, which is conditionally distributed like all the loops in
$\mathcal{L}_{1/2}$ that visit $x_{1}$, glued together. By partitioning this path according to the procedure described in
Proposition \ref{PropPD}, one recovers all the loops visiting
$x_{1}$. Then one continues with the remaining field, which is $0$ in
$x_{1}$ and has smaller FK-Ising clusters than the initial one, and runs the inverting process starting from $x_{2}$, in order to get all the loops that visit $x_{2}$ but not $x_{1}$. Then one iterates. At each step, one gets all the loops that visit $x_{i}$, but none of
$x_{1},\dots, x_{i-1}$. In what follows we describe this more formally.

Let $(\check \varphi_x)_{x\in V}$ be a real function on $V$. Set 
$$
\check \Phi_x=\vert\check\varphi_x\vert, ~~\sigma_x=\operatorname{sign}(\check\varphi_x).
$$
Let $(x_{i})_{1\leq i\leq\vert V\vert}$ be an enumeration of $V$ (which may be infinite).
We define by induction on $i$ the self interacting processes
$((\check X_{i,t})_{1\leq i\leq\vert V\vert}, 
(\check n_e(t))_{e\in E})$. 
$\check{T}_{i}$ will denote the end-time for $\check X_{i,t}$, and
$\check{T}^{+}_{i}=\sum_{1\leq j\leq i}\check{T}_{j}$.
By definition, $\check{T}^{+}_{0}=0$.
$L_{x}(t)$ will denote
\begin{displaymath}
L_{x}(t):=\sum_{1\leq i\leq\vert V\vert}
\check{\ell}_{x}(i,0\vee(t-\check{T}^{+}_{i})),
\end{displaymath}
where $\check{\ell}_{x}(i,t)$ are the occupation times for
$\check X_{i,t}$.
For $t\ge 0$, we set
$$
\check\Phi_x(t)=\sqrt{(\check\Phi_x)^2-2L_x(t)},~\forall x\in V,
\qquad
J_e(\check\Phi(t))=W_e \check\Phi_{e_-}(t)\check\Phi_{e_+}(t), \forall e\in E.
$$
The end-times $\check{T}_{i}$ are defined by induction as
\begin{displaymath}
\check{T}_{i}=\inf\lbrace t\geq 0\vert 
\check{\Phi}_{\check{X}_{i,t}}(t+\check{T}^{+}_{i-1})=0\rbrace.
\end{displaymath}
Let $(N_e(v))_{v\ge 0}$ be independent Poisson Point Processes on $\R_+$ with intensity 1, for each edge $e\in E$.
We set
$$
\check n_e(t)=
\begin{cases} 
N_e(2 J_e(\check\Phi(t))), &\hbox{ if } \sigma_{e_-}\sigma_{e_+}=+1,
\\
0, &\hbox{ if } \sigma_{e_-}\sigma_{e_+}=-1.
\end{cases}
$$
We also denote by $\ccc(\check n(t))\subset E$ the configuration of edges such that $\check n_e(t)>0$.
$\check X_{i,t}$ starts at $x_{i}$.
For $t\in[\check{T}^{+}_{i-1},\check{T}^{+}_{i}]$,
\begin{itemize}
\item
if $\check n_e(t)$ decreases by 1 at time $t$, but does not create a new cluster in $\ccc(\check n(t))$, then $\check X_{i,t-\check{T}^{+}_{i-1}}$ crosses the edge
$e$ with probability ${1/2}$ or does not move with probability ${1/2}$;
\item
if $\check n_e(t)$ decreases by 1 at time $t$, and does create a new cluster in $\ccc(\check n(t))$,
 then $\check X_{i,t-\check{T}^{+}_{i-1}}$ moves/or stays with probability 1 on the unique extremity
of $e$ which is in the cluster of the origin $x_i$ in the new configuration.
\end{itemize}

By induction, using Theorem \ref{thm-Poisson}, we deduce the following:

\begin{theorem}
\label{ThmPoissonLoopSoup}
Let $\varphi$ be a GFF on $\mathcal{G}$ with the law $P_{\varphi}$.
If one sets $\check{\varphi}=\varphi$ in the preceding construction, then
for all $i\in \lbrace 1,\dots,\vert V\vert\rbrace$,
$\check{T}_{i}<+\infty$, 
$\check{X}_{i,\check{T}_{i}} = x_{i}$ and the
path $(\check{X}_{i,t})_{t\leq\check{T}_{i}}$ has the same law as a concatenation in $x_{i}$ of all the loops in a loop-soup
$\mathcal{L}_{1/2}$ that visit $x_{i}$, but none of the
$x_{1},\dots,x_{i-1}$. To retrieve the loops out of each path
$(\check{X}_{i,t})_{t\leq\check{T}_{i}}$, one has to partition it according to
a Poisson-Dirichlet partition as in Proposition \ref{PropPD}.
The coupling between the GFF $\varphi$ and the loop-soup obtained from
$((\check X_{i,t})_{1\leq i\leq\vert V\vert}, 
(\check n_e(t))_{e\in E})$ is the same as in Theorem
\ref{PropIsoLupuLoops}.
\end{theorem}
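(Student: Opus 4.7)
The plan is induction on $i \in \{1,\ldots,|V|\}$, at each stage reducing to a single application of Theorem \ref{thm-Poisson} on an appropriately restricted graph. Set $V_i := V \setminus \{x_1,\ldots,x_{i-1}\}$ and let $\varphi^{(i-1)}$ denote the GFF on $V_i$ obtained by adding $\{x_1,\ldots,x_{i-1}\}$ to the killing set of $X$, with associated loop soup $\mathcal{L}^{V_i}_{1/2}$. By the domain Markov property of the GFF and the corresponding Poissonian splitting of $\mathcal{L}_{1/2}$ into the independent sub-soups $\mathcal{L}^{(x_1,\ldots,x_{i-1})}_{1/2}$ (loops hitting $\{x_1,\ldots,x_{i-1}\}$) and $\mathcal{L}^{V_i}_{1/2}$, Le Jan's isomorphism yields $\frac{1}{2}\varphi_x^2 - \frac{1}{2}(\varphi^{(i-1)}_x)^2 = L_x(\mathcal{L}^{(x_1,\ldots,x_{i-1})}_{1/2})$ for $x \in V_i$.

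For the base case $i=1$, condition on $\varphi_{x_1}$; using the invariance of the GFF--loop soup coupling under a global sign flip on the sign-cluster of $x_1$, one may assume $\varphi_{x_1} > 0$, so that $\sigma_{x_1} = +1$ as required by Theorem \ref{thm-Poisson}. Setting $u := \varphi_{x_1}^2/2$, Theorem \ref{thm-Poisson} applied with $\check\varphi = \varphi$ identifies $(\check X_{1,\check T_1 - t})_{0 \leq t \leq \check T_1}$, conditionally on $\varphi$, with the excursion path $(X_t)_{0 \leq t \leq \tau^{x_1}_u}$ coupled to $\varphi$ exactly as in Theorem \ref{Lupu}, and in particular gives $\check T_1 < \infty$ and $\check X_{1,\check T_1} = x_1$. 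Partitioning this excursion path by an independent Poisson--Dirichlet partition $PD(0,1/2)$ then produces, via Proposition \ref{PropPD} with $\alpha = 1/2$, the collection of all loops of $\mathcal{L}_{1/2}$ visiting $x_1$, rooted at $x_1$.

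For the inductive step, assume the claim for indices $<i$ and condition on the evolution up to time $\check{T}^{+}_{i-1}$. By induction the family $(\check X_{j,\cdot})_{j<i}$ reproduces $\mathcal{L}^{(x_1,\ldots,x_{i-1})}_{1/2}$, so $L_x(\check{T}^{+}_{i-1}) = L_x(\mathcal{L}^{(x_1,\ldots,x_{i-1})}_{1/2})$ and therefore $\check\Phi_x(\check{T}^{+}_{i-1}) = |\varphi^{(i-1)}_x|$ for $x \in V_i$, while $\check\Phi_x(\check{T}^{+}_{i-1}) = 0$ on $\{x_1,\ldots,x_{i-1}\}$. The latter freezes $\check n_e(t) \equiv 0$ on every edge touching $\{x_1,\ldots,x_{i-1}\}$ for $t \geq \check{T}^{+}_{i-1}$, so the subsequent dynamics is confined to the graph induced by $V_i$ with killing inherited from the removed vertices, which is precisely the setting of $\varphi^{(i-1)}$. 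Moreover, by the independent increments property of each $N_e$, the residual stack $\check n_e(\check{T}^{+}_{i-1})$ on a same-sign edge $e \subset V_i$ is a Poisson variable with mean $2W_e|\varphi^{(i-1)}_{e_-}\varphi^{(i-1)}_{e_+}|$, independent across edges. This is exactly the initial data prescribed by Theorem \ref{thm-Poisson} at stage $i$ relative to $\varphi^{(i-1)}$, so the base-case argument applied inside $V_i$ (again with a preliminary sign flip on the cluster of $x_i$ if needed) yields the required concatenation of loops of $\mathcal{L}^{V_i}_{1/2}$ visiting $x_i$ but none of $x_1,\ldots,x_{i-1}$.

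The main obstacle is the bookkeeping needed to show that at time $\check{T}^{+}_{i-1}$ the triple $(\sigma|_{V_i}, \check\Phi(\check{T}^{+}_{i-1}), \check\ccc(\check{T}^{+}_{i-1})|_{V_i})$ together with the residual Poisson stacks is jointly distributed as the coupled system associated to $\varphi^{(i-1)}$ on $V_i$---reconciling in particular the fact that the signs in the iterative construction are frozen from the original $\varphi$, whereas the natural signs at stage $i$ are those of $\varphi^{(i-1)}$. This reconciliation rests on Lupu's sign-cluster structure (Theorem \ref{thm:Lupu}) and the independence of signs across sign-clusters, which makes the global sign flip on the cluster of $x_i$ measure-preserving. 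Once it is in place, summing the occupation fields of the $(\check X_{i,\cdot})$ over $i$ recovers $\frac{1}{2}\varphi_x^2 = L_x(\mathcal{L}_{1/2})$, while the terminal edge configuration $\check\ccc$ together with $\sigma$ reproduces the cluster structure of $\mathcal{O}_+(\mathcal{L}_{1/2})$, so that the coupling between $\varphi$ and the reconstructed loop soup coincides with that of Proposition \ref{PropIsoLupuLoops}.
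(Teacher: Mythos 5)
Your proposal is essentially the paper's own argument: the paper proves Theorem \ref{ThmPoissonLoopSoup} precisely ``by induction, using Theorem \ref{thm-Poisson}'', i.e.\ stage $i$ is Theorem \ref{thm-Poisson} applied on the graph with $x_1,\dots,x_{i-1}$ pinned to zero (killed), with Proposition \ref{PropPD} at $\alpha=1/2$ recovering the individual loops and the restriction property of loop soups identifying the stage-$i$ loops as those visiting $x_i$ but none of $x_1,\dots,x_{i-1}$; your sign-flip reduction to $\sigma_{x_i}=+1$ and the Gamma law of $u_i=\varphi_{x_i}^2/2$ are the right way to pass from the pinned field of Theorem \ref{thm-Poisson} to the unconditioned $P_\varphi$.

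One justification in your inductive step is inaccurate as stated: the residual stacks $\check n_e(\check T^+_{i-1})$ are \emph{measurable} with respect to the evolution up to time $\check T^+_{i-1}$ (the observer sees the stack heights at all times, and the dynamics genuinely reads the events $\{\check n_e(t)>0\}$ through the cluster-creation rule), so ``independent increments of $N_e$'' cannot give that they are Poisson conditionally on the full past. What is true, and what the induction needs, is a statement in law: conditionally on the terminal field $(\sigma'_x\check\Phi_x(\check T^+_{i-1}))_{x\in V_i}$ alone, the triple (field, configuration $\check\ccc(\check T^+_{i-1})$, stack values) has exactly the law of the stage-$i$ initial data. This follows not from Theorem \ref{thm-Poisson} as stated (which only describes $(\varphi^{(0)},\mathcal{O}(\varphi^{(0)}))$) but from its refinement Theorem \ref{thm-Poisson2} together with Lemma \ref{end-distrib}: the inversion identifies the whole process $(\check X_{\check T-t},\check n_e(\check T-t))$ with $(X_t,n_e(t))_{t\le\tau_u^{x_0}}$, so the terminal stacks correspond to the forward initial stacks, which given $\varphi^{(0)}$ are independent Poisson variables of mean $2J_e(\Phi)$ on same-sign edges. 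With that substitution your proof is complete and coincides with the paper's.
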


\begin{remark}
\label{RemInvCurrentFKIsing}
One could consider the discrete time version of the procedure described in
Theorem \ref{ThmPoissonLoopSoup}, and look only at the total number of times $\hat{n}_{e}$ an edge $e$ is visited by the trajectories constructed, without distinguishing the directions. Than $(\hat{n}_{e})_{e\in E}$ is a current, and its conditional distribution given $\check{\mathcal{C}}(0)$ is the same as the conditional distribution of a random current given an FK-Ising cluster when both are coupled as in Theorem \ref{RCFKIsing}.
\end{remark}

\begin{corollary}
\label{CorPoissonLoopSoup2}
Let $\varphi$ be a GFF on $\mathcal{G}$ with the law $P_{\varphi}$. Let $(\mathfrak{n}_{e}(\varphi))_{e\in E}$
be a family of r.v., distributed conditional on $\phi$ as independent Poisson r.v., each one with mean
$W_{e}\varphi_{e_{-}}\varphi_{e_{+}}$ if
$\varphi_{e_{-}}\varphi_{e_{+}}>0$, $0$ otherwise. One can couple $\varphi$, $(\mathfrak{n}_{e}(\varphi))_{e\in E}$ and a loop-soup $\mathcal{L}_{1/2}$ such that the coupling between $\mathcal{L}_{1/2}$ and $\varphi$ is that of
Theorem \ref{PropIsoLupuLoops}, and moreover, a.s. for every $e\in E$,
$\mathsf{N}_{e}(\mathcal{L}_{1/2})
\in\lbrace\mathfrak{n}_{e}(\varphi)-1,
\mathfrak{n}_{e}(\varphi),\mathfrak{n}_{e}(\varphi)+1\rbrace$.
\end{corollary}

\begin{proof}
We use the construction of Theorem \ref{ThmPoissonLoopSoup}. Conditional on $\varphi$, we have independent Poisson stacks with mean
$\indic_{\{\varphi_{e_{-}}\varphi_{e_{+}}>0\}}2 
W_{e}\varphi_{e_{-}}\varphi_{e_{+}}$
and each time we unpile a stack, we chose with probability $1/2$ to jump (which gives a Poisson r.v. with mean
$\indic_{\{\varphi_{e_{-}}\varphi_{e_{+}}>0\}} 
W_{e}\varphi_{e_{-}}\varphi_{e_{+}}$), except possibly when the stack is reduced to $1$, when our choice might be constrained
(which gives $\pm 1$).
\end{proof}

\section{Proof of theorem \ref{thm-Poisson} }
\label{sec:proof}
\subsection{Case of finite graph without killing measure}
\label{sec:pfinite}
Here we will assume that $V$ is finite and that the killing measure
$\kappa\equiv 0$.

In order to prove Theorem \ref{thm-Poisson}, we first enlarge the state space of the process $(X_t)_{t\ge 0}$. We define a process
$(X_t,(n_e(t)))_{t\ge 0}$ living on the space $V\times {\mathbb N}^E$ as follows. Let 
$\varphi^{(0)}\sim P_{\varphi}^{\{x_0\},0}$ be a GFF pinned at $x_0$.
Let $\sigma_x=\hbox{sign}(\varphi^{(0)}_x)$ be the signs of the GFF with the convention that $\sigma_{x_0}=+1$.
The process $(X_t)_{t\ge 0}$ is as usual the Markov Jump process starting at $x_0$ with jump rates $(W_e)_{e\in E}$. We set
\begin{equation}
\label{Phi-J}
\Phi_x=\vert\varphi^{(0)}_x\vert, ~~\Phi_{x}(t)=\sqrt{\Phi_x^2+2\ell_x(t)}, ~\forall x\in V, \qquad J_e(\Phi(t))=W_e \Phi_{e_{-}}(t)\Phi_{e_{+}}(t), ~\forall e\in E.
\end{equation}
The initial values $(n_e(0))$ are choosen independently on each edge with distribution
\begin{equation}
\label{Eq_n_0}
n_e(0)\sim
\begin{cases}
0& \hbox{ if $\sigma_{e_-}\sigma_{e_+}=-1,$}
\\
\mathcal{P}(2J_e(\Phi))& \hbox{ if $\sigma_{e_-}\sigma_{e_+}=+1,$}
\end{cases}
\end{equation}
where ${\mathcal{P}}(2J_e(\Phi))$ is a Poisson random variable with parameter $2J_e(\Phi)$. Let $((N_e(v))_{v\ge 0})_{e\in E}$ be independent Poisson point processes 
on $\R_+$ with intensity 1. We define the process $(n_e(t))$ by
$$
n_e(t)=n_e(0)+N_e(J_e(\Phi(t)))-N_e(J_e(\Phi))+K_e(t),
$$
where $K_e(t)$ is the number of crossings of the edge $e$ by the Markov jump process $X$ before time $t$.
\begin{remark}
Note that compared to the process defined in Section \ref{sec_Poisson}, the speed of the Poisson process is related to $J_e(\Phi(t))$ and not $2J_e(\Phi(t))$.
\end{remark}
We recall that with our notations,
$$
\ccc(n(t))=\{e\in E\vert n_e(t)>0\}.
$$
Recall also that $\tau_u^{x_0}=\inf\{t\ge 0\vert\ell_{x_0}(t)=u\}$ for $u>0$. 
We define $\varphi^{(u)}$ by
$$
\varphi^{(u)}_x=\sigma_x\Phi(\tau_{u}^{x_{0}}), ~\forall x\in V,
$$
where  $(\sigma_x)_{x\in V}\in \lbrace -1,+1\rbrace^{V}$ are random spins sampled uniformly independently on 
each cluster induced by $\ccc(n(\tau_{u}^{x_{0}}))$ with the condition that $\sigma_{x_0}=+1$.

\begin{lemma}
\label{end-distrib}
The random family $(\varphi^{(0)}, \ccc(n(0)), \varphi^{(u)}, \ccc(n(\tau_u^{x_0})))$ thus defined has the same distribution
 as $(\varphi^{(0)}, \ooo(\varphi^{(0)}), \varphi^{(u)}, \ooo_u)$ defined in Theorem \ref{Lupu}.
\end{lemma}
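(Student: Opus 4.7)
The plan is to match the two four-tuples coordinate by coordinate, exploiting the independence structure built into the enlarged construction of Section \ref{sec:pfinite}. First I would verify that $(\varphi^{(0)}, \ccc(0))$ has the same joint law as $(\varphi^{(0)}, \ooo(\varphi^{(0)}))$; then, conditioning further on the Markov path $(X_t)_{0 \le t \le \tau_u^{x_0}}$, I would identify the conditional law of $\ccc(\tau_u^{x_0})$ with that of $\ooo_u$ edge by edge; finally I would transfer the match to $\varphi^{(u)}$ via the uniform sign sampling on clusters.

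For the marginal, $\varphi^{(0)} \sim P_\varphi^{\{x_0\},0}$ in both constructions. Conditionally on $\varphi^{(0)}$, the variables $(n_e(0))_{e \in E}$ are by construction independent, equal to $0$ almost surely when $\sigma_{e_-}\sigma_{e_+} = -1$, and Poisson with parameter $2 J_e(\Phi) = 2 W_e |\varphi^{(0)}_{e_-} \varphi^{(0)}_{e_+}|$ when $\sigma_{e_-}\sigma_{e_+} = +1$. Hence $\mathbb{P}(e \notin \ccc(0) \mid \varphi^{(0)})$ equals $1$ when the endpoints of $e$ carry opposite signs and $\exp(-2 W_e \varphi^{(0)}_{e_-} \varphi^{(0)}_{e_+})$ when they carry the same sign, which is precisely the conditional law identified in Lemma \ref{lem:fki} and used in Definition \ref{def_FK-Ising}.

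Next I would condition further on the Markov path $(X_t)_{0 \le t \le \tau_u^{x_0}}$, which is independent of $\varphi^{(0)}$ and of the Poisson objects $(n_e(0), N_e)_{e \in E}$, exactly as the Markov chain in the formulation of Theorem \ref{Lupu}. Since $t \mapsto n_e(t)$ is non-decreasing (both $J_e(\Phi(t))$ and the crossing count $K_e(t)$ are), one has the decomposition $\ccc(\tau_u^{x_0}) = \ccc(0) \cup \{e \in E : K_e(\tau_u^{x_0}) > 0\} \cup \mathcal{A}$, where $\mathcal{A} = \{e \in E : n_e(0) = 0,\ K_e(\tau_u^{x_0}) = 0,\ N_e(J_e(\Phi(\tau_u^{x_0}))) - N_e(J_e(\Phi)) \ge 1\}$. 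Because each $N_e$ is independent across edges and independent of $\varphi^{(0)}$, $(n_e(0))_e$ and $X$, the events $\{e \in \mathcal{A}\}$ are conditionally independent given $(\varphi^{(0)}, \ccc(0), X_{[0,\tau_u^{x_0}]})$, each with probability $1 - \exp(-(J_e(\Phi(\tau_u^{x_0})) - J_e(\Phi)))$. After substituting the definitions of $J_e$ and $\Phi$, this is exactly the probability that $e$ is an additional open edge in Theorem \ref{Lupu}, so the conditional law of $\ccc(\tau_u^{x_0})$ matches that of $\ooo_u$.

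The last coordinate is then immediate: both constructions take $|\varphi^{(u)}| = \sqrt{(\varphi^{(0)})^2 + 2 \ell(\tau_u^{x_0})}$ and sample the signs uniformly and independently on each cluster of the final edge configuration, pinned so that $\sigma_{x_0} = +1$; once the edge configurations have the same conditional law, so do the signed fields. The main technical point to keep an eye on is the independence between $n_e(0)$ and the Poisson increment $N_e(J_e(\Phi(\tau_u^{x_0}))) - N_e(J_e(\Phi))$: because $n_e(0)$ is defined as an independent Poisson variable rather than as a count of points of the same $N_e$, conditioning on $\{n_e(0) = 0\}$ does not bias the future increments of $N_e$, and this clean separation is what makes the conditional Bernoulli description of $\mathcal{A}$ carry exactly the parameter required to match Theorem \ref{Lupu}.
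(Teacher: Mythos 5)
Your proof is correct and takes essentially the same route as the paper's own (terser) argument: you match $\ccc(0)$ with $\ooo(\varphi^{(0)})$ through the conditional probabilities $1-e^{-2J_e(\Phi)}$ of Lemma \ref{lem:fki}, then decompose $\ccc(\tau_u^{x_0})$ as the union of $\ccc(0)$, the edges crossed by $X$, and the edges with a positive Poisson increment, whose conditional Bernoulli parameter $1-e^{-(J_e(\Phi(\tau_u^{x_0}))-J_e(\Phi))}$ is exactly the opening probability in Theorem \ref{Lupu}, before transferring to $\varphi^{(u)}$ via the uniform signs on clusters. Your explicit flagging of the independence between $n_e(0)$ and the increments of $N_e$ is a sound elaboration of a point the paper leaves implicit.
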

\begin{proof}
It is clear from construction, that $\ccc(n(0))$ has the same law as 
$\ooo(\varphi^{(0)})$ (cf Definition \ref{def_FK-Ising}), the FK-Ising configuration coupled with the signs of
$\varphi^{(0)}$ as in Theorem \ref{FK-Ising}. Indeed, for each edge $e\in E$ such that $\varphi^{(0)}_{e_-}\varphi^{(0)}_{e_+}>0$, the probability that
$n_e(0)>0$ is $1-e^{-2J_e(\Phi)}$.
Moreover, conditional on $\ccc(n(0))=\ooo(\varphi^{(0)})$, 
$\ccc(n(\tau_u^{x_0}))$ has the same law as $\ooo_u$ defined in Theorem \ref{Lupu}. Indeed, $\ccc(n(\tau_u^{x_0}))$
is the union of the set $\ccc(n(0))$, the set of edges crossed by the process $(X_u)_{u\le \tau_u^{x_0}}$, and the additional edges such that $N_e(J_e(\tau_u^{x_0}))-N_e(J_e(\Phi))>0$.
Clearly $N_e(J_e(\tau_u^{x_0}))-N(J_e(\Phi))>0$ independently with probability $1-e^{-(J_e(\Phi(\tau_u^{x_0}))-J_e(\Phi))}$ which coincides with the probability given in
Theorem \ref{Lupu}. 
\end{proof}

We will prove the following theorem that, together with Lemma \ref{end-distrib}, contains the statements of both Theorem \ref{Lupu} and \ref{thm-Poisson}.
\begin{theorem}\label{thm-Poisson2}
The random field $\varphi^{(u)}$ is a GFF distributed according to $P_{\varphi}^{\{x_0\},\sqrt{2u}}$.
Moreover, conditional on $\varphi^{(u)}=\check \varphi$, the process 
$$(X_{t},(n_{e}(t))_{e\in E})_{t\le \tau_u^{x_0}}$$ 
has the law of the process $(\check X_{\check T-t },(\check n_e(\check T -t))_{e\in E})_{t\le \check T}$
described in Section \ref{sec_Poisson}.
\end{theorem}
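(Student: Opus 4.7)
The plan is to prove Theorem~\ref{thm-Poisson2} by noting that the marginal law of $\varphi^{(u)}$ follows immediately from Lemma~\ref{end-distrib} combined with Theorem~\ref{Lupu}; the heart of the argument is to determine the conditional law of the extended trajectory $(X_t, (n_e(t))_{e\in E})_{0\le t\le \tau_u}$ given $\varphi^{(u)}=\check\varphi$, and to identify it with the time-reversal of the self-interacting process $(\check X_t, \check n_e(t))$ built in Section~\ref{sec_Poisson}.

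First I would observe that the triple $(X_t, (n_e(t))_{e\in E}, (\Phi_x(t))_{x\in V})$ is a Markov process on the enlarged state space $V\times \N^E\times \R_+^V$: $\Phi_x(t)=\sqrt{\Phi_x^2+2\ell_x(t)}$ evolves deterministically given $X$, and $(n_e(t))$ moves through two independent mechanisms. Namely, if $X_t=x$ and $e=\{x,y\}$, then $n_e$ increases by $1$ at rate $W_e$ together with a jump of $X$ to $y$ (a crossing), and also at the pure-Poisson rate $dJ_e(\Phi(t))/dt = W_e\Phi_y(t)/\Phi_x(t)$ without $X$ moving. For edges not incident to $X_t$, $n_e$ does not change. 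This yields an explicit forward generator on $(x,n,\Phi)$.

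The key step is a time-reversal computation conditional on $\varphi^{(u)}=\check\varphi$, which by Lemma~\ref{end-distrib} equivalently fixes $\Phi(\tau_u)=|\check\varphi|$ together with the sign configuration $\sigma$ and the constraint that $n_e(\tau_u)=0$ on every edge with $\sigma_{e_-}\sigma_{e_+}=-1$. I would introduce the conditional density $h_t(x,n,\Phi):=\Pb(\varphi^{(u)}=\check\varphi \mid X_t=x,n(t)=n,\Phi(t)=\Phi)$ and perform a Doob $h$-transform of the forward generator, followed by a standard time-reversal formula $\hat q(y,x) = \mu_t(x)q(x,y)/\mu_t(y)$ for jump processes. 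The forward mechanism above has two types of events generating an increment of $n_e$ when $X_t=x$ and $e=\{x,y\}$: crossings (rate $W_e$, also moving $X$) and Poisson pulses (rate $W_e\Phi_y/\Phi_x$, not moving $X$). Reading these two streams backwards should produce, on the edge $e$, a single Poisson decrement stream of total rate $-2\,dJ_e(\check\Phi(t))/dt = 2W_e\check\Phi_y(t)/\check\Phi_x(t)$, with the reverse-crossing contribution ($\check X$ jumps to $y$) and the reverse-Poisson contribution ($\check X$ stays at $x$) equally likely --- exactly the $1/2$--$1/2$ split of Proposition~\ref{prop-jump}. In particular this recovers the factor $2$ in the definition $\check n_e(t) = N_e(2J_e(t))$ of Section~\ref{sec_Poisson} and the vanishing of $\check n_e$ on edges where $\sigma_{e_-}\sigma_{e_+}=-1$.

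The main obstacle is the handling of the instants where $n_e(t)$ drops from $1$ to $0$ in the reverse dynamics, which is where a new connected component of $\check\ccc$ is created. At such instants the reverse-generator $h$-transform must force $\check X$ to sit on the endpoint of $e$ connected to $x_0$ in the new configuration, with probability one, in contrast with the $1/2$--$1/2$ rule that applies otherwise. Concretely this corresponds, in the forward picture, to $X$ making its very first visit to the cluster containing the other endpoint of $e$ through the edge $e$, so that the ratio $h_{t-}/h_{t+}$ degenerates and selects the right endpoint deterministically. Once this boundary case is settled, matching forward and reverse rates establishes the identity in law on a finite graph without killing measure; the general case in Section~\ref{sec:pgen} will then follow by the absorption/exhaustion procedure already used in the proof of Theorem~\ref{Lupu}.
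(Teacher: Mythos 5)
Your outline follows the same broad strategy as the paper (enlarge the state space so that $(X_t,\Phi(t),n(t))$ is Markov, then combine time reversal with a Doob $h$-transform), and your use of Lemma \ref{end-distrib} plus Theorem \ref{Lupu} for the marginal of $\varphi^{(u)}$ is legitimate, since Theorem \ref{Lupu} has an independent proof in Section \ref{sec:glupu}. But there is a genuine gap: the two objects your argument hinges on, the conditional density $h_t$ and the time-$t$ marginal $\mu_t$, are left uncomputed, and they are not accessible by "standard formulas" --- the entire content of the theorem is concentrated in them. First, conditioning on $\varphi^{(u)}=\check\varphi$ is \emph{not} merely the event constraint you state ($\Phi(\tau_u)=|\check\varphi|$ and $n_e(\tau_u)=0$ on discordant edges): since the signs are sampled uniformly on clusters of $\mathcal{C}(\tau_u)$, the conditioning reweights the trajectory law by the cluster-counting factor $2^{-\#\mathcal{C}(n(\tau_u))+1}$ (this is the function $H$ in the paper's Step 3). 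Correspondingly, the law of $(\Phi(t),n(t))$ carries a factor $2^{\#\mathcal{C}(n)-1}$ together with the indicator that the walker lies in the cluster of $x_0$. Without identifying this factor, your three claimed reversed features --- the $1/2$--$1/2$ split, the total decrement rate $-2\,dJ_e/dt$, and above all the probability-one forced move at cluster-splitting times --- remain assertions rather than consequences. For instance, the forced move occurs at the \emph{full} rate $\check n_e/\check\Phi_x^2$ rather than at half rate, exactly because deleting the last particle on a separating edge changes $\#\mathcal{C}$ by one and hence the density by a factor $2$; your "degeneration of $h_{t-}/h_{t+}$" explains why the walker cannot land on the wrong side, but not why the rates still add up to a single Poisson stream of intensity $2J_e$.

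The paper fills this gap with three explicit computations that your plan would have to reproduce in some form. Lemma \ref{distrib-phi-n} gives the closed-form joint density of $(\Phi,n(0))$, including the $2^{\#\mathcal{C}(n)-1}$ factor. Then, rather than reversing the conditioned, time-inhomogeneous process (which would require $\mu_t$), the paper reverses only the driving chain $X$ at the inverse local time $\tau_u$ --- classically again a Markov jump process with the same rates $W_e$ --- and transfers $(\Phi,n)$ through an explicit change of variables whose Jacobian $\prod_{x\neq x_0}\tilde\Phi_x/\tilde\Phi_x(\tilde T)$ is responsible, jointly with the ratio $\Phi_y/\Phi_x$ between the two forward increment rates, for the exact $1/2$--$1/2$ split (Lemma \ref{change-var}). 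Finally the conditioned reversed law is identified as an explicit $h$-transform with $h=\Theta$, where $\Theta$ contains the density $F$, the factor $2^{\#\mathcal{C}-1}$, the connectivity indicator $x\in\mathcal{C}(x_0,n)$, and $\prod_x\Phi_x^{-1}$, and the verification is the generator intertwining $\Theta^{-1}\tilde L(\Theta f)=\check L f$ of Lemma \ref{RN} --- precisely the rate-matching you assert but do not check. Completing your route would amount to proving that $\mu_t$ retains the product-times-$2^{\#\mathcal{C}}$ form for all $t$, which is the same computation in different clothing; as it stands, the core of the proof is missing.
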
 

\begin{proof}
Before proceeding to the proof of the theorem, we will briefly outline
our method. We have a Markov process
$(X_{t},\Phi(t),n(t))$, and would like to find another Markov process
$(\check X_{t},\check\Phi(t),\check n(t))$ such that the latter has the law of
$(X_{\tau_u^{x_0}-t},\Phi(\tau_u^{x_0}-t),n(\tau_u^{x_0}-t))$ in the particular case when the entrance (initial) distribution of
$(X_{0},\Phi(0),n(0))$ is given by 
$(x_{0},\vert\varphi^{(0)}\vert,n(0))$, $n(0)$ given by
\eqref{Eq_n_0}. To this end, we will first introduce an intermediate Markov process
$(\bar X_{t},\bar\Phi(t),\bar n(t))$ which will correspond to
$(X_{\tau_u^{x_0}-t},\Phi(\tau_u^{x_0}-t),n(\tau_u^{x_0}-t))$ in case when the entrance distribution of $(X_{0},\Phi(0),n(0))$ is not the one we are interested in, but given by $X_{0}=x_{0}$ and $(\Phi(0),n(0))$
following the product measure (with infinite total mass)
\begin{displaymath}
\sum_{n\in \N^{E}}\int d\Phi F(\Phi,n).
\end{displaymath}
We do that because $(\bar X_{t},\bar\Phi(t),\bar n(t))$ is simpler, and in particular $\bar X_{t}$ is a Markov jump process with jump rates
$(W_{e})_{e\in E}$ (just as $X_{t}$) which does not interact with
$(\bar\Phi(t),\bar n(t))$. In this way 
$(\bar X_{t},\bar\Phi(t),\bar n(t))$ and 
$(\check X_{t},\check\Phi(t),\check n(t))$ correspond to
$(X_{\tau_u^{x_0}-t},\Phi(\tau_u^{x_0}-t),n(\tau_u^{x_0}-t))$ for two different entrance distributions of 
$(X_{0},\Phi(0),n(0))$, and we show that
$(\check X_{t},\check\Phi(t),\check n(t))$ is absolutely continuous with respect to
$(\bar X_{t},\bar\Phi(t),\bar n(t))$ and identify the corresponding
Radon-Nikodym derivative $\bar M_{t\wedge \bar T}/\bar M_{0}$. Out of this we further identify $(\check X_{t},\check\Phi(t),\check n(t))$ as
a Doob's h-transform (see \cite{ChungWalsh05MP}, Chapter 11) of 
$(\bar X_{t},\bar\Phi(t),\bar n(t))$. Then we obtain the infinitesimal generator of 
$(\check X_{t},\check\Phi(t),\check n(t))$
as a conjugate of the infinitesimal generator of
$(\bar X_{t},\bar\Phi(t),\bar n(t))$.

Our proof is cut in four steps. In step 1 we simply give an explicit formula for the law of $(\vert\varphi^{(0)}\vert, n(0))$, 
$n(0)$ given by \eqref{Eq_n_0}. In step 2 we introduce
$(\bar X_{t},\bar\Phi(t),\bar n(t))$ and prove the time reversal for the product entrance distribution of 
$(X_{0},\Phi(0),n(0))$. In step 3 we identify the Radon-Nikodym derivative of $(\check X_{t},\check\Phi(t),\check n(t))$ with respect to $(\bar X_{t},\bar\Phi(t),\bar n(t))$. In step 4 we describe 
$(\bar X_{t},\bar\Phi(t),\bar n(t))$ as 
a Doob's h-transform of $(\bar X_{t},\bar\Phi(t),\bar n(t))$ and give its the infinitesimal generator, which is that of the process introduced in Section \ref{sec_Poisson}.

\medskip

\noindent{\bf Step 1 :}
We start by a simple lemma.
\begin{lemma}\label{distrib-phi-n}
The distribution of $(\Phi:=\vert \varphi^{(0)}\vert, n(0))$ is given by the following formula for any bounded measurable test function $h$:
\begin{multline*}
\E\left(h(\Phi, n(0))\right)= \\\sum_{n\in \N^E} \int_{\R_+^{V\setminus\{x_0\}}} d\Phi  h(\Phi, n) 
e^{-\demi \sum_{x\in V} W_x\Phi_x^2-\sum_{e\in E} J_e(\Phi)}
\left(\prod_{e\in E}{\frac{(2J_e(\Phi))^{n_e}}{n_e!}}\right)
2^{\#c.\ccc(n)-1}.
\end{multline*}
where the integral is on the set 
$\{(\Phi_x)_{x\in V}\in \R_{+}^{V}\vert \forall x\neq x_0,~
\Phi_x>0,~\Phi_{x_0}=0\}$, and 
$$d\Phi={\frac{\prod_{x\in V\setminus\{x_0\}} d\Phi_x}{\sqrt{2\pi}^{\vert V\vert -1}}},$$
and $\#c.\ccc(n)$ is the number of clusters 
induced by the edges such that $n_e>0$.
\end{lemma}
\begin{proof}
Indeed, by construction, summing on possible signs of $\varphi^{(0)}$, we have
\begin{eqnarray}
\nonumber
&&\E\left(h(\Phi, n(0))\right)=
\\
\label{int-eee}&&
\sum_{\substack{\sigma\in \{\pm 1\}^V
\\\sigma_{x_0}=+1}}
 \sum_{\substack{n\in \N^{E}\\n\ll \sigma}}
 \int_{\R_+^{V\setminus\{x_0\}}} d\Phi  h(\Phi, n) e^{-\demi \eee(\sigma\Phi,\sigma\Phi)}\Bigg(
 \prod_{\substack{e\in E\\\sigma_{e_-}\sigma_{e_+}=+1}} {e^{-2J_e(\Phi)} (2J_e(\Phi))^{n_e}\over n_e!}\Bigg),
 \end{eqnarray}
where 
$n\ll \sigma$ means that $n_e$ vanishes on the edges
such that  $\sigma_{e_-}\sigma_{e_+}=-1$. Since we have,
similarly to \eqref{EqDirichlet},
\begin{eqnarray*}
\demi\eee(\sigma \Phi,\sigma \Phi)&=& \demi\sum_{x\in V} W_x \Phi_x^2-\sum_{e\in E} J_e(\Phi)\sigma_{e_-}\sigma_{e_+}
\\
&=&
 \demi\sum_{x\in V} W_x \Phi_x^2+\sum_{e\in E} J_e(\Phi) 
 -\sum_{\substack{e\in E\\\sigma_{e_-}\sigma_{e_+}=+1}} 2J_e(\Phi),
\end{eqnarray*}
we deduce that the integrand in \eqref{int-eee} is equal to
\begin{eqnarray*}
 && h(\Phi,n) e^{-\demi \eee(\sigma\Phi,\sigma \Phi)}
 \Bigg(\prod_{\substack{e\in E\\\sigma_{e_-}\sigma_{e_+}=+1}} {e^{-2J_e(\Phi)} (2J_e(\Phi))^{n_e}\over n_e!}\Bigg)
 \\
 &=&
h(\Phi,n)   e^{-\demi \eee(\sigma\Phi,\sigma \Phi)}
e^{-\sum_{e\in E, \; \sigma_{e_-}\sigma_{e_+}=+1} 2J_e(\Phi)}\left(\prod_{e\in E} {(2J_e(\Phi))^{n_e}\over n_e!}\right)
 \\
  &=&
 h(\Phi,n)  e^{-\demi\sum_{x\in V} W_x \Phi_x^2-\sum_{e\in E} J_e(\Phi)}\left(\prod_{e\in E} {(2J_e(\Phi))^{n_e}\over n_e!}\right),
\end{eqnarray*}
where we used in the first equality that $n_e=0$ on the edges such that  
$\sigma_{e_-}\sigma_{e_+}=-1$.
Thus,
\begin{multline*}
\E\left(h(\Phi, n(0))\right)=\\
\sum_{\substack{\sigma\in \{\pm 1\}^V
\\\sigma_{x_0}=+1}}
 \sum_{\substack{n\in \N^{E}\\n\ll \sigma}}
 \int_{\R_+^{V\setminus\{x_0\}}} d\Phi h(\Phi, n) e^{-\demi\sum_{x\in V} W_x \Phi_x^2-\sum_{e\in E} J_e(\Phi)}\left(\prod_{e\in E} {(2J_e(\Phi))^{n_e}\over n_e!}\right).
\end{multline*}
Reversing the sum on $\sigma$ and $n$ and summing on the number of possible signs which are constant on  clusters induced by the configuration of edges
$\{e\in E\vert n_e>0\}$,
we deduce Lemma \ref{distrib-phi-n}.
\end{proof}
\noindent{\bf Step 2 :} We denote by $Z_t=(X_t, \Phi(t), n(t))$ the process defined previously and by
$E_{x_0, \Phi, n_0}$ its law with initial condition $(x_0, \Phi, n)$. 

We now introduce a process $\bar{Z}_t$, which is a 
``time reversal'' of the process $Z_t$. This process will be related to the 
process defined in Section \ref{sec_Poisson} in Step 4, Lemma \ref{RN}.

For $(\bar n_e)_{e\in E}\in \N^E$ and $(\bar \Phi_x)_{x\in V}$ such that
$$
\bar \Phi_{x_0}=u, ~\forall x\neq x_0,~\bar \Phi_x>0,
$$
we define the process $\bar Z_t=(\bar X_t, \bar\Phi(t), \bar n(t))$  with values in $V\times \R_+^V\times \Z^E$  as follows.
The process $\bar X_t$ is a Markov jump process with jump rates 
$(W_e)_{e\in E}$ (so that $\bar X\stackrel{\text{law}}{=} X$), and
$\bar\Phi(t)$, $\bar n(t)$ are defined by
\begin{eqnarray}\label{tildePhi}
\bar \Phi_x(t)=\sqrt{\bar \Phi_x^2-2\bar \ell_x(t)},
~\forall x\in V,
\end{eqnarray}
where $\bar\ell_x(t)$ is the local time of the process $\check X$ up to time $t$,
\begin{eqnarray}\label{tilden}
\bar n_e(t)= \bar n_e-\left(N_e(J_e(\bar\Phi))-N_e(J_e(\bar\Phi(t)))\right)-\bar K_e(t),
\end{eqnarray}
where $((N_e(v))_{v\ge 0})_{e\in E}$ are independent Poisson point process on $\R_+$ with intensity 1 for each edge $e$, and 
$\bar K_e(t)$ is the number of crossings of the edge $e$ by the process $\bar X$ before time $t$.
We set
\begin{eqnarray}\label{tildeZ}
\bar Z_t=(\bar X_t, (\bar \Phi_x(t)), (\bar n_e(t))).
\end{eqnarray}
This process is well-defined up to time
$$
\bar T=\inf\left\{t\ge 0\vert\exists x\in V~\bar \Phi_x(t)=0\right\}.
$$
We denote by $\bar E_{x_0, \bar\Phi, \bar n}$ its law. Clearly $\bar Z_t=(\bar X_t, \bar\Phi(t), \bar n_e(t))$ is a Markov process, we will later on make explicit its generator. 

We have the following change of variable lemma.
\begin{lemma}\label{change-var}
For all bounded measurable test functions $F,G,H$
\begin{multline*}
\sum_{n\in \N^E} \int d\Phi F(\Phi, n)E_{x_0,\Phi,n}
\left( G((Z_{\tau_u^{x_0}-t})_{0\le t\le\tau_u^{x_0}})
H(\Phi(\tau_u^{x_0}), n(\tau_u^{x_0}))\right)=
\\
\sum_{\bar n\in \N^E} \int d\bar\Phi H(\bar\Phi, \bar n)
\bar E_{x_0,\bar \Phi,\bar n}
\Big(\indic_{\{\bar X_{\bar T}=x_0,~
\forall e\in E,~
\bar n_e(\bar T)\ge 0\}} 
G((\bar Z_{t})_{t\le\bar T})
F(\bar\Phi(\bar T), \bar n(\bar T))\prod_{x\in V\setminus\{x_0\}} {\bar \Phi_x\over \bar\Phi_x(\bar T) }\Big),
\end{multline*}
where the integral on the l.h.s. is on the set 
$\{(\Phi_x)_{x\in V}\in \R_+^V \vert \Phi_{x_0}=0\}$ 
with 
$d\Phi= {\prod_{x\in V\setminus\{x_0\}} 
d\Phi_x\over \sqrt{2\pi}^{\vert V\vert -1}}$
and the integral on the r.h.s. is on the set 
$\{(\bar\Phi_x)_{x\in V}\in \R_+^V \vert\bar\Phi_{x_0}=u\}$ with 
$d\bar\Phi= {\prod_{x\in V\setminus\{x_0\}} d\bar\Phi_x\over \sqrt{2\pi}^{\vert V\vert -1}}$
\end{lemma}
\begin{proof} 
We start from the left-hand side, i.e. the process 
$(X_t, n_e(t))_{0\le t\le \tau_u^{x_0}}$.
We define
$$
\bar X_{t}=X_{\tau_{u}^{x_{0}}-t},\qquad \bar n_e(t)
=n_e(\tau_{u}^{x_{0}}-t),
$$
and
$$
\bar \Phi_x=\Phi_x(\tau_{u}^{x_{0}}),\qquad\bar\Phi_x(t)=\Phi_x({\tau_{u}^{x_{0}}-t}), 
$$
The law of the processes such defined will later be identified with the law of the processes ($\bar X_t, \bar \Phi(t),\bar n(t))$ defined at the beginning of step 2, see \eqref{tildePhi} and \eqref{tilden}.
We also set
$$
\bar K_e(t)= K_e(\tau_{u}^{x_{0}})-K_e(t),
$$
which is also the number of crossings of the edge $e$ by the process $\bar X$, between time 0 and $t$. With these notations we clearly have
$$
\bar \Phi_x(t)=\sqrt{\bar \Phi_x^2-2\bar \ell_x(t)},
$$
where $\bar \ell_x(t)=\int_{0}^t\indic_{\{\bar X_s=x\}} ds$ is the local time of $\bar X$ at time $t$, and
$$
\bar n_e(t)= \bar n_e(0)+(N_e(J_e(\bar \Phi(t)))-N_e(J_e(\bar\Phi(0))))-\bar K_e(t).
$$
By time reversal, the law of 
$(\bar X_t)_{0\le s\le \bar \tau_{u}^{x_{0}}}$ is the same as the law of the Markov Jump process $(X_t)_{0\le t\le \tau_{u}^{x_{0}}}$, where
$\bar \tau_{u}^{x_{0}}=\inf\{t\ge 0\vert\bar\ell_{x_0}(t)=u\}$. Hence, we see that up to the time 
$$\bar T=\inf\{t\ge 0\vert\exists x~\bar\Phi_x(t)=0\},$$ 
the process
$(\bar X_t, (\bar \Phi_x(t))_{x\in V}, (\bar n_e(t))_{e\in E})
_{t\le \bar T}$ 
has the same law as the process defined at the beginning of step 2.

Then, following \cite{SabotTarres2015RK},  we make the following change of variables conditional on the processes $(X_t, (n_e(t))_{e\in E})$:
\begin{eqnarray*}
(0,+\infty)^V\times \N^E&\rightarrow& (0,+\infty)^V\times \N^E\\
((\Phi_x)_{x\in V}, (n_e)_{e\in E})&\mapsto&
((\bar \Phi_x)_{x\in V}, (\bar n_e)_{e\in E}),
\end{eqnarray*}
which is bijective onto the set 
\begin{multline*}
\{(\bar\Phi_x)_{x\in V}\in \R_{+}^{V}\vert\bar\Phi_{x_0}=\sqrt{2u}, ~\forall x\neq x_0,~\check\Phi_x>\sqrt{2\ell_x(\tau_u^{x_0})}\} 
\\\times \{(\bar n_e)_{e\in E}\in \N^{E}\vert
\forall e\in E,~\bar n_e\ge K_e(\tau_{u}^{x_{0}})+(N_e(J_e(\bar \Phi(\tau_{u}^{x_{0}})))-N_e(J_e(\Phi)))\}.
\end{multline*}
Note that we always have $\bar \Phi_{x_0}=\sqrt{2u}$. The last conditions on $\bar \Phi$ and $\bar n_e$ are equivalent to
the conditions $\bar X_{\bar T}=x_0$ and 
$\bar n_e(\bar T)\ge 0$.
The Jacobian of the change of variable is given by
\begin{displaymath}
\prod_{x\in V\setminus\{x_0\}} d\Phi_x=\left({\prod_{x\in V\setminus\{x_0\}} {\bar\Phi_x\over \Phi_x} }\right)\prod_{x\in V\setminus\{x_0\}} d\bar\Phi_x,
\end{displaymath}
since
\begin{displaymath}
d \bar\Phi_{x}=
d \sqrt{\Phi_{x}^{2}+2\ell_{x}(\tau_{u}^{x_{0}})}=
\dfrac{\Phi_{x}}{\sqrt{\Phi_{x}^{2}+2\ell_{x}(\tau_{u}^{x_{0}})}}
~d\Phi_{x}=
\dfrac{\Phi_{x}}{\bar\Phi_{x}} ~d\Phi_{x}.
\qedhere
\end{displaymath}
\end{proof}

\noindent
{\bf Step 3:}
With the notations of Theorem \ref{thm-Poisson2}, we consider the following expectation for $g$ and $h$ bounded measurable test functions:
\begin{eqnarray}\label{test-functions}
\E\left( g\left(\left(X_{\tau_{u}^{x_{0}}-t}, n_e(\tau_{u}^{x_{0}}-t)\right)_{0\le t\le \tau_{u}^{x_{0}}}\right)h(\varphi^{(u)})\right).
\end{eqnarray}
By definition, we have
$$
\varphi^{(u)}=\sigma \Phi(\tau_{u}^{x_{0}}),
$$
where $(\sigma_x)_{x\in V}\in \{-1,+1\}^V$ are random signs sampled uniformly independently on clusters induced by
$\{e\in E\vert n_e(\tau_{u}^{x_{0}})>0\}$ and conditioned on 
$\sigma_{x_0}=+1$.
Hence, we define for $(\Phi_x)_{x\in V}\in \R_+^V$ and 
$(n_e)_{e\in E}\in \N^E$,
\begin{eqnarray}\label{h}
H(\Phi, n)=2^{-\#c.\ccc(n)+1} 
\sum_{\substack{\sigma\in\{\pm 1\}^{V}\\
\sigma_{x_{0}}=+1,~\sigma\ll n}} h(\sigma \Phi),
\end{eqnarray}
where $\sigma\ll n$ means that the signs $\sigma_x$ are constant on clusters of $\ccc(n)$.
Hence, setting
$$
F(\Phi, n)=e^{-\demi \sum_{x\in V} W_x \Phi_x^2-\sum_{e\in E} J_e(\Phi) }\left(\prod_{e\in E} {(2J_e(\Phi))^{n_e}\over n_e!}\right)
2^{\#c.\ccc(n)-1},
$$
$$
G\left((Z_{\tau_{u}^{x_{0}}-t})_{t\le\tau_{u}^{x_{0}}}\right)= g\left(\left(X_{\tau_{u}^{x_{0}}-t}, n_e(\tau_{u}^{x_{0}}-t)\right)_{t\le \tau_{u}^{x_{0}}}\right),
$$
using Lemma \ref{distrib-phi-n} in the first equality and Lemma \ref{change-var} in the second equality, we deduce that
\eqref{test-functions} is equal to 
\begin{multline}
\label{eq-3.3}
\E\left( G\left((Z_{\tau_{u}^{x_{0}}-t})_{0\le t\le \tau_{u}^{x_{0}}}\right)H(\Phi(\tau_{u}^{x_{0}}), n(\tau_{u}^{x_{0}})))\right)=
\\
\sum_{n\in \N^E} \int
d\Phi
F(\Phi, n) E_{x_0, \Phi,n}\left(G\left((Z_{\tau_{u}^{x_{0}}-t})_{t\le\tau_{u}^{x_{0}}}\right)H\left(\Phi(\tau_{u}^{x_{0}}), 
n(\tau_{u}^{x_{0}}))\right)\right)
d\Phi =
\\
\sum_{\bar n\in \N^E} \int d\bar\Phi
H\left(\bar \Phi,\bar n\right)
\bar E_{x_0, \bar \Phi, \bar n}\Big(\indic_{\{\bar X_{\bar T}=x_0,~\forall e\in E~\bar n_e(\bar T)\ge 0\}} 
F\left(\bar \Phi(\bar T) , \bar n(\bar T)\right) G\left((\bar Z_{t})_{t\le\bar T}\right) \prod_{x\in V\setminus\{x_0\}} {\bar \Phi_x\over \bar\Phi_x(\bar T) }
\Big),
\end{multline}
with notations of Lemma \ref{change-var}.

Let $\bar\fff_t=\sigma((\bar X_s)_{s\le t})$ be the filtration generated by $\bar X$. We define the $\bar \fff$-adapted process
$\bar M_t$, defined up to time $\bar T$ by
\begin{multline}
\label{Mart}
\bar M_t
= {F(\bar \Phi(t), \bar n(t))\over \prod_{x\in V\setminus\{\bar X_t\}} \bar\Phi_x(t) }
\indic_{\{\bar X_t\in \ccc(x_0,\bar n)\}}
\indic_{\{\bar n_e(t)\ge 0,~\forall e\in E\}}=
e^{-\demi \sum_{x\in V} W_x\bar \Phi_x(t)^2-\sum_{e\in E} J_e(\bar\Phi(t)) }
\times
\\\times
\Big(\prod_{e\in E} {(2J_e(\bar \Phi(t)))^{\bar n_e(t)}\over \bar n_e(t) !}\Big)
{2^{\#c.\ccc(\bar n(t))-1}
\over \prod_{x\in V\setminus\{\bar X_t\}} \bar\Phi_x(t) }
\indic_{\{\bar X_t\in \ccc(x_0,\bar n(t))\}}
\indic_{\{\bar n_e(t)\ge 0 , ~\forall e\in E\}},
\end{multline}
where $\ccc(x_0,\bar n(t))$ denotes the cluster of the origin $x_0$ induced by the configuration $\ccc(\bar n(t))$.
Note that at time $t=\bar T$, we also have
\begin{eqnarray}\label{M-T}
\bar M_{\bar T}= {F(\bar \Phi(\bar T), \bar n(\bar T))\over \prod_{x\in V\setminus\{x_0\}} \bar\Phi_x(\bar T) }
\indic_{\{\bar X_{\bar T}=x_0\}}
\indic_{\{\bar n_e(t)\ge 0,~\forall e\in E\}} 
\end{eqnarray}
since $\bar M_{\bar T}$ vanishes on the event where $\{\bar X_{\bar T}=x\}$, with $x\neq x_0$. Indeed, if $\bar X_{\bar T}=x\neq x_0$, then
$\bar\Phi_x(\bar T)=0$ and $J_e(\bar\Phi(\bar T))=0$ for $e\in E$ such that $e$ adjacent to $x$. 
It means that $\bar M_{\bar T}$ is equal to
0 if $\bar n_{e}(\bar T)>0$ for some edge $e$ neighboring $x$. Thus, $\bar M_{\bar T}$ is null unless $\{x\}$ is a cluster in 
$\ccc(\bar n(\bar T))$.
Hence, $\bar M_{\bar T}=0$ if $x\neq x_0$ since $\bar M_{\bar T}$ contains the indicator of the event that $\bar X_{\bar T}$ and $x_0$ are in the same cluster. 

Hence, using identities \eqref{eq-3.3} and \eqref{M-T}
we deduce that \eqref{test-functions} is equal to 
\begin{eqnarray}
\label{equ-M}
(\ref{test-functions})&=&
\sum_{\bar n\in \N^E} \int d\bar\Phi
H\left(\bar \Phi,\bar n\right) F\left(\bar \Phi,\bar n\right)
\bar E_{x_0, \bar \Phi, \bar n}\left(
{\bar M_{\bar T}\over \bar M_0} 
 G\left((\bar Z_{t})_{t\le\bar T}\right)
\right).
\end{eqnarray}

\noindent {\bf Step 4 :}
We denote by $\check Z_t=(\check X_t, \check \Phi_t, \check n(t))$ the process defined in Section \ref{sec_Poisson}, which is well defined up to stopping time $\check T$, and $\check Z^T_t=\check Z_{t\wedge \check T}$. We denote by $\check E_{x_0, \check \Phi, \check n}$
the law of the process $\check Z$ conditional on the initial value $\check n(0)$, i.e. conditional on $(N_e(2J(\check\Phi)))=(\check n_e)$.
The last step of the proof goes through the following lemma.
\begin{lemma}\label{RN}
i) Under $\check E_{x_0,\check\Phi,\check n}$, $\check X$ ends at $\check X_{\check T}=x_0$ a.s. and 
$\check n_e(\check T)\ge 0$ for all $e\in E$.

ii) Let $\bar P^{\le t}_{x_0,\bar\Phi,\bar n}$ and $\check P^{\le t}_{x_0,\check\Phi,\check n}$ be the law
of the process $(\bar Z^T_s)_{s\le t}$ and $(\check Z^T_s)_{s\le t}$
respectively, then
$$
{d\check P^{\le t}_{x_0,\bar \Phi,\bar n}\over d\bar P^{\le t}_{x_0,\bar \Phi,\check n}}={\bar M_{t\wedge \bar T}\over \bar M_0}.
$$
\end{lemma}
Using this lemma we obtain that in the right-hand side of \eqref{equ-M}
$$
\bar E_{x_0, \bar \Phi , \bar n}\left(
{\bar M_{\bar T}\over \bar M_0} 
 G\left((\bar Z_{t})_{t\le\bar T}\right)\right)=
 \check E_{x_0, \bar \Phi , \bar n}
 \left(  
 G\left((\check Z_{t})_{t\le\check T}\right)\right).
$$
Hence, we deduce, using formula \eqref{h} and proceeding as in Lemma \ref{distrib-phi-n}, that \eqref{test-functions} is equal to
\begin{multline*}
\label{final}
\int_{\R^{V\setminus\{x_0\} }} d\bar\varphi
e^{-\demi \eee(\bar\varphi,\bar\varphi)}  h(\bar \varphi) 
\sum_{\bar n\ll \bar \varphi} \left(\prod_{e\in E, \; \bar\varphi_{e_-}\bar\varphi_{e_+}\ge 0} 
{e^{-2J_e(\vert \bar \varphi\vert)}(2J_e(\vert \bar \varphi\vert ))^{\bar n_e}\over \bar n_e !}\right) 
\\\bar E_{x_0, \vert \bar \varphi\vert , \bar n}\left({\bar M_{\bar T}\over \bar M_0} 
 G\left((\bar Z_{t})_{t\le\bar T}\right)\right),
\end{multline*}
where the last integral is on the set 
$\{(\bar\varphi_x)_{x\in V}\in \R^V\vert \varphi_{x_0}=u\}$,
$d\bar\varphi={\prod_{x\in V\setminus\{x_0\}} d\bar\varphi_x\over \sqrt{2\pi}^{\vert V\vert -1}}$, and where 
$\bar n\ll \bar\varphi$ means that
$\bar n_e=0$ if $\bar\varphi_{e_-}\bar\varphi_{e_+}\le 0$.
Finally, we conclude that
\begin{eqnarray*}
\E\left[ g\left(\left(X_{\tau_u^{x_0}-t}, n_e(\tau_u^{x_0}-t)\right)_{0\le t\le \tau_u^{x_0}}\right)h(\varphi^{(u)})\right]=
\E\left[ g\left(\left(\check X_{t}, \check n_e(t)\right)_{0\le t\le \check T}\right)h(\check \varphi)\right],
\end{eqnarray*}
where in the right-hand side 
$\check \varphi\sim P_{\varphi}^{\{x_0\}, \sqrt{2u}} $ 
is a GFF and $(\check X_t, \check n(t))$ 
is the process defined in Section \ref{sec_Poisson} from the
GFF $\check \varphi$.
This exactly means that 
$\varphi^{(u)} \sim P_{\varphi}^{\{x_0\}, \sqrt{2u}}$ 
and that
$$
\text{Law}\left(\left(X_{\tau_u^{x_0}-t}, n_e(\tau_u^{x_0}-t)\right)_{0\le t\le \tau_u^{x_0}}\; 
\Big| \; \varphi^{(u)}=\check\varphi\right)= 
\text{Law}\left(\left(\check X_t, \check n(t)\right)_{t\le \check T}\right).
$$
This concludes the proof of Theorem \ref{thm-Poisson2}.
\end{proof}
\begin{proof}[Proof of Lemma \ref{RN}]
The generator of the process $\bar Z_t$ defined in (\ref{tildeZ}) is given, for any bounded and $\mathcal{C}^{1}$ for the second component test function $f$, by 
\begin{equation}
\label{tildeL2}
\begin{split}
&(\bar{\mathfrak{L}} f)(x,\bar\Phi,\bar n)=
-{1\over \bar \Phi_x} ({\partial\over \partial \bar\Phi_x}f)
(x,\bar\Phi, \bar n) +\\
&
\sum_{\substack{y\in V\\y\sim x}} \left(W_{x,y} \left(f(y,\bar\Phi,\bar n-\delta_{\{x,y\}})-f(x,\bar\Phi,n)\right)+
W_{x,y} {\bar \Phi_{y}\over \bar \Phi_x} \left(f(x,\bar\Phi, n-\delta_{\{x,y\}})-f(x,\bar\Phi,n)\right)\right).
\end{split}
\end{equation}
where $ n-\delta_{\{x,y\}}$ is the value obtained by removing 1 from $n$ at edge $\{x,y\}$.
Indeed, since $\bar \Phi_x(t)=
\sqrt{\bar\Phi_{x}(0)^{2} -2\bar \ell_x(t)}$, we have 
\begin{eqnarray}
\label{deriv-Phi}
{d\over d t} \bar \Phi_x(t)=
-\indic_{\{\bar X_t=x\}}{1\over \bar \Phi_x(t)},
\end{eqnarray}
which explains the first term in the expression. The second term is obvious from the definition of $\bar Z_t$, and corresponding to the term induced by jumps
of the Markov process $\bar X_t$. The last term corresponds to the decrease of $\bar n$ due to the increase in the process
$N_e(J_{e}(\bar \Phi))-N_e(J_{e}(\bar \Phi(t)))$. Indeed, on the interval $[t,t+dt]$, the probability that
$N_{e}(J_{e}(\bar \Phi(t)))-N_{e}(J_{e}(\bar \Phi(t+dt)))$
is equal to 1 is of order
$$-{d\over d t} N_e(J_{e}(\bar \Phi(t)))dt=
\indic_{\{\bar X_t~\text{endpoint of}~e\}}
 {W_e \bar \Phi_{e_{-}}(t)\bar\Phi_{e_{+}}(t)\over \Phi_{\bar X_t}(t)^2}dt,
 $$
using identity \eqref{deriv-Phi}.

Let $\check{\mathfrak{L}}$ be the generator of the Markov jump process $\check Z_t=(\check X_t, (\check \Phi_x(t)), (\check n_e(t)))$.
We have that the generator is equal, for any smooth test function $f$, to
\begin{eqnarray}
\label{EqCheckL}
&&(\check{\mathfrak{L}} f)(x,\Phi, n)=
-{1\over  \Phi_x} ({\partial\over \partial \Phi_x}f)(x,\Phi,  n) +\\
\nonumber
&&\demi \sum_{\substack{y\in V\\y\sim x}}{ n_{x,y} \over  \Phi_x^2}
{\indic_{\aaa_1(x,y)}} \left(f(y,\bar\Phi,n-\delta_{\{x,y\}})+f(x,\bar\Phi,n-\delta_{\{x,y\}})- 2f(x,\bar\Phi,n)\right)
\\
\nonumber
&&+ \sum_{\substack{y\in V\\y\sim x}}{ n_{x,y} \over \Phi_x^2}  \indic_{\aaa_2(x,y)} \left( f(y,\bar\Phi,n-\delta_{\{x,y\}})- f(x,\bar\Phi,n)) \right)
\\
\nonumber
&&+\sum_{\substack{y\in V\\y\sim x}}{n_{x,y} \over \Phi_x^2} 
 \indic_{\aaa_3(x,y)} \left(f(x,\bar\Phi,n-\delta_{\{x,y\}}) - f(x,\bar\Phi,n) \right),
\end{eqnarray}
where 
$\aaa_{i}(x,y)$ correspond to the following disjoint events:
\begin{itemize}
\item
$\aaa_1(x,y)$ if the numbers of connected clusters induced by $n-\delta_{\{x,y\}}$ is the same as that of $\check n$;
\item
$\aaa_2(x,y)$ if a new cluster is created in $ n-\delta_{\{x,y\}}$ compared with $\check n$ and if $y$ is in the connected component
of $x_0$ in the cluster induced by $ n-\delta_{\{x,y\}}$;
\item
$\aaa_3(x,y)$ if a new cluster is created in $ n-\delta_{\{x,y\}}$ compared with $n$ and if $x$ is in the connected component
of $x_0$ in the cluster induced by $ n-\delta_{\{x,y\}}$.
\end{itemize}
Indeed, conditional on the value of $\check n_e(t)=N_e(2J_e(\check\Phi(t)))$ at time $t$, the point process $N_e$ on the interval 
$[0,  2J_e(\check\Phi(t))]$ has the law of
$n_e(t)$ independent points with uniform distribution on 
$[0,  2J_e(\check\Phi(t))]$. Hence, the probability that a point lies in the interval 
$[2J_e(\check\Phi(t+dt)),  2J_e(\check\Phi(t))]$ is of order 
$$
-\check n_e(t) {1\over J_e(\check\Phi(t))}{d\over d t} J_e(\check\Phi(t))  dt= \indic_{\{X_t~\text{endpoint of}~e\}}\;\check n_e(t){1\over \check\Phi_{X_t}(t)^2}dt.
$$
We define the function 
\begin{multline}
\nonumber\Theta(x,(\Phi_x),(n_e))=\\
e^{-\demi \sum_{x\in V}W_x \Phi_x^2-\sum_{e\in E} J_e(\Phi) }
\left(\prod_{e\in E} {(2J_e(\Phi))^{n_e}\over n_e !}\right)
{2^{\#c.\ccc(n)-1}
\over \prod_{y\in V\setminus\{x\}} \Phi_{y} }
\indic_{\{x\in \ccc(x_0,n),~\text{and}~\forall e\in E,~
n_e\ge 0\}},
\end{multline}
so that
$$
\bar M_{t\wedge \bar T}= \Theta(\bar Z_{t\wedge\bar T}).
$$
To prove the lemma it is sufficient to prove (\cite{ChungWalsh05MP}, Chapter 11) that for any bounded smooth test function $f$,
\begin{eqnarray}\label{LcheckL}
{1\over \Theta}\bar{\mathfrak{L}}\left(\Theta f\right)= \check{\mathfrak{L}}\left(f\right).
 \end{eqnarray}
Let us first consider the first term in (\ref{tildeL2}).
Direct computation gives 
$$
\left({1\over \Theta}{1\over \Phi_x}\left({\partial\over\partial \Phi_x} \Theta\right)\right) (x,\Phi,n)= -W_x
+\sum_{\substack{y\in V\\y\sim x}} \left(- W_{x,y}{\Phi_y\over\Phi_x}+n_{x,y}{1\over \Phi_x^2}\right).
$$
For the second part, remark that the indicators $\indic_{\{x\in \ccc(x_0,n)\}}$ and $\indic_{\{n_e\ge 0,~ \forall e\in E\}}$ imply that 
$
\Theta(y,\Phi, n-\delta_{\{x,y\}})
$
vanishes if $n_{x,y}=0$ or if $y\not\in \ccc(x_0,n-\delta_{\{x,y\}})$.
By inspection of the expression of $\Theta$, we obtain for $x\sim y$,
\begin{eqnarray*}
\Theta (y,\Phi, n-\delta_{\{x,y\}})&=& \left(\indic_{\{n_{x,y}>0\}} (\indic_{\aaa_1}+2\indic_{\aaa_2}) {n_{x,y}\over 2J_{x,y}(\Phi)}{\Phi_y\over \Phi_x}\right)\Theta(x,\Phi, n) 
\\
&=&\left((\indic_{\aaa_1}+2\indic_{\aaa_2}) {n_{x,y}\over 2W_{x,y}}{1\over \Phi_x^2}\right)\Theta(x,\Phi, n).
\end{eqnarray*}
Similarly,  for $x\sim y$,
\begin{eqnarray*}
\Theta(x,\Phi, n-\delta_{\{x,y\}})&=& \left(\indic_{\{n_{x,y}>0\}}(\indic_{\aaa_1}+2\indic_{\aaa_3}){n_{x,y}\over 2J_{x,y}}\right)\Theta(x,\Phi, n)\\
&=&
\left((\indic_{\aaa_1}+2\indic_{\aaa_3}) {n_{x,y}\over 2W_{x,y}\Phi_x\Phi_y}\right)\Theta(x,\Phi, n).
\end{eqnarray*}
Combining these three identities with the expression \eqref{tildeL2} we deduce
\begin{eqnarray*}
&&{1\over \Theta}\bar{\mathfrak{L}}\left(\Theta f\right)(x,\Phi,n)=\\
&&
-{1\over \Phi_x} {\partial\over\partial \Phi_x}f(x,\Phi,n)
-\sum_{\substack{y\in V\\y\sim x}} \left(n_{x,y}{1\over \Phi_x^2}\right)f(x,\Phi,n)
\\
&&  +\sum_{\substack{y\in V\\y\sim x}} (\indic_{\aaa_1}+2\indic_{\aaa_2}) n_{x,y}{1\over 2\Phi_x^2} f(y,  n-\delta_{\{x,y\}},\Phi)+
\sum_{\substack{y\in V\\y\sim x}}(\indic_{\aaa_1}+2\indic_{\aaa_3}){1\over 2 \Phi_x^2} f(x, n-\delta_{\{x,y\}},\Phi).
\end{eqnarray*}
It exactly coincides with the expression 
\eqref{EqCheckL}
for $\check{\mathfrak{L}}$ since 
$1=\indic_{\aaa_1}+\indic_{\aaa_2}+\indic_{\aaa_3}$.
\end{proof}

\subsection{General case}
\label{sec:pgen}

\begin{proposition}
\label{PropKillingCase}
The conclusion of
Theorem \ref{thm-Poisson} still holds
if the graph $\mathcal{G}=(V,E)$ is finite and the killing measure is non-zero ($\kappa\not\equiv 0$).
\end{proposition}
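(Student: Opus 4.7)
The approach is the standard cemetery-vertex reduction. Form the augmented graph $\mathcal{G}'=(V',E')$ with $V'=V\cup\{\Delta\}$, where $\Delta$ is an auxiliary vertex, and $E'=E\cup\{\{x,\Delta\}:\kappa_x>0\}$, assigning conductance $\kappa_x$ to each new edge $\{x,\Delta\}$ and zero killing measure to $\mathcal{G}'$. Two standard identifications then hold: the Markov jump process on $\mathcal{G}$ with killing $\kappa$ agrees in law with the Markov process $X'$ on $\mathcal{G}'$ killed on first hitting $\Delta$, and $P_{\varphi}^{\{x_0\},a}$ on $\mathcal{G}$ is the law of $\Psi|_V$ where $\Psi$ is a GFF on $\mathcal{G}'$ conditioned to take value $a$ at $x_0$ and $0$ at $\Delta$.

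The plan is to apply Theorem \ref{thm-Poisson2} (Section \ref{sec:pfinite}) to the finite graph $\mathcal{G}'$ without killing, with distinguished vertex $x_0$, yielding on $V'$ a joint coupling of $(\Psi^{(0)},X',\Psi^{(u)},\check X')$, and then to condition on the event $\{\Psi^{(u)}_\Delta=0\}$. Using the coupling identity $\Psi^{(u)}_x=\sigma_x\sqrt{(\Psi^{(0)}_x)^2+2\ell_x(\tau_u^{x_0})}$ together with the independence of $\Psi^{(0)}$ and $X'$ provided by Theorem \ref{Lupu}, this event factors as the intersection of the independent events $\{\Psi^{(0)}_\Delta=0\}$ (a Gaussian null event) and $\{\tau_u^{x_0}<\tau^\Delta\}$ (of positive probability, since the walk has a positive chance of staying at $x_0$ up to local time $u$ without visiting $\Delta$). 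Conditionally on this product event, $\Psi^{(0)}|_V$ becomes the GFF $\varphi^{(0)}\sim P_{\varphi}^{\{x_0\},0}$ on $\mathcal{G}$ with killing $\kappa$, $X'$ conditioned on $\tau_u^{x_0}<\tau^\Delta$ becomes the walk $X$ on $\mathcal{G}$ with killing $\kappa$ conditioned on $\tau_u^{x_0}<\zeta$, and $\Psi^{(u)}|_V$ becomes $\varphi^{(u)}\sim P_{\varphi}^{\{x_0\},\sqrt{2u}}$ on $\mathcal{G}$ with killing.

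It then remains to verify that the reverse process $(\check X'_t,(\check n'_e(t))_{e\in E'})$ on $V'$, with initial data satisfying $\check\Psi_\Delta=0$, reduces to the reverse process described in Section \ref{sec_Poisson} for $\mathcal{G}$ with killing $\kappa$. Since $\check\Phi'_\Delta\equiv 0$, for every cemetery edge $\{x,\Delta\}$ one has $J_{\{x,\Delta\}}(\check\Phi'(t))=\kappa_x\check\Phi'_x(t)\check\Phi'_\Delta(t)=0$ for all $t$, so the stacks $\check n'_{\{x,\Delta\}}(t)$ vanish identically, these edges never enter $\check\ccc'(t)$, and $\check X'$ cannot cross them; starting from $x_0\in V$ it stays in $V$. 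With the convention of Theorem \ref{thm-Poisson} that $\check T$ is the first time some $\check\Phi_x(t)$ vanishes for $x\in V$ (excluding $\Delta$), the dynamics on $V$ coincide with those of Section \ref{sec_Poisson} for $\mathcal{G}$ with killing, because the jump rates and the stack evolution involve only the edges of $E$ and the conductances $W_e$, while the killing $\kappa$ enters solely through the law of the initial data $\check\varphi=\varphi^{(u)}$. The principal technical obstacle is justifying the conditioning on the null event $\{\Psi^{(u)}_\Delta=0\}$; this is handled by combining a regular conditional distribution for the Gaussian factor $\Psi^{(0)}$ with standard conditioning on the positive-probability independent walk event $\{\tau_u^{x_0}<\tau^\Delta\}$, after which the identity of Theorem \ref{thm-Poisson2} transfers almost surely to the conditional law and gives precisely the conclusion of Theorem \ref{thm-Poisson} for $\mathcal{G}$ with killing $\kappa$.
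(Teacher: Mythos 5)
Your reduction is genuinely different from the paper's. The paper never adds a cemetery vertex: it performs a Doob $h$-transform with $h(x)=\mathbb{P}_{x}(X~\text{hits}~x_{0}~\text{before}~\zeta)$, replaces the conductances by $W^{h}_{x,y}=W_{x,y}h(x)h(y)$, and thereby lands on a finite graph with \emph{zero} killing measure on the \emph{same} vertex set, to which the result of Section \ref{sec:pfinite} applies directly. The killed walk conditioned on $\{\tau_{u}^{x_{0}}<\zeta\}$ is exactly the $W^{h}$-walk after the time change $dt=h(X_{s})^{-2}ds$ (so the conditioning is absorbed into the dynamics rather than imposed as an event), the identity $\mathcal{E}(hf,hf)=\text{Cste}(f(x_{0}))+\mathcal{E}^{h}(f,f)$ gives $\varphi^{(0)}_{h}\stackrel{\text{law}}{=}h^{-1}\varphi^{(0)}$ and $\varphi^{(u)}_{h}\stackrel{\text{law}}{=}h^{-1}\varphi^{(u)}$, and the interaction parameters are invariant, $W^{h}_{e}\,(h^{-1}\Phi)_{e_{-}}(h^{-1}\Phi)_{e_{+}}=W_{e}\Phi_{e_{-}}\Phi_{e_{+}}$, so the stack dynamics transport verbatim through the time change (cf.\ (\ref{EqTimeChange})). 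Every identification in the paper's route is an exact equivalence of laws; no conditioning on a null event ever appears.

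That is precisely where your argument has a genuine gap. You cannot obtain the killed-graph statement by conditioning the conclusion of Theorem \ref{thm-Poisson2} on $\{\Psi^{(u)}_{\Delta}=0\}$: this event is null under $P_{\varphi}^{\{x_{0}\},\sqrt{2u}}$ on $\mathcal{G}'$, and the theorem's disintegration ``conditionally on $\varphi^{(u)}=\check\varphi$'' is only an a.e.\ statement, so the step ``the identity of Theorem \ref{thm-Poisson2} transfers almost surely to the conditional law'' is not a valid inference — an a.e.\ version of a conditional kernel carries no information on a singular slice. The degeneracy is visible in the construction itself: on the slice $\check\varphi_{\Delta}=0$ the stopping time of Section \ref{sec_Poisson}, applied literally to $\mathcal{G}'$, is $\check T=0$ since $\check\Phi_{\Delta}(0)=0$; the fact that you must amend the definition of $\check T$ by hand (``excluding $\Delta$'') shows you are not invoking the proved theorem on $\mathcal{G}'$ but an unproved pinned-at-$\Delta$ variant of it. To close the gap you would need either (a) to re-run Section \ref{sec:pfinite} with $\Delta$ pinned at $0$ — Lemma \ref{distrib-phi-n}, the change of variables of Lemma \ref{change-var} and the martingale of Lemma \ref{RN} do extend, but the forward walk is then additionally conditioned on $\{\tau_{u}^{x_{0}}<\tau^{\Delta}\}$ and this conditioning must be carried through the time reversal — or (b) to condition on $\{\vert\Psi^{(u)}_{\Delta}\vert<\epsilon\}$ and prove continuity of the law of $(\check X,\check n)$ in the initial field in order to pass to the limit $\epsilon\to 0$. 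Either route is plausible, but it is the actual mathematical content of Proposition \ref{PropKillingCase}, not a formality; the paper's $h$-transform circumvents it entirely. The parts of your sketch that are correct and worth keeping are the identification of the killed objects on $\mathcal{G}'$ (walk killed at $\Delta$, GFF pinned at $\Delta$), the factorization $\{\Psi^{(u)}_{\Delta}=0\}=\{\Psi^{(0)}_{\Delta}=0\}\cap\{\ell_{\Delta}(\tau_{u}^{x_{0}})=0\}$, and the observation that on the slice the cemetery stacks satisfy $\check n_{\{x,\Delta\}}\equiv 0$ so the reverse dynamics never see $\Delta$.
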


\begin{proof}
Let $h$ be the function on $V$ defined as
\begin{displaymath}
h(x)=\mathbb{P}_{x}(X~\text{hits}~x_{0}~\text{before}~\zeta).
\end{displaymath}
By definition $h(x_{0})=1$. Moreover, for all 
$x\in V\setminus\lbrace x_{0}\rbrace$,
\begin{displaymath}
-\kappa_{x} h(x)+\sum_{\substack{y\in V\\y\sim x}}W_{x,y}(h(y)-h(x))=0.
\end{displaymath}
Define the conductances
$W^{h}_{e}:=W_{e}h(e_{-})h(e_{+})$, and the corresponding jump process $X^{h}$, and the GFF $\varphi_{h}^{(0)}$ and $\varphi_{h}^{(u)}$ with conditions
$0$ respectively $\sqrt{2u}$ at $x_{0}$. The Theorem \ref{thm-Poisson}
holds for the graph $\mathcal{G}$ with conductances 
$(W^{h}_{e})_{e\in E}$ and with zero killing measure.
Denote
\begin{displaymath}
\ell_{x}^{h}(t)=\int_{0}^{t}\indic_{\{X^{h}_{s}=x\}}ds,\qquad
\tau_{u}^{x,h}=\inf\lbrace t\geq 0\vert \ell_{x_{0}}^{h}(t)\geq \sqrt{2u}\rbrace.
\end{displaymath} 
The process
$(X^{h}_{t})_{t\leq \tau_{u}^{x_{0},h}}$ 
has the same law as the process
$(X_{\theta^{h}(t)})_{t\leq (\theta^{h})^{-1}(\tau_{u}^{x_{0}})}$, conditional on
$\tau_{u}^{x_{0}}<\zeta$, after the change of time
\begin{displaymath}
d\theta^{h}(t)= h(X_{\theta^{h}(t)})^{2}dt.
\end{displaymath}
This means in particular that for the occupation times,
\begin{equation}
\label{EqTimeChange}
\ell_{x}^{h}(t)=h(X_{\theta^{h}(t)})^{-2}\ell_{x}(\theta^{h}(t)).
\end{equation}
Moreover, we have the equalities in law 
\begin{displaymath}
\varphi_{h}^{(0)}\stackrel{\text{law}}{=}h^{-1}\varphi^{(0)},\qquad
\varphi_{h}^{(u)}\stackrel{\text{law}}{=}h^{-1}\varphi^{(u)}.
\end{displaymath}
Indeed, at the level of energy functions, we have:
\begin{equation*}
\begin{split}
&\mathcal{E}(hf,hf)=
\sum_{x\in V}\kappa_{x} h(x)^{2}f(x)^{2}+
\sum_{e\in E}W_{e}(h(e_{+})f(e_{+})-h(e_{-})f(e_{-}))^{2}\\&=
\sum_{x\in V}[\kappa_{x}h(x)^{2}f(x)^{2}+
\sum_{\substack{y\in V\\y\sim x}}W_{x,y}h(y)f(y)(h(y)f(y)-h(x)f(x))]\\
&=
\sum_{x\in V}[\kappa_{x}h(x)^{2}f(x)^{2}-
\sum_{y\sim x}W_{x,y}(h(y)-h(x))h(x)f(x)^{2}]
-\sum_{\substack{x\in V\\y\sim x}}W_{x,y}h(x)h(y)(f(y)-f(x))f(x)
\\&=[\kappa_{x_{0}}-
\sum_{\substack{y\in V\\y\sim x_{0}}}W_{x_{0},y}(h(y)-1)]f(x_{0})^{2}
+\sum_{e\in E}W_{e}^{h}(h(e_{+})f(e_{+})-h(e_{-})f(e_{-}))^{2}
\\&= \text{Cst}(f(x_{0}))+\mathcal{E}^{h}(f,f),
\end{split}
\end{equation*}
where $\text{Cst}(f(x_{0}))$ means that this term does not depend of $f$
once the value of the function at $x_{0}$ fixed.

Let $\check{X}^{h}_{t}$ be the inverse process for the conductances 
$(W_{e}^{h})_{e\in E}$ and the initial condition for the field
$\varphi_{h}^{(u)}$, given by Theorem \ref{thm-Poisson}. 
By applying the inverse of the time change
\eqref{EqTimeChange} to the process $\check{X}^{h}_{t}$, we obtain an inverse process for the conductances $W_{e}$ and the field 
$\varphi^{(u)}$.
\end{proof}

\begin{proposition}
\label{PropInfiniteCase}
Assume that the graph $\mathcal{G}=(V,E)$ is infinite. The killing measure $\kappa$ may be non-zero. Then the conclusion of
Theorem \ref{thm-Poisson} holds.
\end{proposition}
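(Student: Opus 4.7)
The plan is to reduce the infinite-graph setting to Proposition \ref{PropKillingCase} via an exhaustion combined with a localization argument. Let $(V_n)_{n\ge1}$ be an increasing sequence of finite subsets of $V$ containing $x_0$ with $\bigcup_n V_n=V$. On $V_n$ I keep only the edges with both endpoints in $V_n$ and add to the killing measure of each $x\in V_n$ the rate $\sum_{y\sim x,\,y\notin V_n}W_{x,y}$, so that the Markov jump process on the resulting graph $\mathcal{G}_n$ equals $X$ killed upon exiting $V_n$. Proposition \ref{PropKillingCase} then yields, for each $n$, the inversion statement of Theorem \ref{thm-Poisson} on $\mathcal{G}_n$.

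The second ingredient is the observation that $(\check X_t,\check\ccc(t))$ depends on only a random finite portion of the data. By the proposition preceding Theorem \ref{thm-Poisson}, $\check X_t$ lies in the cluster of $x_0$ in $\check\ccc(t)$, and since $\check\ccc(t)\subseteq\check\ccc(0)=\mathcal{O}(\check\varphi)$ (edges are only ever closed by the dynamics), this cluster is contained in the fixed connected component $\mathcal{K}$ of $x_0$ in $\mathcal{O}(\check\varphi)$. On the event $\{\mathcal{K}\subset V_n\}$, the inverse processes built on $V$ and on $V_n$ therefore agree pathwise, being driven by the same Poisson processes $(N_e)_{e\in\mathcal{K}}$ and the same restricted initial data. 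By the remark following Theorem \ref{Lupu}, the forward path $(X_t)_{t\le\tau_u^{x_0}}$ is also contained in $\mathcal{K}$, and hence in $V_n$, on this event.

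The key step, which I expect to be the main obstacle, is to show that $\mathcal{K}$ is $P_\varphi^{\{x_0\},\sqrt{2u}}$-almost surely finite. By Lemma \ref{lem:fki} and Theorem \ref{thm:Lupu}, $\mathcal{K}$ is contained in the sign cluster of $x_0$ in the metric-graph field $\tilde\varphi^{(u)}$, which by Lupu's isomorphism equals the cluster of $x_0$ in $\tilde{\mathcal{L}}_{1/2}$ enriched by the independent excursions from $x_0$ up to local time $u$. Transience of $X$ should guarantee that such clusters on $\tilde{\mathcal{G}}$ are a.s.\ finite, but a clean argument (via the Green-function bound on loop intensities or by direct comparison to critical Bernoulli percolation on $\tilde{\mathcal{G}}$ as in Lupu--Werner) is the delicate point. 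Granted this finiteness, the identity $\sum_{x}\check\ell_x(t)=t$ yields $\check T\le\tfrac12\sum_{x\in\mathcal{K}}\check\Phi_x^2<+\infty$ almost surely and, by repeating the finite-$V$ argument inside $\mathcal{K}$, $\check X_{\check T}=x_0$ almost surely.

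To conclude, I would fix a bounded test function $F$ of the inverse data and write $\E[F]$ as the monotone limit of $\E[F;\mathcal{K}\subset V_n]$. On the event $\{\mathcal{K}\subset V_n\}$ the joint laws of the processes constructed on $V$ and on $V_n$ coincide, so Proposition \ref{PropKillingCase} applied to $\mathcal{G}_n$ gives the desired identity of laws, and since $\Pb(\mathcal{K}\subset V_n)\to 1$ a standard dominated convergence argument transfers it from the finite approximations to the infinite graph, completing the proof of Theorem \ref{thm-Poisson} in full generality.
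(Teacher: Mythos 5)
Your overall exhaustion strategy is sound in spirit, but the step you yourself flag as delicate is not just delicate — it is false in the generality of the paper, and your argument collapses without it. You need the cluster $\mathcal{K}$ of $x_0$ in $\check\ccc(0)$ (in law, the FK-Ising cluster of $x_0$ attached to $\varphi^{(u)}$, i.e.\ the sign cluster of $x_0$ of the metric-graph field $\tilde\varphi^{(u)}$) to be a.s.\ finite, and transience does not guarantee this. Lupu's finiteness of sign clusters of the cable-graph GFF is a non-trivial theorem specific to $\mathbb{Z}^d$-type lattices; on general transient graphs within the paper's hypotheses it can fail — for instance on transient trees (including regular trees) the critical level $\tilde h_*$ for the metric-graph GFF is strictly positive (Ab\"acherli--Sznitman), so the sign cluster of $x_0$ is infinite with positive probability. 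On such graphs the event $\{\mathcal{K}\subset V_n\}$ does not exhaust the probability space, and your localization gives nothing on the complementary event. Note also that you cannot weaken the requirement from ``$\mathcal{K}$ finite'' to ``the visited range is finite'' (which \emph{is} a.s.\ true): the jump decisions of $\check X$ depend on whether closing an edge disconnects the cluster of $x_0$ and on which endpoint remains connected to $x_0$, and this is a \emph{global} property of the possibly infinite cluster — an edge closure can disconnect $y$ from $x_0$ inside $V_n$ while a connection survives through $V\setminus V_n$, so the processes on $\mathcal{G}$ and $\mathcal{G}_n$ genuinely disagree pathwise unless all of $\mathcal{K}$ lies in $V_n$. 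Finally, observe that the theorem's conclusions ($\check T<+\infty$ and $\check X_{\check T}=x_0$) do not require $\mathcal{K}$ to be finite, which is a hint that finiteness is the wrong pivot for the proof.

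The paper's proof avoids this obstruction entirely: it approximates $\mathcal{G}$ by finite graphs $\mathcal{G}_i^{\ast}$ obtained by collapsing the exterior of $V_i$ onto an absorbing vertex $x_{\ast}$ (equivalent to your boundary killing rates), applies the finite case there, and then passes to the limit \emph{in law}: the Dirichlet fields $\varphi_i^{(0)},\varphi_i^{(u)}$ converge weakly to $\varphi^{(0)},\varphi^{(u)}$, the inverse processes converge weakly to the inverse process on $\mathcal{G}$, and the identity of laws with the reversed forward process survives the limit. The a.s.\ statements $\check T<+\infty$, $\check X_{\check T}=x_0$ are then extracted not from any cluster finiteness but from the limiting identity itself, via
$\Pb(\check T\le t,\ \check X_{[0,\check T]}\hbox{ stays in }V_j)
=\Pb(\tau_u^{x_0}\le t,\ X_{[0,\tau_u^{x_0}]}\hbox{ stays in }V_j\,\vert\,\tau_u^{x_0}<\zeta)$,
whose right-hand side tends to $1$ as $t,j\to\infty$ because the conditioned forward path is a.s.\ a finite-time, finite-range trajectory. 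If you want to repair your write-up, you should replace the localization on $\{\mathcal{K}\subset V_n\}$ by this weak-convergence scheme (or restrict your claim to graphs where sign clusters of the cable GFF are known to be a.s.\ finite, which is strictly weaker than the proposition you were asked to prove).
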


\begin{proof}
Consider an increasing sequence of connected sub-graphs
$\mathcal{G}_{i}=(V_{i},E_{i})$ of $\mathcal{G}$ which converges to the whole graph. We assume that $V_{0}$ contains $x_{0}$.
Let $\mathcal{G}_{i}^{\ast}=(V_{i}^{\ast},E_{i}^{\ast})$ be the graph obtained by adding to $\mathcal{G}_{i}$ an abstract vertex
$x_{\ast}$, and for every vertex $x\in V_{i}$ connected by an edge in $E_{i}$ to a $y\in V\setminus V_{i}$, adding an edge 
$\lbrace x,x_{\ast}\rbrace$
with with a conductance
$$W_{x,x_{\ast}}=
\sum_{\substack{y\in V\setminus V_{i}\\y\sim x}}
W_{x,y}.$$
$(X_{i,t})_{t\geq 0}$ will denote the Markov jump process on 
$\mathcal{G}_{i}^{\ast}$, started from $x_{0}$. 
Let $\zeta_{i}$ be the first hitting time of $x_{\ast}$ or the first
killing time by the measure $\kappa\indic_{V_{i}}$. Let
$\varphi^{(0)}_{i}$,
$\varphi^{(u)}_{i}$ will denote the GFFs on $\mathcal{G}_{i}^{\ast}$ with condition $0$ respectively $\sqrt{2u}$ at $x_{0}$, with condition $0$ at 
$x_{\ast}$, and taking in account the possible killing measure 
$\kappa\indic_{V_{i}}$. 
The limits in law of $\varphi^{(0)}_{i}$
respectively $\varphi^{(u)}_{i}$ are 
$\varphi^{(0)}$
respectively $\varphi^{(u)}$.

We consider the process 
$(\check{X}_{i,t},(\check{n}_{i,e}(t))_{e\in E_{i}^{\ast}})
_{0\leq t\leq\check{T}_{i}}$ be the inverse process on
$\mathcal{G}_{i}^{\ast}$, with initial field $\varphi^{(u)}_{i}$.
$(X_{i,t})_{t\leq \tau_{i,u}^{x_{0}}}$, conditional on 
$\tau_{i,u}^{x_{0}}$, has the same law as
$(\check{X}_{i,\check{T}_{i}-t})_{t\leq \check{T}_{i}}$.
Taking the limit in law as $i$ tends to infinity, we conclude that
$(X_{t})_{t\leq \tau_{u}^{x_{0}}}$, conditional on 
$\tau_{u}^{x_{0}}<+\infty$, has the same law as
$(\check{X}_{\check{T}-t})_{t\leq \check{T}}$ on the infinite graph
$\mathcal{G}$. The same for the clusters.
In particular,
\begin{multline*}
\mathbb{P}(\check{T}\leq t, \check{X}_{[0,\check{T}]}~\text{stays in}~V_{j})\geq
\lim_{i\to +\infty}
\mathbb{P}(\check{T}_{i}\leq t, \check{X}_{i,[0,\check{T}_{i}]}~\text{stays in}~V_{j})
\\=
\lim_{i\to +\infty}
\mathbb{P}(\tau_{i,u}^{x_{0}}\leq t, X_{i,[0,\tau_{i,u}^{x_{0}}]}~\text{stays in}~V_{j}\vert \tau_{i,u}^{x_{0}}<\zeta_{i})=
\mathbb{P}(\tau_{u}^{x_{0}}\leq t, X_{[0,\tau_{u}^{x_{0}}]}
~\text{stays in}~V_{j}\vert \tau_{u}^{x_{0}} < \zeta),
\end{multline*}
where in the first two probabilities we also average by the values of the
free fields.
Hence
\begin{displaymath}
\mathbb{P}(\check{T}=+\infty~\text{or}~\check{X}_{\check{T}}\neq x_{0})=
1-\lim_{\substack{t\to +\infty\\ j\to +\infty}}
\mathbb{P}(\tau_{u}^{x_{0}}\leq t, X_{[0,\tau_{u}^{x_{0}}]}
~\text{stays in}~V_{j}\vert \tau_{u}^{x_{0}} < \zeta) = 0.
\qedhere
\end{displaymath}
\end{proof}

\begin{remark}
\label{RemInterlacement}
Consider $\mathcal{G}=(V,E)$ an infinite transient electrical network
(with $\kappa\equiv 0$). Proposition \ref{PropInfiniteCase} tells that if the inversions algorithm of Section \ref{sec:inversion} is applied to a Gaussian free field 
$\varphi^{(u)}$ with condition $\sqrt{2u}$ at $x_{0}$, and implicitly $0$ at infinity, the algorithm terminates a.s., that is to say the inverting process
$\check{X}$ does not escape to infinity. However, one could consider a Gaussian free field with positive condition $a>0$ at infinity, 
$\varphi^{(u,a)}$. Such a GFF is related by isomorphism not only to a loop-soup 
$\mathcal{L}_{1/2}$ but also to a Sznitman's random interlacement,
which is a Poisson point process of paths from and to infinity, infinite in both directions of time
\cite{Sznitman2012Isomorphism,Sznitman2012LectureIso,Lupu2014LoopsGFF}. If applied to $\varphi^{(u,a)}$, the algorithm would create a path which has a positive probability to escape to infinity, which would correspond to the event of having an interlacement visiting $x_{0}$.
\end{remark}

\section*{Acknowledgements}

This work was supported by the French National Research Agency (ANR) grant
within the project MALIN (ANR-16-CE93-0003).

This work was partly supported by the LABEX MILYON (ANR-10-LABX-0070) of Université de Lyon, within the program "Investissements d'Avenir" (ANR-11-IDEX-0007) operated by the French National Research Agency (ANR).

TL acknowledges the support of Dr. Max Rössler, the Walter Haefner
Foundation and the ETH Zurich Foundation.

The authors would like to thank the anonymous referee for his comments on the previous version of this paper.

\bibliographystyle{plain}
\bibliography{ray-knight}

\begin{thebibliography}{10}

\bibitem{Aizenman82RCphi4}
M.~Aizenman.
\newblock Geometric analysis of $\varphi^{4}$ fields and {I}sing model.
\newblock {\em Comm. Math. Phys.}, 86(1):1--48, 1982.

\bibitem{BFS82Loop}
D.~Brydges, J.~Fröhlich, and T.~Spencer.
\newblock The random walk representation of classical spin systems and
  correlation inequalities.
\newblock {\em Commun. Math. Phys.}, 83(1):123--150, 1982.

\bibitem{ChungWalsh05MP}
K.L. Chung and J.B. Walsh.
\newblock {\em Markov {P}rocesses, {B}rownian {M}otion, and {T}ime {S}ymmetry},
  volume 249 of {\em Grundlehren der mathematischen {W}issenschaften}.
\newblock 2005.

\bibitem{DC16RC}
H.~Duminil-Copin.
\newblock Random currents expansion of the {I}sing model.
\newblock In {\em 7th {E}uropean {C}ongress of {M}athematicians in {B}erlin},
  2016.
\newblock arXiv:1607.06933.

\bibitem{Dynkin1984Isomorphism}
E.B. Dynkin.
\newblock Gaussian and non-{g}aussian random fields associated with {M}arkov
  processes.
\newblock {\em J. Funct. Anal.}, 55:344--376, 1984.

\bibitem{Dynkin1984IsomorphismPresentation}
E.B. Dynkin.
\newblock Local times and quantum fields.
\newblock In {\em Seminar on Stochastic Processes, Gainesville 1983}, volume~7
  of {\em Progress in Probability and Statistics}, pages 69--84. Birkhauser,
  1984.

\bibitem{Dynkin1984PolynomOccupField}
E.B. Dynkin.
\newblock Polynomials of the occupation field and related random fields.
\newblock {\em J. Funct. Anal.}, 58:20--52, 1984.

\bibitem{EdwardsSokal88Ising}
R.G. Edwards and A.D. Sokal.
\newblock Generalization of the {F}ortuin-{K}asteleyn-{S}wendsen-{W}ang
  representation and {M}onte {C}arlo algorithm.
\newblock {\em Physical Review D}, 38(6):2009–2012, 1988.

\bibitem{ekmrs}
N.~Eisenbaum, H.~Kaspi, M.B. Marcus, J.~Rosen, and Z.~Shi.
\newblock A {R}ay-{K}night theorem for symmetric {M}arkov processes.
\newblock {\em Ann. Probab.}, 28(4):1781--1796, 2000.

\bibitem{FukushimaDirichletForms}
M.~Fukushima, Y.~Oshima, and M.~Takeda.
\newblock {\em Dirichlet Forms and Symmetric {M}arkov Processes}, volume~19 of
  {\em De {G}ruyter Studies in Mathematics}.
\newblock De {G}ruyter, 2nd edition, 2010.

\bibitem{Grim2006FK}
G.R. Grimmett.
\newblock {\em The random-cluster model}, volume 333 of {\em Grundlehren der
  mathematischen {W}issenschaften}.
\newblock Springer, 2006.

\bibitem{LeJan2011Loops}
Y.~Le Jan.
\newblock Markov paths, loops and fields.
\newblock In {\em 2008 St-Flour summer school}, volume 2026 of {\em L.N. Math.}
  Springer, 2011.

\bibitem{Lalwer2018Survey}
G.F. Lawler.
\newblock Topics in loops measures and the loop-erased walk.
\newblock {\em Probab. Surv.}, 15:28--101, 2018.

\bibitem{lejan4}
Y.~Le~Jan.
\newblock Markov loops, free field and {E}ulerian networks.
\newblock {\em J. Math. Soc. Japan}, 67(4):1671--1680, 2015.

\bibitem{Lupu2014LoopsGFF}
T.~Lupu.
\newblock From loop clusters and random interlacements to the free field.
\newblock {\em Ann. Probab.}, 44(3):2117–2146, 2016.

\bibitem{Lupu2015ConvCLE}
T.~Lupu.
\newblock Convergence of the two-dimensional random walk loop soup clusters to
  {C}{L}{E}.
\newblock {\em J. Eur. Math. Soc.}, 2018.
\newblock Available online, DOI: 10.4171/JEMS/859.

\bibitem{LST2018InversionLine}
T.~Lupu, C.~Sabot, and P.~Tarr{\`e}s.
\newblock Inverting {R}ay-{K}night identity on the line.
\newblock In preparation, 2019.

\bibitem{lupu-werner}
T.~Lupu and W.~Werner.
\newblock A note on {I}sing random currents, {I}sing-{FK}, loop-soups and the
  {G}aussian free field.
\newblock {\em Electron. Commun. Probab.}, 21:Paper No. 13, 7, 2016.

\bibitem{MarcusRosen2006MarkovGaussianLocTime}
M.B. Marcus and J.~Rosen.
\newblock {\em Markov processes, {G}aussian processes and local times}, volume
  100 of {\em Cambridge Stud. Adv. Math.}
\newblock Cambridge University Press, 1st edition, 2006.

\bibitem{Pitman2006StFlour}
J.~Pitman.
\newblock Combinatorial stochastic processes.
\newblock In {\em 32nd St-Flour summer school}, volume 1875 of {\em L.N. Math.}
  Springer, 2006.

\bibitem{SabotTarres2015RK}
C.~Sabot and P.~Tarr{\`e}s.
\newblock Inverting {R}ay-{K}night identity.
\newblock {\em Probab. Theory Related Fields}, 165(3):559–580, 2015.

\bibitem{Symanzik1969QFT}
K.~Symanzik.
\newblock Euclidean quantum field theory.
\newblock In {\em Scuola intenazionale di Fisica ”Enrico Fermi”. XLV
  Corso.}, pages 152--223. Academic Press, 1969.

\bibitem{Sznitman2012Isomorphism}
A.S. Sznitman.
\newblock An isomorphism theorem for random interlacements.
\newblock {\em Electron. Commun. Probab.}, 17(9):1--9, Feb. 2012.

\bibitem{Sznitman2012LectureIso}
A.S. Sznitman.
\newblock {\em Topics in occupation times and {G}aussian free field}.
\newblock Zurich lectures in advanced mathematics. European Mathemtical
  Society, 2012.

\bibitem{Werner2015}
W.~Werner.
\newblock On the spatial {M}arkov property of soups of unoriented and oriented
  loops.
\newblock In {\em Séminaire de Probabilités XLVIII}, volume 2168 of {\em L.
  N. Math.}, pages 481--503. Springer, 2016.

\end{thebibliography}
\end{document}